\documentclass[11pt]{article}
\usepackage[letterpaper,margin=1in]{geometry}
\usepackage[T1]{fontenc}
\usepackage{lmodern}
\usepackage{amsmath,amssymb,amsthm,mathtools,microtype,xcolor}
\usepackage{tcolorbox}
\tcbuselibrary{breakable}
\usepackage{hyperref}
\hypersetup{hidelinks,pdftitle={An inverse-linear S3 metric on the Gromoll--Meyer sphere: a detailed argument for review}}
\newtcolorbox{proofgap}[1]{breakable,colback=white,
	colframe=red!75!black,coltitle=red!75!black,colbacktitle=white,
	fonttitle=\bfseries,title={#1},boxrule=1pt,sharp corners,
	left=7pt,right=7pt,top=6pt,bottom=6pt,before skip=12pt,after skip=12pt}
\newtheorem{theorem}{Theorem}[section]
\newtheorem{lemma}[theorem]{Lemma}
\newtheorem{proposition}[theorem]{Proposition}

\newtheorem{remark}[theorem]{Remark}

\newcommand{\Sp}{\operatorname{Sp}}
\newcommand{\Ad}{\operatorname{Ad}}

\newcommand{\diag}{\operatorname{diag}}
\newcommand{\HH}{\mathbb H}
\newcommand{\R}{\mathbb R}

\newcommand{\C}{\mathbb C}
\newcommand{\im}{\operatorname{Im}}
\newcommand{\Rea}{\operatorname{Re}}
\newcommand{\tr}{\operatorname{tr}}
\newcommand{\Span}{\operatorname{span}}
\newcommand{\Id}{\operatorname{Id}}

\numberwithin{equation}{section}
\title{A positively curved metric on the Gromoll--Meyer sphere}
\author{Zexuan Ouyang}
\date{}

\begin{document}
	
	\bibliographystyle{plain}
	
	\maketitle
	
	\begin{abstract}
		We construct an explicit deformation of the bi-invariant
		metric on \(\Sp(2)\) whose induced metrics on the Gromoll--Meyer exotic 7-sphere have positive sectional curvature for all sufficiently small positive
		parameters. The deformation is determined by a fixed self-adjoint
		operator compatible with the quotient action. The proof combines a
		uniform curvature estimate with an algebraic analysis of the horizontal
		commuting planes. On the critical set where the lower-order positive
		terms vanish, we establish an explicit positive lower bound for the
		cubic coefficient of the curvature numerator. Uniform control of the
		fourth-order remainder and a compactness argument then show that every sectional curvature is positive.
	\end{abstract}

	\setcounter{tocdepth}{1}

	\section{Introduction}
	\label{sec:introduction}

	The construction of metrics with positive sectional curvature on exotic
	spheres is a basic problem concerning the interaction between curvature
	and smooth structure. 
	The question of whether there exists a metric with positive curvature on an exotic sphere was posed as the second problem in Yau’s Problem Section \cite{yau-problem}. The Gromoll--Meyer sphere provides a concrete
	setting for this problem. Gromoll and Meyer realized this exotic
	\(7\)-sphere as a biquotient of \(\Sp(2)\) and equipped it with a metric
	of nonnegative sectional curvature \cite{GM}. Wilhelm \cite{Wilh} and
	Eschenburg--Kerin \cite{EK} subsequently constructed almost-positive
	metrics, for which all sectional curvatures are positive on an open
	dense subset.
	
	More generally, Goette, Kerin, and Shankar proved that every exotic
	\(7\)-sphere admits an \(\mathrm{SO}(3)\)-invariant metric of nonnegative
	sectional curvature \cite[Theorems~A--C and Corollary~D]{GKS}. Their
	construction uses a six-parameter family of quotients of
	cohomogeneity-one manifolds, together with a calculation of the
	Eells--Kuiper invariant that identifies the resulting exotic smooth
	structures. For positive sectional curvature on the Gromoll--Meyer
	sphere, Petersen--Wilhelm announced a construction in \cite{PW} and
	developed related deformation principles in \cite{PWprinciples}.
	These works are direct predecessors of the present paper.
	
	In this paper, we study a specific inverse-linear deformation on
	\(\Sp(2)\) and prove positivity of its quotient metrics by explicit
	curvature estimates. The inverse-linear formalism and its curvature
	variation formulas originate in the work of Huizenga--Tapp \cite{HT}.
	Our contribution is the choice of a fixed deformation operator and the
	verification of positivity for that operator, including uniform control
	over all tangent planes. We make no priority claim for the underlying
	methods or for the existence of a positively curved metric on the
	Gromoll--Meyer sphere. The quotient construction and the operator
	formalism are recalled with their attributions in
	Sections~\ref{sec:metric} and~\ref{sec:quotient-metric-framework};
	Section~\ref{sec} specifies the deformation considered here.
	
	To state the result, let \(\Sigma\) be the quotient of \(\Sp(2)\) by
	the free action
	$$
	q\star g=\diag(q,q)g\diag(q,1)^{-1},
	\qquad q\in\Sp(1),\quad g\in\Sp(2).
	$$
	On \(\mathfrak{sp}(2)\), fix the bi-invariant inner product
	$$
	B(U,V)=-\frac12\Rea\tr_{\HH}(UV).
	$$
	Write its elements in the coordinates
	$$
	(r;u_0,u_1,u_2)
	=\begin{pmatrix}
		u_0+u_1&r+u_2\\
		-r+u_2&u_0-u_1
	\end{pmatrix},
	\qquad r\in\R,\quad u_0,u_1,u_2\in\im\HH,
	$$
	and define the \(B\)-self-adjoint operator
	$$
	S_3(r;u_0,u_1,u_2)=(3r;u_1,u_0-u_2,-u_1).
	$$
	For sufficiently small \(\varepsilon\ge0\), the right-invariant metric
	$$
	\widetilde g_{\varepsilon,g}(Ug,Vg)
	=B\bigl((I+\varepsilon S_3)^{-1}U,V\bigr)
	$$
	is invariant under the quotient action and therefore induces a smooth
	metric \(g_\varepsilon\) on \(\Sigma\).
	
	\begin{theorem}
		\label{thm:introduction}
		There exists \(\varepsilon_0>0\) such that the quotient metric
		\(g_\varepsilon\) has positive sectional curvature for every
		\(0<\varepsilon\le\varepsilon_0\).
	\end{theorem}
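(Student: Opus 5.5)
The proof will follow the Huizenga--Tapp inverse-linear framework. Write $\Phi_\varepsilon=(I+\varepsilon S_3)^{-1}$, so that $\widetilde g_\varepsilon=B(\Phi_\varepsilon\cdot,\cdot)$. By O'Neill's formula, the sectional curvature of $g_\varepsilon$ on a horizontal plane is at least the curvature of $\widetilde g_\varepsilon$ on its horizontal lift. It therefore suffices to show that, for every $g\in\Sp(2)$ and every linearly independent pair $X,Y$ horizontal at $g$ for $\widetilde g_\varepsilon$, the unnormalized curvature $\kappa_\varepsilon(X,Y)=\widetilde g_\varepsilon(R(X,Y)Y,X)$ is positive. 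The horizontal space depends on $\varepsilon$. I would reparametrize it by the map $U\mapsto(I+\varepsilon S_3)U$, which carries the $B$-horizontal space $\mathcal H^0_g$ onto $\mathcal H^\varepsilon_g$. Since $S_3$ is $B$-self-adjoint, this is exactly the inverse-linear trick: $\widetilde g_\varepsilon$-orthogonality to the vertical space becomes $B$-orthogonality after applying $\Phi_\varepsilon$. This gives a fixed compact parameter space, namely pairs $(g,X,Y)$ with $X,Y\in\mathcal H^0_g$ $B$-orthonormal. Then $\kappa_\varepsilon$ becomes a function $F(\varepsilon,g,X,Y)$ that is real analytic in $\varepsilon$, and I would expand
\[
F=F_0+\varepsilon F_1+\varepsilon^2F_2+\varepsilon^3F_3+\varepsilon^4 G(\varepsilon,g,X,Y),
\]
with $G$ uniformly bounded on the compact parameter space for $\varepsilon\le\varepsilon_1$. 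This uniform bound comes from smooth dependence and compactness.

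The coefficients are computed from the Huizenga--Tapp variation formulas. The zeroth term is $F_0=\tfrac14|[X,Y]|_B^2\ge0$, which is the bi-invariant (Gromoll--Meyer) curvature. The first variation has the standard form $F_1=\tfrac32 B([X,Y],[S_3X,Y]+[X,S_3Y])$ up to normalization, and one shows it is positive on zero-curvature planes. On the critical set $\mathcal Z$ where $[X,Y]=0$, $F_0$ vanishes to second order. The key algebraic step is to classify the horizontal commuting planes for the Gromoll--Meyer action: up to the symmetries commuting with the action, $X,Y$ lie in a conjugate of the diagonal torus $\diag(\im\HH,\im\HH)$ intersected with $\mathcal H^0_g$. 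On $\mathcal Z$ I would show that $F_1$ and $F_2$ vanish. The design of $S_3$ (the entry $3r$ and the mixing of $u_0,u_1,u_2$) is what forces $F_3$ to be positive there. I would then prove an explicit bound $F_3\ge c\,|X\wedge Y|^2>0$ on $\mathcal Z$ by writing $X,Y$ in the coordinates $(r;u_0,u_1,u_2)$ and reducing to a positive definite quadratic form in the $\Ad_g$-parameters that describe how the horizontal space sits.

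Compactness then finishes the argument. Near $\mathcal Z$, I would use Taylor expansion in the distance $\delta$ to $\mathcal Z$. The positive terms $F_0\gtrsim\delta^2$ and $\varepsilon F_1$ dominate unless $\delta\lesssim\varepsilon$. If $F_1$ has indefinite cross terms of order $\varepsilon\delta$, I would absorb them by the inequality $\varepsilon\delta\le\eta\delta^2+\eta^{-1}\varepsilon^2$ and by the second-order terms. In the region $\delta\lesssim\varepsilon$, the term $\varepsilon^3F_3\ge c\varepsilon^3$ dominates both the cross terms and the remainder $\varepsilon^4 G$, giving $F>0$ for small $\varepsilon$. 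Away from $\mathcal Z$, $F_0\ge c_K>0$ on the compact complement, and $F>0$ for small $\varepsilon$.

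I expect the main obstacle to be the orders of vanishing near $\mathcal Z$. If $F_2$ is negative somewhere on $\mathcal Z$, or the mixed terms $\varepsilon^2\delta$ are not controlled, the cubic term cannot win. The fix I would try first is to split $X,Y$ into a component tangent to the commuting locus and a transverse component $\delta$. I would then show that the relevant quadratic form in $(\delta,\varepsilon)$, with coefficients $F_0''$, $\partial F_1$ and $F_2$, is positive semidefinite, with kernel exactly where $F_3$ takes over. Verifying this explicitly for $S_3$, together with the positive lower bound on $F_3$, is the computational heart of the proof.
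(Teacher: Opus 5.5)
There is a genuine gap, and it sits in your first reduction. You drop O'Neill's term and ask that the total-space numerator $\widetilde g_\varepsilon(R(X,Y)Y,X)$ be positive on every $\widetilde g_\varepsilon$-horizontal plane. That sufficient condition is false for this metric, so your claim $F_3\ge c\,|X\wedge Y|^2>0$ on all horizontal commuting planes cannot be proved. Concretely, let $g\in\Sp(2)$ have first column $\frac1{\sqrt2}(j,1)^t$, and put
\[
x=\frac1{\sqrt2}\begin{pmatrix} i&-k\\ -k&-i\end{pmatrix}=\frac1{\sqrt2}(0;0,i,-k),\qquad
y=\frac1{\sqrt2}\begin{pmatrix} j&-1\\ 1&j\end{pmatrix}=\frac1{\sqrt2}(-1;j,0,0).
\]
These are $B$-orthonormal and satisfy $xy=yx=0$. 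Moreover $D^*x=E_g^*x=0$ and $D^*y=E_g^*y=j/\sqrt2$, so $(g,x,y)\in\mathcal I$ with $c=0$. For $X'=\sqrt2x$ and $Y'=\sqrt2y$ the bracket data are
\[
a=(0;4k,-4i-6k,-6i-4k),\quad b=(0;-2i,-6i+2k,-6k),\quad h=(0;0,-4i+6k,6i-4k),\quad f_{X'}=-f_{Y'}=(0;0,0,2j).
\]
Hence $-B(a,b)=-36$, $\frac34B(S_3a,a)=-108$, $\frac14B(S_3h,h)=24$ and $B(S_3f_{X'},f_{Y'})=0$. This gives $\delta_3(x,y)=-120/4=-30$, while $\zeta(x,y)=2k$. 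Consider the deformed horizontal plane spanned by $Q_\varepsilon xg$ and $Q_\varepsilon yg$. The exact commuting-pair identity gives its total-space numerator as $\varepsilon^3\delta_3-\frac34\varepsilon^4B(P_\varepsilon(b-S_3a),b-S_3a)\le-30\varepsilon^3<0$, for every $0<\varepsilon<\frac12$. On this plane $g_\varepsilon$ is positive only because the vertical bracket contributes $\frac34\varepsilon^2\,\zeta\cdot(J_g^*J_g)^{-1}\zeta>0$ at order $\varepsilon^2$.

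The missing idea is to keep O'Neill's term exactly, as the paper does. It writes $N_\varepsilon=N_\varepsilon^{\rm amb}+\frac34\beta\cdot M^{-1}\beta$ with $\beta=(D+E_g)^*c+\varepsilon(\zeta-J_g^*S_3c)+\varepsilon^2J_g^*P_\varepsilon\mathcal W$. This yields the uniform bound $N_\varepsilon\ge a_0(|c|^2+\varepsilon^2|\zeta|^2)+\varepsilon^3\delta_3-C_0\varepsilon^4$ on the compact frame space. So the cubic sign is needed only on $\mathcal Z=\{c=0,\ \zeta=0\}$, not on all commuting horizontal frames. The extra equation $\zeta=0$ is what drives the algebra:
\begin{itemize}
\item it forces the real coordinates to vanish;
\item it makes the six imaginary components coplanar;
\item it produces the Hermitian matrix $K$ with spectrum $\{-1,0,1\}$ and the identity $\delta_3=4pq+3(4q^2+t_0t_1)$.
\end{itemize}
Together with the moment bound $\frac14\le H\le\frac12$, this gives $\delta_3\ge(\sqrt5-1)/4$. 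The example above shows no such bound survives without $\zeta=0$.

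Two smaller points. First, your first-order term is wrong. The $\frac32B(c,a)$ contributions of the first two P\"uttmann terms cancel, leaving $F_1=\frac34B(S_3c,c)$, which vanishes on commuting pairs and is indefinite. Second, your Taylor argument in the distance to the commuting locus is unnecessary, and it would require $|[X,Y]|^2\gtrsim\operatorname{dist}^2$ on a singular set. The exact $\varepsilon^2$-coefficient of the ambient numerator carries a factor of $c$, so the square $(|c|/4-2K\varepsilon^2)^2\ge0$ absorbs it. Compactness is then applied to $|c|^2+|\zeta|^2$ on the set where $\delta_3\le\frac18$.
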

	
	The main difficulty is to control the curvature uniformly near the
	zero-curvature planes of the undeformed quotient metric. In the
	bi-invariant metric, these planes are represented by horizontal
	commuting pairs in \(\mathfrak{sp}(2)\). Under the deformation, the
	O'Neill term provides a positive quadratic contribution on many such
	pairs. The remaining pairs form a critical set on which positivity must
	be detected at cubic order. Establishing positivity along each fixed
	plane is only part of the argument: nearby planes must also be
	controlled on a common parameter interval.
	
	We address this issue by parametrizing all deformed horizontal planes
	using a fixed compact space \(\mathcal I\) of undeformed horizontal
	\(B\)-orthonormal frames. If \(N_\varepsilon\) denotes the quotient
	curvature numerator, we prove a uniform estimate
	$$
	N_\varepsilon
	\ge a_0\bigl(|[x,y]|^2+\varepsilon^2|\zeta(x,y)|^2\bigr)
	+\varepsilon^3\delta_3(x,y)-C_0\varepsilon^4,
	$$
	where \(a_0,C_0>0\) are independent of the frame. Here, on commuting pairs, \(\zeta\)
	measures the first-order contribution to the vertical bracket, and
	\(\delta_3\) is the cubic coefficient of the ambient curvature on
	commuting pairs; their precise definitions appear in
	Section~\ref{sec:roadmap}. On the critical set
	$$
	\mathcal Z=\{(g,x,y)\in\mathcal I:
	[x,y]=0,\ \zeta(x,y)=0\},
	$$
	we establish the bound
	$$
	\delta_3(x,y)\ge\frac{\sqrt5-1}{4}>0.
	$$
	The algebraic argument begins with a restriction imposed by
	horizontality, then reduces the commuting and critical equations to
	scalar identities from which this bound follows. Compactness separates
	the region where the cubic coefficient is small from \(\mathcal Z\).
	Combining this separation with the uniform estimate proves positivity
	on one parameter interval for every plane. The endpoint is specified
	using a positive compact minimum; no numerical value of that minimum
	is required.
	
	After recalling the quotient geometry and deformation formalism, we
	construct the metric in Section~\ref{sec} and formulate the two main
	estimates in Section~\ref{sec:roadmap}. The uniform curvature estimate
	is proved in Section~\ref{s3:sec:analytic}. The horizontal restriction
	and the algebraic critical inequality are established in
	Sections~\ref{s3:detail:spectral} and~\ref{s3:hand:critical},
	respectively. Section~\ref{rw:completion} combines these ingredients
	and proves Theorem~\ref{thm:introduction}, equivalently
	Theorem~\ref{thm:main}.

	\medskip
	\noindent
	{\bf Acknowledgements.}
	The author is  supported by the National Key R\&D Program of China 2023YFA1009900 and NSFC No. 12401073.

	\medskip
	\noindent
	{\bf AI usage.}
	The main result was proved using the Rethlas agent with the GPT-5.6 and GPT-6 models. The author subsequently simplified the proof and improved the presentation, and is fully responsible for all assertions in this paper. The author also thanks Professors Hanlong Fang and Xin Fu for their help in checking the proof. 
	
	\section{Gromoll--Meyer Sphere} \label{sec:metric}
	
	We first recall the construction and conventions of Gromoll and Meyer \cite{GM}. 
	
	Every quaternion has a unique expression
	$q=a+bi+cj+dk$, where $a,b,c,d\in\R$ and
	\[
	i^2=j^2=k^2=-1,\qquad
	ij=k,\quad jk=i,\quad ki=j,\qquad
	ji=-k,\quad kj=-i,\quad ik=-j.
	\]
	Its conjugate and squared norm are
	\[
	\bar q=a-bi-cj-dk,\qquad
	|q|^2=q\bar q=a^2+b^2+c^2+d^2.
	\]
	We can identify the imaginary quaternions
	$\operatorname{Im}\HH$ with $\R^3$ and define the Euclidean inner product on $\operatorname{Im}\HH$ by
	\[
	\langle u,v\rangle_{\mathrm E}
	:=
	\operatorname{Re}(\bar u v)
	=
	-\operatorname{Re}(uv),
	\qquad u,v\in\operatorname{Im}\HH.
	\]
	Explicitly, for $u=u_1i+u_2j+u_3k$ and
	$v=v_1i+v_2j+v_3k$,
	\begin{equation}\label{EN}
		\langle u,v\rangle_{\mathrm E}=u_1v_1+u_2v_2+u_3v_3.    
	\end{equation}
	We also write $u\cdot v=\langle u,v\rangle_{\mathrm E}$
	and denote by $u\times v$ the Euclidean cross product
	with respect to the orientation determined by $(i,j,k)$.
	Quaternion multiplication then satisfies
	\[
	uv=-u\cdot v+u\times v,
	\qquad u,v\in\operatorname{Im}\HH.
	\]

	Define the compact symplectic group by
	\[
	\Sp(n)\coloneq
	\left\{
	Q\in M_n(\HH):
	QQ^*=Q^*Q=\Id
	\right\},
	\]
	where $Q^*$ denotes quaternionic conjugate transpose.  In particular,
	\[
	\Sp(1)=\{q\in\HH:|q|=1\}=S^3.
	\]
	
	Consider the free action of
	\[
	G_0=S^3\times S^3=\Sp(1)\times\Sp(1)
	\]
	on $\Sp(2)$ defined by
	\begin{equation}
		\label{eq:GM-full-action}
		(q_1,q_2)\cdot Q
		=
		\begin{pmatrix}
			q_1&0\\
			0&q_1
		\end{pmatrix}
		Q
		\begin{pmatrix}
			\bar q_2&0\\
			0&1
		\end{pmatrix}.
	\end{equation}
	The quotient of $\Sp(2)$ by the second factor can be identified with the standard $S^7$:
	\[
	\Sp(2)/(\{1\}\times S^3)\longrightarrow S^7\subset\HH^2,
	\qquad
	\begin{pmatrix}a&b\\c&d\end{pmatrix}
	\longmapsto (b,d).
	\]
	
	\begin{lemma}
		\label{lem:unitary-completions}
		Let $\HH^2$ be regarded as a right quaternionic vector space, equipped
		with the standard quaternionic Hermitian inner product
		\[
		\langle x,y\rangle_{\HH}=x^*y.
		\]
		Fix a unit vector $v\in\HH^2$. Then its orthogonal complement
		\[
		v^\perp:=\{x\in\HH^2:v^*x=0\}
		\]
		has quaternionic dimension one. If $u_0\in v^\perp$ is any fixed unit
		vector, then every unit vector $u\in v^\perp$ has a unique
		representation $u=u_0q$, $q\in\Sp(1)$.
	\end{lemma}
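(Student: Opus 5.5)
The plan is to treat \(v^*x\) as a right-\(\HH\)-linear functional on \(\HH^2\) and read off its kernel, then use the Hermitian structure to normalize. Write \(v=(v_1,v_2)^T\) with \(|v_1|^2+|v_2|^2=1\). The map \(\phi_v\colon\HH^2\to\HH\), \(\phi_v(x)=v^*x=\bar v_1x_1+\bar v_2x_2\), is right \(\HH\)-linear: \(\phi_v(xq)=\phi_v(x)q\). It is surjective, since \(\phi_v(vq)=(v^*v)q=q\). So its kernel \(v^\perp\) is a right \(\HH\)-submodule of \(\HH^2\). Because \(\HH\) is a division ring, right modules over it are free and have a well-defined dimension. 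Rank--nullity then gives \(\dim_\HH v^\perp=2-1=1\).

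As a concrete check, I will exhibit a basis vector: take
\[
w=\begin{pmatrix}-\bar v_2\\ \bar v_1\end{pmatrix}^{\!*}\text{-type vector, e.g. } w=\begin{pmatrix}v_2'\\ v_1'\end{pmatrix}
\]
with entries chosen so that \(\bar v_1w_1+\bar v_2w_2=0\). If \(v_1\neq0\), set \(w_2=1\) and \(w_1=-\bar v_1^{-1}\bar v_2\). If \(v_1=0\), set \(w=(1,0)^T\). Either way \(w\neq0\). Every \(x\in v^\perp\) is then a right multiple \(wq\): if \(v_1\neq0\), then \(x_1=-\bar v_1^{-1}\bar v_2x_2=w_1x_2\), so \(x=wx_2\). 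This direct route avoids citing linear algebra over skew fields and is the only point needing care, because of noncommutativity: scalars must act on the right.

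For the uniqueness statement, let \(u_0\in v^\perp\) be a unit vector. By the dimension count, every \(u\in v^\perp\) can be written \(u=u_0q\) for some \(q\in\HH\). Explicitly, \(q=u_0^*u\), since \(u_0^*u_0q=|u_0|^2q=q\). This also shows \(q\) is unique: \(u_0q=u_0q'\) implies \(q=u_0^*u_0q=u_0^*u_0q'=q'\). Finally, if \(u\) is a unit vector, then
\[
1=u^*u=\bar q\,u_0^*u_0\,q=|q|^2,
\]
so \(q\in\Sp(1)\). Conversely, every \(u_0q\) with \(q\in\Sp(1)\) is a unit vector in \(v^\perp\).

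I expect no real obstacle here. The only step requiring attention is keeping the scalar action on the right consistently, so that \(v^*(xq)=(v^*x)q\) and \(\langle x,yq\rangle_\HH=\langle x,y\rangle_\HH q\) hold.
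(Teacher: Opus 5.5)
Your proposal is correct and is essentially the paper's proof: a dimension count $2-1=1$ (you use rank--nullity for the surjective right-linear functional $x\mapsto v^*x$, where the paper uses the decomposition $\HH^2=v\HH\oplus v^\perp$), followed by $q=u_0^*u$ and $|q|^2=u^*u=1$. One cosmetic fix: delete the aborted first display for $w$. The complex-style vector $(-\bar v_2,\bar v_1)^T$ is not in $v^\perp$ in general, since pairing it with $v$ gives $\bar v_2\bar v_1-\bar v_1\bar v_2$. The case-by-case choice you then give is correct.
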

	
	\begin{proof}
		It is easy to verify the orthogonal decomposition $\HH^2=v\HH\oplus v^\perp$.
		Since $\HH^2$ has quaternionic dimension two and $v\HH$ has
		quaternionic dimension one, we have $\dim_{\HH}v^\perp=1$.
		
		Choose a unit vector $u_0\in v^\perp$. Since $v^\perp$ is a
		one-dimensional right quaternionic vector space, every $u\in v^\perp$
		can be written uniquely as
		$u=u_0q$ for some $q\in\HH$, where $q=u_0^*u$. In particular, 
		$u$ is a unit vector if and only if $|q|=1$.
		
		Write a quaternionic matrix by its columns as $Q=(u\;v)$.
		Since $v$ is already a unit vector, the equation $Q^*Q=I$ is
		equivalent to $u^*u=1$, $v^*u=0$. Thus the matrices in $\Sp(2)$ whose second column is $v$ are precisely $Q_q=(u_0q\;v)$, $q\in\Sp(1)$.
	\end{proof}

	Now consider the diagonal subgroup
	\[
	\Delta \coloneq
	\{(q,q):q\in S^3\}
	\subset S^3\times S^3.
	\]
	Equivalently, the restricted action of $\Delta$ is
	\begin{equation}
		\label{eq:GM-diagonal-action}
		q\star Q \coloneq
		\begin{pmatrix}
			q&0\\
			0&q
		\end{pmatrix}
		Q
		\begin{pmatrix}
			\bar q&0\\
			0&1
		\end{pmatrix}.
	\end{equation}
	The Gromoll--Meyer manifold is the diagonal orbit space
	\begin{equation}
		\label{eq:GM-sphere-definition}
		\Sigma^7=\Sp(2)/\Delta.
	\end{equation}
	The inclusions of the orbit spaces give an $S^3$-bundle
	\begin{equation}
		\label{eq:GM-bundle}
		S^3
		\longrightarrow
		\Sigma^7
		\longrightarrow
		\Sp(2)/(S^3\times S^3)\cong S^4.
	\end{equation}
	Here, the diffeomorphism $\Sp(2)/(S^3\times S^3)\longrightarrow
	S^4\subset\HH\oplus\R$ is explicitly given by
	\begin{equation*}
		\begin{aligned}
			\begin{pmatrix}a&b\\c&d\end{pmatrix}
			&\longmapsto
			\bigl(2\bar b d,\ |b|^2-|d|^2\bigr).
		\end{aligned}
	\end{equation*}
	
	To identify the particular
	exotic sphere structure, we define two charts $h_1,h_2:\R^4\times S^3\rightarrow\Sigma^7$ by
	\begin{equation*}
		h_1(u,q):=
		\left[
		\frac{1}{(1+|u|^2)^{1/2}}
		\begin{pmatrix}
			q&\bar u\\
			-uq&1
		\end{pmatrix}
		\right]_{\Delta}\,, \,\,\,\,\,\,\,\,\,
		h_2(v,r):=
		\left[
		\frac{1}{(1+|v|^2)^{1/2}}
		\begin{pmatrix}
			\bar v r&1\\
			-r&v
		\end{pmatrix}
		\right]_{\Delta}\,,
	\end{equation*}
	where $[Q]_{\Delta}$ denotes the $\Delta$-orbit of $Q$.
	The images of $h_1$ and $h_2$ are the sets of matrices with $d\ne0$ and $b\ne0$, respectively.  On their overlap the transition map is
	\begin{equation*}
		h_2^{-1}h_1(u,q)
		=
		\left(
		u|u|^{-2},
		\left(\frac{u}{|u|}\right)^2
		q
		\left(\frac{u}{|u|}\right)^{-1}
		\right).
	\end{equation*}
	This is precisely Milnor's clutching map of type \((2,-1)\).  Hence $\Sigma^7\cong M_{3}$ in Milnor's one-index notation \cite{Milnor56,GM}. The same smooth manifold has a description as a Brieskorn--Pham link:
	\[
	W_{5,3}^7
	=
	\left\{
	(u,z_0,z_1,z_2,z_3)\in S^9\subset\mathbb C^5:
	u^5+z_0^3+z_1^2+z_2^2+z_3^2=0
	\right\}.
	\]
	Brieskorn proved that this link is a homotopy \(7\)-sphere and that its
	parallelizable Milnor fiber has signature \(8\)
	\cite{Brieskorn1966}.  Equivalently, it represents a generator of
	\(\Theta_7\cong\mathbb Z/28\mathbb Z\).

	\subsection{Gromoll--Meyer metric with nonnegative sectional curvature}

	We regard \(\Sp(2)\) as a matrix group in \(M_2(\HH)\), with Lie algebra
	\[\mathfrak{sp}(2) \coloneq
	\{A\in M_2(\HH):A^*+A=0\}= \left\{
	\begin{pmatrix}
		a&b\\-\bar b&c
	\end{pmatrix}
	:
	a,c\in\operatorname{Im}\HH,\ b\in\HH
	\right\}.
	\]
	Following the notation of \cite{GM}, for \(Q\in \Sp(2)\), let
	\[
	R_Q:\Sp(2)\longrightarrow \Sp(2),
	\qquad R_Q(A)=AQ
	\]
	denote right translation.
	Write its differential at the identity as
	\[R_{Q*}:=(dR_Q)_I:
	\mathfrak{sp}(2)\longrightarrow T_Q\Sp(2).
	\]  Then, the right-invariant vector field $X^R$ determined by
	\(X\in\mathfrak{sp}(2)\) satisfies
	\begin{equation*}
		(X^R)_Q=R_{Q*}X.  
	\end{equation*}
	The corresponding flow is \begin{equation*}
		Q\longmapsto e^{tX}Q.   
	\end{equation*} 
	For
	$X\in\mathfrak{sp}(2)$, the curve $e^{tX}$ has
	initial velocity $X$, so
	\[
	R_{Q*}X
	=
	\left.\frac{d}{dt}\right|_{t=0}R_Q(e^{tX})
	=
	\left.\frac{d}{dt}\right|_{t=0}e^{tX}Q
	=
	XQ.
	\]
	Here the expression \(XQ\) is the ordinary product of quaternionic matrices representing tangent vectors. Note that with the vector-field bracket convention $[\mathcal X,\mathcal Y]f=\mathcal X(\mathcal Yf)-\mathcal Y(\mathcal Xf)$, one obtains
	\begin{equation}\label{rlie} 
		[X^R,Y^R]=-[X,Y]^R.
	\end{equation}

	Throughout, we use the right trivialization
	\begin{equation}\label{rightt}
		T_QG\longrightarrow\mathfrak{sp}(2),
		\qquad V\longmapsto VQ^{-1},
	\end{equation}
	which is the inverse of $R_{Q*}$. Thus every
	$V\in T_QG$ can be written uniquely as
	\[
	V=R_{Q*}X=XQ,
	\qquad X=VQ^{-1}\in\mathfrak{sp}(2).
	\]
	The left-trivialized coordinate of the same vector
	is $Q^{-1}V$.

	For computational convenience, we introduce
	\[
	m:=
	\begin{pmatrix}0&1\\-1&0\end{pmatrix},
	\,\,\,\,\,
	U_0(u):=
	\begin{pmatrix}u&0\\0&u\end{pmatrix},
	\,\,\,\,\,
	U_1(u):=
	\begin{pmatrix}u&0\\0&-u\end{pmatrix},
	\,\,\,\,\,
	U_2(u):=
	\begin{pmatrix}0&u\\u&0\end{pmatrix},\,\,\,u\in\operatorname{Im}\mathbb H.
	\]
	Then, each \(Z\in\mathfrak{sp}(2)\) has a unique expression
	\begin{equation}
		\label{eq:multiplicity-coordinates} Z:=rm+\sum\limits_{a=0}^2U_a(u_a) =
		\begin{pmatrix}
			u_0+u_1&r+u_2\\
			-r+u_2&u_0-u_1
		\end{pmatrix}=:(r;u_0,u_1,u_2),\,\,\,r\in\mathbb R, \,\,\,u_0,u_1,u_2\in\operatorname{Im}\mathbb H.
	\end{equation}
	We call $(r;u_0,u_1,u_2)$ the multiplicity coordinates of $Z$.
	For \(Z=(r;u_0,u_1,u_2)\) and
	\(W=(s;v_0,v_1,v_2)\), writing $[Z,W]=2(\rho;w_0,w_1,w_2)$, we have
	\begin{equation}
		\label{eq:multiplicity-bracket}
		\begin{aligned}
			\rho&=u_2\cdot v_1-u_1\cdot v_2,\,\,\,\,\,\,\,\,\,\,\,\,\,\,\,\,\,\,\,\,\,\,\,\,\,\,\,\,\,\,\,\,\,\,\,\,\,\,\,\, w_0=u_0\times v_0+u_1\times v_1+u_2\times v_2,\\
			w_1&=u_0\times v_1+u_1\times v_0+rv_2-su_2,\,\,
			w_2=u_0\times v_2+u_2\times v_0+su_1-rv_1.
		\end{aligned}
	\end{equation}

	Taking the trace on \(\mathfrak{sp}(2)\) gives the negative-definite Killing form
	\begin{equation*}
		\kappa(X,Y)=\operatorname{tr}_{\mathbb R} \bigl(\operatorname{ad}_X\circ\operatorname{ad}_Y\bigr), 
	\end{equation*} where $\operatorname{ad}_X:\mathfrak{sp}(2)\longrightarrow\mathfrak{sp}(2)$, $\operatorname{ad}_X(Y)=[X,Y]$ is the adjoint representation of the Lie algebra. For the quaternionic matrix realization of \(\mathfrak{sp}(2)\), computation yields that $\kappa(X,Y)=12\operatorname{Re}\operatorname{tr}_{\HH}(XY)$. We choose the normalization
	\begin{equation}
		\label{BN}
		B(X,Y):=-\frac1{24}\kappa(X,Y)=-\frac12\operatorname{Re}\operatorname{tr}_{\HH}(XY),
	\end{equation}
	so that, in the coordinates \eqref{eq:multiplicity-coordinates}, \begin{equation}\label{bp}
		B(Z,W)=rs+u_0\cdot v_0+u_1\cdot v_1+u_2\cdot v_2,  
	\end{equation}
	and the elements $\{m, U_a(i), U_a(j),U_a(k),a=0,1,2\}$ form a \(B\)-orthonormal basis. 
	We define a bi-invariant metric $\widetilde g_0$ on \(\Sp(2)\) such that
	\begin{equation}
		\label{eq:GM-ambient-metric}
		(\widetilde g_{0})_{Q}(XQ,YQ):=B(X,Y).
	\end{equation}
	
	For a Riemannian manifold \((M,g)\) with the Levi--Civita connection \(\nabla\), we use
	the curvature convention
	\begin{equation}\label{curvconv}
		{\mathcal R}(\mathcal X,\mathcal Y)\mathcal Z \coloneq \nabla_{\mathcal X}\nabla_{\mathcal Y}\mathcal Z
		- \nabla_{\mathcal Y}\nabla_{\mathcal X}\mathcal Z - \nabla_{[\mathcal X,\mathcal Y]}\mathcal Z.    
	\end{equation}
	For linearly independent vectors \(v,w\in T_pM\), the
	sectional curvature of the plane they span is \begin{equation}
		\label{eq:sectional-curvature}
		\sec(v\wedge w)
		=
		\frac{g_p(\mathcal R(v,w)w,v)}
		{g_p(v,v)g_p(w,w)-(g_p(v,w))^2}=:\frac{\mathcal N^{g}(v,w)}{g_p(v,v)g_p(w,w)-(g_p(v,w))^2}.
	\end{equation}
	Then, the curvature formula for a bi-invariant metric
	\cite[p.~224]{Lee} yields that
	\begin{equation}
		\label{eq:bi-invariant-curvature}
		\left(\widetilde g_{0})_{Q}(\mathcal R(XQ,YQ)(YQ),
		XQ
		\right)= \frac14\|[X,Y]\|_B^2,
	\end{equation}
	where $\|Z\|_B^2=B(Z,Z)$.
	It follows from \eqref{eq:sectional-curvature} and
	\eqref{eq:bi-invariant-curvature} that \(\widetilde g_0\)
	has nonnegative sectional curvature.  Moreover, for linearly independent \(X,Y\in\mathfrak g\), the ambient two-plane
	spanned by \(XQ\) and \(YQ\) has zero curvature if and only if $[X,Y]=0$.
	
	Now we define the Gromoll--Meyer quotient metric $g_0$ as follows. Notice that the quotient map
	$$\pi:\Sp(2)\longrightarrow\Sigma^7$$
	is a smooth surjective submersion. Following \cite{GM}, define the vertical and horizontal spaces by
	\begin{equation}\label{verh}
		\Delta_Q^{\top}
		:=\ker(d\pi)_Q
		=T_Q\bigl(\pi^{-1}(\pi(Q))\bigr),
		\,\,\,\,\,\,\,\,
		\Delta_Q^{\perp}
		:=(\Delta_Q^{\top})^{\perp},\,\,\,\,\,\,\forall Q\in \Sp(2). 
	\end{equation}
	The restriction
	\[
	(d\pi)_Q:\Delta_Q^{\perp}
	\longrightarrow T_{\pi(Q)}\Sigma^7
	\]
	is a linear isomorphism.
	For \(\xi,\eta\in T_{\pi(Q)}\Sigma^7\),
	let \(\widetilde\xi,\widetilde\eta\in \Delta_Q^\perp\) be their unique horizontal lifts at \(Q\). Define the metric $g_0$ on the Gromoll--Meyer sphere $\Sigma^7$ by
	\[(g_{0})_{\pi(Q)}(\xi,\eta)=(\widetilde g_{0})_{Q}(\widetilde\xi,\widetilde\eta).
	\]

	It is clear that
	$\pi:(\Sp(2),\widetilde g_0)
	\longrightarrow(\Sigma^7,g_0)$
	is a Riemannian submersion. For a tangent vector \(Z\in T_Q\Sp(2)\), write
	\[
	Z=Z^{\top}+Z^{\perp},
	\qquad
	Z^{\top}\in\Delta_Q^{\top},
	\qquad
	Z^{\perp}\in\Delta_Q^{\perp},
	\]
	for its orthogonal decomposition into vertical and horizontal components.  For local orthonormal vector fields
	\({\xi},\eta\) on \(\Sigma^7\), let
	\(\widetilde{\xi},\widetilde{\eta}\) denote their horizontal lifts. Then, O'Neill's curvature formula \cite{ONeill} (see
	\cite[p.~403, Eq.~(1)]{GM} as well) yields
	\begin{equation}
		\label{eq:oneill-general}
		\sec_{g_0}(\xi,\eta)\circ\pi
		=
		\sec_{\widetilde g_0}\bigl(\widetilde \xi \wedge\widetilde \eta\bigr)
		+
		\frac34
		\bigl\|\big[\widetilde \xi,\widetilde \eta\big]^{\top}
		\bigr\|_{\widetilde g_0}^{\,2}.
	\end{equation}

	Now let \(p\in\Sigma^7\) and
	\(\sigma\subset T_p\Sigma^7\) be an arbitrary
	two-dimensional subspace. Choose a \(g_0\)-orthonormal
	basis \(\xi,\eta\) of \(\sigma\), and extend it to local
	orthonormal vector fields on \(\Sigma^7\).
	Let \(\mathcal X,\mathcal Y\) denote their horizontal
	lifts. Choose any \(Q\in\pi^{-1}(p)\). 
	Applying O'Neill's formula \eqref{eq:oneill-general},
	we obtain
	\begin{equation*}
		\sec_{g_0,p}(\sigma)=
		\sec_{\widetilde g_0,Q}
		\bigl(\mathcal X_Q\wedge\mathcal Y_Q\bigr)
		+
		\frac34
		\bigl\|[\mathcal X,\mathcal Y]^{\top}_Q
		\bigr\|_{\widetilde g_0}^{\,2}\ge0,
	\end{equation*}
	where the first term is nonnegative by
	\eqref{eq:bi-invariant-curvature}.

	\section{Cheeger deformation and its abstract generalization}
	
	The original Gromoll--Meyer metric has nonnegative sectional curvature,
	but some tangent planes have zero curvature. A natural question is whether one can change the relative lengths
	of different directions while retaining nonnegative curvature.
	
	A compact group of isometries singles out the tangent spaces to its orbits as distinguished directions. Cheeger developed a way to change the metric so that vectors in these directions become shorter while preserving nonnegative sectional curvature \cite{Cheeger}. Its essential idea is to realize the new metric as a quotient of a Riemannian product whose curvature is already known to be nonnegative. O'Neill's formula then controls the curvature of the new metric. We describe the construction on a compact Lie group, following \cite{HT,EK}.

	Let \(K\subset G=\Sp(2)\) be a closed subgroup with Lie algebra
	\(\mathfrak k\).
	For the left \(K\)-action, the orbit through \(Q\) is \(KQ\).
	Differentiating its orbit curves yields that
	\[
	T_Q(KQ)=\{AQ:A\in\mathfrak k\}.
	\]
	For the right \(K\)-action, the corresponding formulas are
	\[
	T_Q(QK)=\{QA:A\in\mathfrak k\}.
	\]
	In our right trivialization, these two orbit tangent spaces
	therefore correspond to
	\[
	T_Q(KQ)\longleftrightarrow\mathfrak k,
	\qquad
	T_Q(QK)\longleftrightarrow\Ad_Q\mathfrak k.
	\]
	
	Define the \(B\)-orthogonal complement of \(\mathfrak k\) by
	\[
	\mathfrak k^\perp
	:=
	\{Z\in\mathfrak g:
	B(Z,A)=0\text{ for every }A\in\mathfrak k\}.
	\]
	Since \(B\) is positive definite, we have the orthogonal
	direct-sum decomposition
	\[
	\mathfrak g=\mathfrak k\oplus\mathfrak k^\perp.
	\]
	Thus every \(X\in\mathfrak g\) can be written uniquely as
	\[
	X=X_{\mathfrak k}+X_{\mathfrak k^\perp},
	\qquad
	X_{\mathfrak k}\in\mathfrak k,
	\qquad
	X_{\mathfrak k^\perp}\in\mathfrak k^\perp.
	\]
	The \(B\)-orthogonal projection onto \(\mathfrak k\) is
	the linear map
	\[
	\Pi_{\mathfrak k}:\mathfrak g\longrightarrow\mathfrak k,
	\qquad
	\Pi_{\mathfrak k}X=X_{\mathfrak k}.
	\]
	Since right translation is an isometry for the original
	bi-invariant metric \(\widetilde g_0\),
	\[
	XQ=X_{\mathfrak k}Q+X_{\mathfrak k^\perp}Q
	\]
	is the orthogonal decomposition into directions tangent
	and perpendicular to the left \(K\)-orbit.

	For \(t>0\), equip \(K\times G\) with the product metric
	\[
	t^{-1}B|_{\mathfrak k}\oplus B.
	\]
	Consider the submersion
	\[
	F:K\times G\longrightarrow G,
	\qquad F(k,Q)=kQ.
	\]
	The fibers of \(F\) are the orbits of the free isometric
	action
	\[
	h\cdot(k,Q)=(kh^{-1},hQ),
	\qquad h\in K.
	\]
	Thus \(F\) defines a quotient metric \(\widetilde g_t\)
	on the same underlying manifold \(G\). Write
	\[
	X=X_{\mathfrak k}+X_{\mathfrak k^\perp}.
	\]
	Computation yields that
	\begin{equation}
		\label{eq:cheeger-scaling}
		(\widetilde g_t)_e(X,X)
		=
		\frac{1}{1+t}\|X_{\mathfrak k}\|_B^2
		+
		\|X_{\mathfrak k^\perp}\|_B^2.
	\end{equation}
	The squared lengths in the orbit directions are multiplied
	by \(1/(1+t)\), while the orthogonal directions are
	unchanged.
	
	Right translations on the \(G\)-factor descend to
	isometries of the quotient. Thus \(\widetilde g_t\) is
	right-invariant and can be written as
	\begin{equation*} (\widetilde g_t)_Q(XQ,YQ)=B(P_tX,Y),
	\end{equation*}
	where
	\begin{equation}\label{Poper}
		P_t:=(I+t\Pi_{\mathfrak k})^{-1}=\frac{1}{1+t}\Pi_{\mathfrak k}+(I-\Pi_{\mathfrak k}).    
	\end{equation}
	The family extends smoothly to \(t=0\), where \(P_0=I\). Then, O'Neill's formula implies $\sec_{\widetilde g_t}\ge0$.
	
	\begin{remark}
		The construction also explains how some initially flat planes can acquire positive curvature. For linearly independent \(x,y\in\mathfrak g\), consider
		the moving plane
		\[\sigma_t=\operatorname{span}\{P_t^{-1}x,P_t^{-1}y\}.
		\]
		The horizontal lifts of its displayed generators at
		\((e,e)\in K\times G\) are $(t x_{\mathfrak k},x)$, $(t y_{\mathfrak k},y)$. If $[x,y]=0$, $[x_{\mathfrak k},y_{\mathfrak k}]\ne0$, then the original plane is flat, but the \(K\)-components of these lifts contribute strictly positive curvature
		in the product. O'Neill's formula therefore gives $\sec_{\widetilde g_t}(\sigma_t)>0$ $(t>0)$. A precise zero-curvature criterion for this deformation
		is given in \cite[Example~3.3]{HT}.    
	\end{remark}
	
	\begin{remark}\label{LG}    
		Eschenburg and Kerin use the nested subgroups
		\[
		H=\{\diag(q,q):q\in\Sp(1)\}
		\subset
		K=\Sp(1)\times\Sp(1)
		\subset
		G=\Sp(2).
		\]
		Let \(\mathfrak h\subset\mathfrak k\subset\mathfrak g\)
		be the corresponding Lie algebras. In our convention the
		two successive contractions have inverse operator
		\begin{equation}
			\label{eq:EK-inverse-operator}
			P_{a,b}^{-1}
			=
			I+a\Pi_{\mathfrak k}+b\Pi_{\mathfrak h},
			\qquad a,b>0.
		\end{equation}
		These are explicit nonnegatively curved metrics obtained by successive Riemannian quotients. The resulting quotient metrics have positive sectional curvature at every point outside an explicitly described
		exceptional set on the Gromoll--Meyer sphere \cite{EK}.

		Wilhelm uses contractions on both sides of \(G\), which can be expressed in our notation by a point-dependent
		operator \(\mathcal P_Q\) satisfying
		\[
		\mathcal P_Q^{-1}
		=
		I+a\Pi_{\mathfrak k}
		+b\Ad_Q\Pi_{\mathfrak k}\Ad_{Q^{-1}}.
		\]
		Wilhelm \cite{Wilh} obtains almost positive curvature for suitable
		parameters. Petersen and Wilhelm \cite{PW,PWprinciples} introduce further changes depending on the point of the manifold.
	\end{remark}

	\subsection{Right-invariant metrics induced by \texorpdfstring{$P$}{dd} operators}
	\label{sec:quotient-metric-framework}
	
	The preceding section constructed Cheeger deformations of \(G=\Sp(2)\) as Riemannian submersion quotients. We now describe, via the operator formalism of \cite{ZK}, a broader class of right-invariant metrics compatible with the Gromoll--Meyer action, from which our candidate metrics will be drawn. Within this class, we then specialize to an inverse-linear family and compute its sectional curvature. Observe that O'Neill's formula no longer guarantees nonnegative sectional curvature in this setting. For a general operator \(P\), the ambient space in Cheeger's construction need not have nonnegative curvature, unlike the product-space ambient spaces arising from Cheeger deformations (see (\ref{Poper} and Remark \ref{LG} for instance).

	Throughout, the underlying Gromoll--Meyer quotient map $\pi:G\rightarrow\Sigma^7$ remains fixed.


	Let $P:\mathfrak {sp}(2)\to\mathfrak {sp}(2)$ be positive definite
	and $B$-self-adjoint.
	Define a metric $\widetilde g_P$ on $\Sp(2)$ by
	\begin{equation}
		\label{eq:general-inertia-metric}
		(\widetilde g_P)_Q(XQ,YQ):=B(PX,Y),
		\qquad X,Y\in\mathfrak {sp}(2).
	\end{equation}
	Recall the Gromoll--Meyer action
	\begin{equation}\label{GMA}
		q\star Q=D(q)QA(q)^{-1},
		\qquad
		D(q)=\diag(q,q),\qquad A(q)=\diag(q,1).    
	\end{equation}
	The action is isometric precisely when 
	\begin{equation}\label{PD}
		P\Ad_{D(q)}=\Ad_{D(q)}P,\,\,\,\,\forall q\in\Sp(1).
	\end{equation}
	For such an operator $P$, the quotient construction defines a smooth metric $g_P$ for which
	$\pi:(G,\widetilde g_P)\rightarrow(\Sigma^7,g_P)$
	is a Riemannian submersion.

	In the coordinates of \eqref{eq:multiplicity-coordinates},
	conjugation by $D(q)$ acts as
	\[
	(r;u_0,u_1,u_2)
	\longmapsto
	(r;qu_0\bar q,qu_1\bar q,qu_2\bar q).
	\]
	It follows that every operator satisfying \eqref{PD} has the form
	\begin{equation}\label{pp}
		P(r;u_0,u_1,u_2)
		=
		\left(
		cr;\,
		\sum_{j=0}^2 C_{0j}u_j,\,
		\sum_{j=0}^2 C_{1j}u_j,\,
		\sum_{j=0}^2 C_{2j}u_j
		\right),
	\end{equation}
	where $c>0$, $C:=(C_{ij})_{0\leq i,j\leq 2}\in M_3(\mathbb R)$ is symmetric and positive definite.
	By (\ref{bp}), we then have
	\[
	B(PZ,Z)
	=
	cr^2+\sum_{i=0}^2C_{ii}|u_i|^2
	+2\sum_{0\le i<j\le2}C_{ij}\,u_i\cdot u_j.
	\]
	
	We now describe the vertical and horizontal spaces of the
	Riemannian submersion $\pi$.
	By (\ref{verh}), the vertical space at $Q$ is the tangent space to the $\Sp(1)$-orbit through $Q$, which is spanned by 
	\begin{equation*}
		\begin{split}
			\left.\frac{d}{dt}\right|_{t=0}(e^{ta}\star Q)
			&=\left.\frac{d}{dt}\right|_{t=0}(\diag(e^{ta},e^{ta})\,Q\,\diag(e^{-ta},1))= \diag(a,a)Q-Q\,\diag(a,0),\,\,    \forall a\in\operatorname{Im}\HH.
		\end{split}    
	\end{equation*}
	Then,
	\begin{equation}
		\label{eq:GM-vertical-space}
		\Delta_Q^\top
		=
		\{J_Q(a)Q:a\in\operatorname{Im}\HH\},
	\end{equation}
	where $J_Q:\operatorname{Im}\HH\rightarrow\mathfrak {sp}(2)$ is the linear map defined by
	\begin{equation}\label{JQ}
		J_Q(a) = \diag(a,a)-Q\,\diag(a,0)\,Q^{-1}.    
	\end{equation}
	
	By definition, the horizontal space $\Delta_{Q}^{\perp}$  consists of all tangent vectors
	$XQ$  satisfying
	\[
	0
	=(\widetilde g_{P})_{Q}\bigl(XQ,J_Q(a)Q\bigr)
	=
	B(PX,J_Q(a)),
	\,\,\,
	\forall a\in\operatorname{Im}\HH.
	\]
	Denote by
	\begin{equation}
		J_Q^*:\mathfrak{sp}(2)\longrightarrow\operatorname{Im}\HH    
	\end{equation}
	the adjoint of $J_Q$ with respect to the inner
	product \eqref{EN} on $\operatorname{Im}\HH$ and the
	inner product \eqref{BN} on $\mathfrak{sp}(2)$, that is, for all $a\in\operatorname{Im}\HH$ and $X\in\mathfrak{sp}(2)$.
	\[
	\langle a,J_Q^*X\rangle_{\mathrm E}
	=
	B(J_Q(a),X)
	\]
	Equivalently,
	\[
	\operatorname{Re}\bigl(\bar a\,J_Q^*X\bigr)
	=
	-\frac12\operatorname{Re}
	\operatorname{tr}_{\HH}\bigl(J_Q(a)X\bigr).
	\]
	With $(e_1,e_2,e_3)=(i,j,k)$, in the multiplicity coordinates
	\[
	X=(r;u_0,u_1,u_2),\qquad
	J_Q(e_\alpha)
	=(r_\alpha;v_{0\alpha},v_{1\alpha},v_{2\alpha}),
	\]
	we have
	\[J_Q^*X= \sum_{\alpha=1}^3 B(J_Q(e_\alpha),X)e_\alpha = \sum_{\alpha=1}^3 \left(r_\alpha r+
	\sum_{j=0}^2
	\operatorname{Re}(\overline{v_{j\alpha}}\,u_j)
	\right)e_\alpha.
	\]
	Consequently, the horizontal condition is
	\begin{equation}
		\label{eq:horizontal-test}
		XQ\in\Delta_Q^\perp
		\quad\Longleftrightarrow\quad
		J_Q^*PX=0\quad\Longleftrightarrow\quad
		c\,r_\alpha r+
		\sum_{i,j=0}^2
		C_{ij}\,v_{i\alpha}\cdot u_j=0,
		\,\,\,\alpha=1,2,3.
	\end{equation}
	
	Let $M_{P,Q}$ be the Gram matrix of the vertical basis
	$J_Q(i)Q,J_Q(j)Q,J_Q(k)Q$ with respect to
	$(\widetilde g_{P})_{Q}$. Equivalently, $M_{P,Q}$ represents $J_Q^*PJ_Q$
	in the basis $(i,j,k)$ of $\operatorname{Im}\HH$. More explicitly,
	\begin{equation}
		\label{eq:deformed-vertical-Gram}
		M_{P,Q}=
		\left(
		c\,r_\alpha r_\beta+
		\sum_{i,j=0}^2
		C_{ij}\,v_{i\alpha}\cdot v_{j\beta}
		\right)_{\alpha,\beta=1}^3.
	\end{equation}
	It is clear that $M_{P,Q}$ is positive definite and invertible.
	
	Motivated by the Cheeger operator \eqref{Poper}, we consider
	an inverse-linear family in the sense of Huizenga--Tapp
	\cite{HT}. Let
	$S:\mathfrak{sp}(2)\to\mathfrak{sp}(2)$ be a
	$B$-self-adjoint operator satisfying
	\begin{equation}\label{SD}
		S\Ad_{D(q)}=\Ad_{D(q)}S,\,\,\,\,\forall q\in\Sp(1).
	\end{equation}
	For $0<\varepsilon\ll1$, the operator
	$I+\varepsilon S$ is positive definite. Then, we define the right-invariant metric $\widetilde g_\varepsilon$ on $\Sp(2)$ induced by
	\begin{equation}\label{eq:framework-inverse-linear}
		P_\varepsilon:=(I+\varepsilon S)^{-1}.
	\end{equation}
	Let $g_\varepsilon$ denote the induced metric on the Gromoll--Meyer sphere.

	According to \eqref{pp}, we can conclude that
	\[
	S(r;u_0,u_1,u_2)
	=
	\left(
	\ell r;\,
	\sum_{j=0}^2T_{0j}u_j,\,
	\sum_{j=0}^2T_{1j}u_j,\,
	\sum_{j=0}^2T_{2j}u_j
	\right),
	\]
	where $\ell\in\mathbb R$ and $T:=(T_{ij})_{0\leq i,j\leq 2}\in M_3(\mathbb R)$ is symmetric such that
	\begin{equation}\label{range}
		1+\varepsilon\ell>0,
		\qquad I_3+\varepsilon T>0.    
	\end{equation}
	The particular operator $S$ will be specified in the construction.
	
	We next express the horizontal spaces of the deformed metrics
	in terms of the horizontal space of the original metric.
	Fix $Q\in \Sp(2)$ and let $\Delta_{\varepsilon,Q}^{\perp}$ denote the orthogonal
	complement of the vertical space $\Delta_Q^\top$ with
	respect to $(\widetilde g_\varepsilon)_Q$. 
	Setting $x=P_\varepsilon X$, we obtain by \eqref{eq:horizontal-test} that
	\[
	J_Q^*x=0,
	\,\,\,\, X=P_\varepsilon^{-1}x=(I+\varepsilon S)x,
	\]
	and hence
	\begin{equation}
		\label{eq:inverse-linear-horizontal-spaces}
		\Delta_{\varepsilon,Q}^{\perp}
		=
		\left\{
		((I+\varepsilon S)x)Q:
		x\in\ker J_Q^*
		\right\}.
	\end{equation}
	Thus the map
	\[
	xQ\longmapsto ((I+\varepsilon S)x)Q
	\]
	is a linear isomorphism from the original horizontal space
	$\Delta_{Q}^{\perp}=\Delta_{0,Q}^{\perp}$ to the deformed horizontal space
	$\Delta_{\varepsilon,Q}^{\perp}$.
	For the curvature calculations below, choose linearly
	independent $x,y\in\ker J_Q^*$ and set
	\begin{equation}
		\label{eq:moving-horizontal-generators}
		X_\varepsilon:=(I+\varepsilon S)x,
		\qquad
		Y_\varepsilon:=(I+\varepsilon S)y.
	\end{equation}
	Then $X_\varepsilon Q$, $Y_\varepsilon Q$ span a
	horizontal two-plane for $\widetilde g_\varepsilon$.
	For each fixed $\varepsilon$, every horizontal two-plane
	arises in this way, for $P_\varepsilon$ is invertible. 

	\subsection{The curvature formulas for the inverse-linear family}
	\label{subsec:ambient-curvature-framework}
	
	We use the curvature convention fixed in (\ref{curvconv}) and (\ref{eq:sectional-curvature}), with a superscript
	on $\mathcal R$ indicating the metric. Throughout this subsection,
	assume that $I+\varepsilon S$ is positive-definite.
	By P\"uttmann's curvature formula \cite{Puttmann} (see \cite{HT} as well), we have
	\begin{lemma}\label{lem:puttmann-curvature}
		For $U,V\in\mathfrak{sp}(2)$, define
		\[
		F(U,V)
		:=
		\frac12\bigl([U,P_{\varepsilon}V]+[V,P_{\varepsilon}U]\bigr).
		\]
		Then, the ambient curvature numerator given by (\ref{eq:sectional-curvature}) satisfies
		\begin{equation}
			\label{eq:puttmann-curvature-general}
			\begin{aligned}
				\mathcal N^{\widetilde g_{\varepsilon}}(U,V)
				={}&
				\frac12B([P_{\varepsilon}U,V]+[U,P_{\varepsilon}V],[U,V])
				-\frac34B(P_{\varepsilon}[U,V],[U,V])\\
				&+B\bigl(F(U,V),P_{\varepsilon}^{-1}F(U,V)\bigr)-B\bigl(F(U,U),P_{\varepsilon}^{-1}F(V,V)\bigr).
			\end{aligned}
		\end{equation}
		In particular, choose, $U=X_\varepsilon$ and $V=Y_\varepsilon$, we have
		\begin{equation}
			\label{eq:invariant-curvature}
			\begin{aligned}
				\mathcal N^{\widetilde g_\varepsilon}
				(X_\varepsilon,Y_\varepsilon)
				={}&
				\frac12B(2z+\varepsilon a_S,Z_\varepsilon)
				-\frac34B(P_\varepsilon Z_\varepsilon,Z_\varepsilon)+\frac{\varepsilon^2}{4}
				B\bigl(h_S,(I+\varepsilon S)h_S\bigr)\\
				&-\varepsilon^2
				B\bigl([Sx,x],(I+\varepsilon S)[Sy,y]\bigr),
			\end{aligned}
		\end{equation}
	\end{lemma}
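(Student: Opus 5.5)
The general identity \eqref{eq:puttmann-curvature-general} is P\"uttmann's formula for a one-sided invariant metric $B(P_\varepsilon\cdot,\cdot)$ on a compact Lie group. I will take it from \cite{Puttmann} (see also \cite{HT}) rather than rederive it. Two adjustments are needed. The first is for the right trivialization and the sign $[X^R,Y^R]=-[X,Y]^R$ from \eqref{rlie}. The second is for the curvature convention \eqref{curvconv}.

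If a check is wanted, I would proceed as follows. Use the Koszul formula on right-invariant fields to write $\nabla_{X^R}Y^R=\bigl(\tfrac12[X,Y]+P_\varepsilon^{-1}F(X,Y)\bigr)^R$, possibly up to an overall sign fixed by \eqref{rlie}. Then expand $\mathcal N=\widetilde g_\varepsilon(\mathcal R(X^R,Y^R)Y^R,X^R)$ and simplify using the $\Ad$-invariance of $B$ and the self-adjointness of $P_\varepsilon$. The only delicate point is this bookkeeping of signs. A sanity check is $P_\varepsilon=I$, where $F(U,V)=0$ and $F(U,U)=0$ and the formula must reduce to $\tfrac12\|[U,V]\|^2-\tfrac34\|[U,V]\|^2$ with the correct sign. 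Comparing with \eqref{eq:bi-invariant-curvature} fixes the convention.

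The substantive content is the specialization \eqref{eq:invariant-curvature}, which is purely algebraic. Set $U=X_\varepsilon=(I+\varepsilon S)x$ and $V=Y_\varepsilon=(I+\varepsilon S)y$ as in \eqref{eq:moving-horizontal-generators}. The key observation is
\[
P_\varepsilon X_\varepsilon=x,\qquad P_\varepsilon Y_\varepsilon=y.
\]
Write $z=[x,y]$ and $Z_\varepsilon=[X_\varepsilon,Y_\varepsilon]$, and define
\[
a_S:=[Sx,y]+[x,Sy],\qquad h_S:=[Sx,y]-[x,Sy]=[Sx,y]+[Sy,x].
\]
With these, I compute the four terms in order.

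First, $[P_\varepsilon U,V]+[U,P_\varepsilon V]=[x,y+\varepsilon Sy]+[x+\varepsilon Sx,y]=2z+\varepsilon a_S$. This gives the first term $\tfrac12B(2z+\varepsilon a_S,Z_\varepsilon)$.

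Second, the term $-\tfrac34B(P_\varepsilon Z_\varepsilon,Z_\varepsilon)$ is unchanged.

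Third, $F(U,V)=\tfrac12\bigl([X_\varepsilon,y]+[Y_\varepsilon,x]\bigr)=\tfrac12\bigl(z+\varepsilon[Sx,y]+[y,x]+\varepsilon[Sy,x]\bigr)=\tfrac{\varepsilon}{2}h_S$. Since $P_\varepsilon^{-1}=I+\varepsilon S$, the third term is $\tfrac{\varepsilon^2}{4}B(h_S,(I+\varepsilon S)h_S)$.

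Fourth, $F(U,U)=[X_\varepsilon,x]=[x,x]+\varepsilon[Sx,x]=\varepsilon[Sx,x]$, and likewise $F(V,V)=\varepsilon[Sy,y]$. The last term is therefore $-\varepsilon^2B([Sx,x],(I+\varepsilon S)[Sy,y])$.

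Collecting the four terms gives \eqref{eq:invariant-curvature}. I expect the main obstacle to be matching sign conventions with P\"uttmann's original statement. The specialization itself is routine once $P_\varepsilon(I+\varepsilon S)=I$ is used.
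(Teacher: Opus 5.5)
Your specialization to $U=X_\varepsilon$, $V=Y_\varepsilon$ is correct and coincides with the computation the paper carries out right after its proof. The gap is the general identity \eqref{eq:puttmann-curvature-general}, which is the real content of the lemma. You say two convention adjustments are needed but never carry them out, and the check you offer in their place fails on two counts. First, it is miscomputed. At $P_\varepsilon=I$ the first term is $\tfrac12B(2[U,V],[U,V])=\|[U,V]\|_B^2$, so the formula gives $\tfrac14\|[U,V]\|_B^2$, in agreement with \eqref{eq:bi-invariant-curvature}. Your value $\tfrac12-\tfrac34=-\tfrac14$ would falsely signal a global sign error and lead you to ``correct'' a formula that is already right. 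Second, even done correctly, the test is blind to the last two terms, since $F\equiv0$ when $P_\varepsilon=I$. It therefore cannot confirm their signs or the placement of $P_\varepsilon^{-1}$.

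To close the gap, follow the paper. Koszul's formula with constant inner products, $[U^R,V^R]=-[U,V]^R$ from \eqref{rlie}, $\Ad$-invariance of $B$ and self-adjointness of $P_\varepsilon$ give $\nabla_{U^R}V^R=-\bigl(\tfrac12[U,V]+P_\varepsilon^{-1}F(U,V)\bigr)^R$. These covariant derivatives are again right-invariant, so metric compatibility yields, with $W=[U,V]$,
\[
\mathcal N^{\widetilde g_\varepsilon}(U,V)=-\langle\nabla_{V^R}V^R,\nabla_{U^R}U^R\rangle+\langle\nabla_{U^R}V^R,\nabla_{V^R}U^R\rangle+\langle\nabla_{W^R}V^R,U^R\rangle .
\]
The three terms are, respectively:
\begin{itemize}
\item $-B(F(U,U),P_\varepsilon^{-1}F(V,V))$;
\item $B(F(U,V),P_\varepsilon^{-1}F(U,V))-\tfrac14B(P_\varepsilon W,W)$, where the mixed terms cancel by self-adjointness;
\item $\tfrac12B([P_\varepsilon U,V]+[U,P_\varepsilon V],W)-\tfrac12B(P_\varepsilon W,W)$.
\end{itemize}
They sum to \eqref{eq:puttmann-curvature-general}.

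If you prefer to rely on the citation, justify it properly. Inversion $g\mapsto g^{-1}$ is an isometry from the right-invariant metric $B(P_\varepsilon\cdot,\cdot)$ onto the left-invariant metric with the same operator. It sends the right-trivialized vector $U$ to the left-trivialized vector $-U$. The sectional-curvature numerator is even in each argument and does not depend on the sign convention for $\mathcal R$. Hence a formula proved for left-invariant metrics transfers verbatim, and no adjustment is needed at all.
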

	
	\begin{proof}
		Let $\nabla$ be the Levi--Civita connection of $\widetilde g_{\varepsilon}$. Throughout the proof, $\langle\cdot,\cdot\rangle$ denotes the inner product with respect to $\widetilde g_{\varepsilon}$ at a fixed point $Q$.
		Since the inner products of right-invariant vector fields are constant,
		we conclude by (\ref{rlie}) and the Koszul formula that
		\[
		\begin{aligned}
			2\langle\nabla_{U^R}V^R,W^R\rangle
			={}&
			-B(P_{\varepsilon}[U,V],W)
			+B(P_{\varepsilon}[V,W],U)
			-B(P_{\varepsilon}[W,U],V)\\
			={}&
			B\bigl(-P_{\varepsilon}[U,V]+[P_{\varepsilon}U,V]-[U,P_{\varepsilon}V],W\bigr).
		\end{aligned}
		\]
		The second equality follows from the invariance of $B$ and the $B$-self-adjointness of $P_\varepsilon$.
		Thus
		\begin{equation}
			\label{eq:puttmann-right-connection}
			\nabla_{U^R}V^R =
			\left(
			-\frac12[U,V]-P_{\varepsilon}^{-1}F(U,V)
			\right)^R.
		\end{equation}
		
		Setting $W:=[U,V]$, we obtain by (\ref{curvconv}) and (\ref{eq:sectional-curvature}) that
		\begin{equation*}
			\begin{split}
				\mathcal N^{\widetilde g_{\varepsilon}}(U,V)
				&=
				\left\langle
				\nabla_{U^R}\nabla_{V^R}V^R,U^R
				\right\rangle
				-
				\left\langle
				\nabla_{V^R}\nabla_{U^R}V^R,U^R
				\right\rangle+
				\left\langle\nabla_{W^R}V^R,U^R\right\rangle\\
				&= -\left\langle
				\nabla_{V^R}V^R,\nabla_{U^R}U^R
				\right\rangle+\left\langle
				\nabla_{U^R}V^R,\nabla_{V^R}U^R
				\right\rangle
				+\left\langle\nabla_{W^R}V^R,U^R\right\rangle.
			\end{split}
		\end{equation*}
		We evaluate these three terms separately.
		By \eqref{eq:puttmann-right-connection},
		\[
		-\left\langle
		\nabla_{V^R}V^R,\nabla_{U^R}U^R
		\right\rangle
		=-B\bigl(F(U,U),P_{\varepsilon}^{-1}F(V,V)\bigr).
		\]
		Since $F(V,U)=F(U,V)$, the same connection formula
		gives
		\[
		\begin{aligned}
			\left\langle
			\nabla_{U^R}V^R,\nabla_{V^R}U^R
			\right\rangle
			&=
			B\left(
			P_{\varepsilon}\left(-\frac12W-P^{-1}F(U,V)\right),
			\frac12W-P_{\varepsilon}^{-1}F(U,V)
			\right)\\
			&= -\frac14B(P_{\varepsilon}[U,V],[U,V]) +B\bigl(F(U,V),P_{\varepsilon}^{-1}F(U,V)\bigr).
		\end{aligned}
		\]
		Finally, applying the Koszul formula once more yields
		\[
		\begin{aligned}
			2\left\langle\nabla_{W^R}V^R,U^R\right\rangle
			={}&
			-B(P_{\varepsilon}[W,V],U)
			+B(P_{\varepsilon}[V,U],W)
			-B(P_{\varepsilon}[U,W],V)\\
			={}&
			B([P_{\varepsilon}U,V]+[U,P_{\varepsilon}V],[U,V])-B(P_{\varepsilon}[U,V],[U,V]).
		\end{aligned}
		\]
		Adding these three expressions proves
		\eqref{eq:puttmann-curvature-general}.
	\end{proof}

	Take $x,y\in\mathfrak{sp}(2)$ and, as in \cite[Section~4]{HT}, set
	\begin{equation}
		\label{eq:inverse-linear-bracket-data}
		\begin{split}
			&z:=[x,y],\qquad
			a_S:=[Sx,y]+[x,Sy],\qquad
			b_S:=[Sx,Sy], \qquad h_S:=[Sx,y]-[x,Sy].
		\end{split} 
	\end{equation}
	Let \(X_\varepsilon=(I+\varepsilon S)x\), \(Y_\varepsilon=(I+\varepsilon S)y\)
	be given by
	\eqref{eq:moving-horizontal-generators}, and set
	\[
	Z_\varepsilon
	:=[X_\varepsilon,Y_\varepsilon]=z+\varepsilon a_S+\varepsilon^2b_S.
	\]
	Since $P_\varepsilon X_\varepsilon=x$, $P_\varepsilon Y_\varepsilon=y$, we have
	\begin{equation*}
		\begin{split}
			&[P_\varepsilon X_\varepsilon,Y_\varepsilon]   +[X_\varepsilon,P_\varepsilon Y_\varepsilon]=[x,y+\varepsilon Sy]+[x+\varepsilon Sx,y]
			=2z+\varepsilon a_S,\\
			&F(X_\varepsilon,Y_\varepsilon)=\frac12\bigl([x+\varepsilon Sx,y]-[x,y+\varepsilon Sy]\bigr)=\frac{\varepsilon}{2}\bigl([Sx,y]-[x,Sy]\bigr)
			=\frac{\varepsilon}{2}h_S,\\
			&F(X_\varepsilon,X_\varepsilon)=[X_\varepsilon,P_\varepsilon X_\varepsilon] =\varepsilon [Sx,x],\,\,F(Y_\varepsilon,Y_\varepsilon)=[Y_\varepsilon,P_\varepsilon Y_\varepsilon]
			=\varepsilon [Sy,y].   
		\end{split}    
	\end{equation*}
	Substituting these identities into
	\eqref{eq:puttmann-curvature-general}, we derive \eqref{eq:invariant-curvature}.

	We now pass to the quotient. Fix $Q\in G=\Sp(2)$ and
	linearly independent $x,y\in\ker J_Q^*$. Keep $\varepsilon$
	fixed throughout the following calculation, with
	$I+\varepsilon S$ positive definite, and let
	$X_\varepsilon,Y_\varepsilon$ be as in
	\eqref{eq:moving-horizontal-generators}. By
	\eqref{eq:inverse-linear-horizontal-spaces}, the vectors
	$X_\varepsilon Q,Y_\varepsilon Q$ are horizontal for
	$\widetilde g_\varepsilon$.
	Write 
	\[
	\xi_\varepsilon:=(d\pi)_Q(X_\varepsilon Q),
	\qquad
	\eta_\varepsilon:=(d\pi)_Q(Y_\varepsilon Q).
	\]
	The restriction
	\[
	(d\pi)_Q:
	\Delta_{\varepsilon,Q}^{\perp}
	\longrightarrow T_{\pi(Q)}\Sigma^7
	\]
	is a linear isometry. In particular,
	$\xi_\varepsilon,\eta_\varepsilon$ are linearly independent
	and have the same Gram matrix as their horizontal lifts.
	
	Extend $\xi_\varepsilon,\eta_\varepsilon$ to smooth local
	vector fields on $\Sigma^7$, and denote their horizontal lifts
	by $\mathcal X,\mathcal Y$. Thus
	\[
	\mathcal X_Q=X_\varepsilon Q,
	\qquad
	\mathcal Y_Q=Y_\varepsilon Q.
	\]
	In the following, the superscript $\top$ denotes
	orthogonal projection onto the vertical space
	$\Delta_Q^\top$ with respect to
	$(\widetilde g_\varepsilon)_Q$. 
	Using the curvature-numerator notation of
	\eqref{eq:sectional-curvature}, O'Neill's formula
	\eqref{eq:oneill-general} for the deformed submersion becomes
	\begin{equation}
		\label{eq:oneill}
		\mathcal N^{g_\varepsilon}
		(\xi_\varepsilon,\eta_\varepsilon)
		=
		\mathcal N^{\widetilde g_\varepsilon}
		(X_\varepsilon,Y_\varepsilon)
		+\frac34
		\bigl\|[\mathcal X,\mathcal Y]^\top_Q
		\bigr\|_{\widetilde g_\varepsilon}^{\,2}.
	\end{equation}
	No orthonormality assumption is needed in
	\eqref{eq:oneill}. Indeed, applying the orthonormal formula after Gram--Schmidt and multiplying by the common Gram determinant gives this identity.

	To evaluate the last term in \eqref{eq:oneill}, we use the biquotient framework of \cite{ZK} and give the calculation explicitly in our right-invariant convention.
	Identify the Gram matrix in \eqref{eq:deformed-vertical-Gram} with its operator on
	$\operatorname{Im}\HH$ in the orthonormal basis $(i,j,k)$, and write
	\[
	M_{\varepsilon,Q}
	:=
	M_{P_\varepsilon,Q}
	=
	J_Q^*P_\varepsilon J_Q.
	\]

	Define an $\operatorname{Im}\HH$-valued one-form $\lambda_\varepsilon$ on $\Sp(2)$ by
	\[ (\lambda_\varepsilon)_{Q'}(V) := J_{Q'}^*P_\varepsilon\bigl(V(Q')^{-1}\bigr), \qquad
	Q'\in \Sp(2),\,\, V\in T_{Q'}\Sp(2).
	\]
	By the definition of $J_{Q'}^*$, for each $a\in\operatorname{Im}\HH$,
	\[ \langle a,(\lambda_\varepsilon)_{Q'}(V)\rangle_{\mathrm E}=
	B\bigl(J_{Q'}(a),
	P_\varepsilon(V(Q')^{-1})\bigr)=
	(\widetilde g_\varepsilon)_{Q'}
	\bigl(J_{Q'}(a)Q',V\bigr).
	\]
	In particular, by \eqref{eq:GM-vertical-space},
	\[
	\ker(\lambda_\varepsilon)_{Q'}
	=
	\Delta_{\varepsilon,Q'}^\perp.
	\]

	Set
	\[
	\beta_{\varepsilon,Q}(x,y)
	:=
	(d\lambda_\varepsilon)_Q
	(X_\varepsilon Q,Y_\varepsilon Q)=(d\lambda_\varepsilon^1)_Q(X_\varepsilon Q,Y_\varepsilon Q)\,i +(d\lambda_\varepsilon^2)_Q(X_\varepsilon Q,Y_\varepsilon Q)\,j+(d\lambda_\varepsilon^3)_Q(X_\varepsilon Q,Y_\varepsilon Q)\,k.
	\]
	For any local vector fields
	$\mathcal U,\mathcal V$,  Cartan's formula gives
	\begin{equation}\label{Cart}
		d\lambda_\varepsilon(\mathcal U,\mathcal V)
		= \mathcal U\bigl(\lambda_\varepsilon(\mathcal V)\bigr)
		-
		\mathcal V\bigl(\lambda_\varepsilon(\mathcal U)\bigr)-
		\lambda_\varepsilon([\mathcal U,\mathcal V]).    
	\end{equation}
	For the horizontal lifts $\mathcal X,\mathcal Y$, $\lambda_\varepsilon(\mathcal X) = \lambda_\varepsilon(\mathcal Y) = 0$. Consequently,
	\[ \beta_{\varepsilon,Q}(x,y)=
	-(\lambda_\varepsilon)_Q
	([\mathcal X,\mathcal Y]_Q)=
	-(\lambda_\varepsilon)_Q
	([\mathcal X,\mathcal Y]^\top_Q).
	\]
	The second equality uses the fact that
	$\lambda_\varepsilon$ annihilates the horizontal component.
	
	Write
	\[
	[\mathcal X,\mathcal Y]^\top_Q=J_Q(c)Q
	\]
	for the unique $c\in\operatorname{Im}\HH$. Since
	\[
	(\lambda_\varepsilon)_Q(J_Q(c)Q)
	=
	J_Q^*P_\varepsilon J_Q(c)
	=
	M_{\varepsilon,Q}c,
	\]
	we obtain
	\[
	[\mathcal X,\mathcal Y]^\top_Q
	=
	-J_Q\bigl(
	M_{\varepsilon,Q}^{-1}
	\beta_{\varepsilon,Q}(x,y)
	\bigr)Q.
	\]
	Moreover,
	\[ \|J_Q(c)Q\|_{\widetilde g_\varepsilon}^2=
	B(P_\varepsilon J_Q(c),J_Q(c))=
	\langle c,M_{\varepsilon,Q}c\rangle_{\mathrm E}.
	\]
	Substituting
	$c=-M_{\varepsilon,Q}^{-1}\beta_{\varepsilon,Q}(x,y)$
	proves
	\begin{equation}
		\label{eq:oneill-matrix-term}
		\bigl\|[\mathcal X,\mathcal Y]^\top_Q
		\bigr\|_{\widetilde g_\varepsilon}^{\,2}
		=
		\left\langle
		\beta_{\varepsilon,Q}(x,y),
		M_{\varepsilon,Q}^{-1}\beta_{\varepsilon,Q}(x,y)
		\right\rangle_{\mathrm E}.
	\end{equation}
	
	It remains to evaluate $\beta_{\varepsilon,Q}(x,y)$. For $a\in \operatorname{Im}\HH$, define $D(a)=\diag(a,a)$.
	In multiplicity coordinates, 
	\[
	B(D(a),(r;u_0,u_1,u_2))
	=
	\langle a,u_0\rangle_{\mathrm E}.
	\]
	Therefore its adjoint with respect to
	$\langle\cdot,\cdot\rangle_{\mathrm E}$ and $B$ satisfies
	\[
	D^*(r;u_0,u_1,u_2)=u_0.
	\]
	For fixed $U\in\mathfrak{sp}(2)$ and
	$a\in\operatorname{Im}\HH$, (\ref{JQ}) yields
	\[
	J_{e^{sU}Q}(a)
	=
	D(a)
	-
	e^{sU}\bigl(Q\diag(a,0)Q^{-1}\bigr)e^{-sU}.
	\]
	Differentiating at $s=0$, we obtain
	\[ \left.\frac{d}{ds}\right|_{s=0}
	J_{e^{sU}Q}(a)=
	-[U,Q\diag(a,0)Q^{-1}]=
	[U,J_Q(a)-D(a)].
	\]
	
	Although the identity
	\[
	d\lambda_\varepsilon(\mathcal X,\mathcal Y)
	=
	-\lambda_\varepsilon([\mathcal X,\mathcal Y])
	\]
	uses horizontal extensions, $d\lambda_\varepsilon$ itself is a two-form. Its value at $Q$ therefore depends only on the two tangent vectors at $Q$. We may consequently compute
	it using the right-invariant fields
	$X_\varepsilon^R,Y_\varepsilon^R$. 
	To compute componentwise, fix $a\in\operatorname{Im}\HH$
	and pair $\lambda_\varepsilon$ with $a$. Fixing $X_\varepsilon,Y_\varepsilon,x,y$ as Lie-algebra elements and varing $Q'$, we have
	\[
	\begin{aligned}
		\bigl\langle a,
		\lambda_\varepsilon(Y_\varepsilon^R)
		\bigr\rangle_{\mathrm E}(Q')
		&=
		B(J_{Q'}(a),P_\varepsilon Y_\varepsilon)
		=
		B(J_{Q'}(a),y),\\
		\bigl\langle a,
		\lambda_\varepsilon(X_\varepsilon^R)
		\bigr\rangle_{\mathrm E}(Q')
		&=
		B(J_{Q'}(a),P_\varepsilon X_\varepsilon)
		=
		B(J_{Q'}(a),x).
	\end{aligned}
	\]
	Differentiating
	along the corresponding flows therefore yields
	\[
	\begin{aligned}
		X_\varepsilon^R
		\bigl\langle a,
		\lambda_\varepsilon(Y_\varepsilon^R)
		\bigr\rangle_{\mathrm E}\big|_Q
		&=
		B([X_\varepsilon,J_Q(a)-D(a)],y),\\
		Y_\varepsilon^R
		\bigl\langle a,
		\lambda_\varepsilon(X_\varepsilon^R)
		\bigr\rangle_{\mathrm E}\big|_Q
		&=
		B([Y_\varepsilon,J_Q(a)-D(a)],x).
	\end{aligned}
	\]
	Recall also that
	$Z_\varepsilon=[X_\varepsilon,Y_\varepsilon]$.
	By our right-invariant bracket convention \eqref{rlie},
	\[
	[X_\varepsilon^R,Y_\varepsilon^R]_Q
	=
	-Z_\varepsilon Q.
	\]
	The minus sign in the exterior-derivative formula thus gives
	\[
	-\bigl\langle a,
	(\lambda_\varepsilon)_Q
	([X_\varepsilon^R,Y_\varepsilon^R]_Q)
	\bigr\rangle_{\mathrm E}
	=
	B(J_Q(a),P_\varepsilon Z_\varepsilon).
	\]
	Combining these three terms, we find by (\ref{Cart}) that
	\[
	\begin{aligned}
		\langle a,\beta_{\varepsilon,Q}(x,y)\rangle_{\mathrm E}
		&=B([X_\varepsilon,J_Q(a)-D(a)],y)-
		B([Y_\varepsilon,J_Q(a)-D(a)],x)+
		B(J_Q(a),P_\varepsilon Z_\varepsilon)\\
		&= B(D(a),2z+\varepsilon a_S)+
		B\bigl(
		J_Q(a),
		P_\varepsilon Z_\varepsilon-2z-\varepsilon a_S
		\bigr).\\
	\end{aligned}
	\]
	Since this holds for every $a\in\operatorname{Im}\HH$,
	the definitions of $D^*$ and $J_Q^*$ give the exact identity
	\begin{equation}
		\label{eq:beta-exact-ped}
		\beta_{\varepsilon,Q}(x,y)
		= D^*(2z+\varepsilon a_S)+
		J_Q^*\bigl(
		P_\varepsilon Z_\varepsilon-2z-\varepsilon a_S
		\bigr).
	\end{equation}
	Finally, substituting \eqref{eq:oneill-matrix-term} into
	\eqref{eq:oneill} yields 
	\begin{equation}
		\label{eq:quotient-curvature-ped}
		\mathcal N^{g_\varepsilon}
		(\xi_\varepsilon,\eta_\varepsilon)
		=\mathcal N^{\widetilde g_\varepsilon}
		(X_\varepsilon,Y_\varepsilon)+
		\frac34
		\left\langle
		\beta_{\varepsilon,Q}(x,y),
		M_{\varepsilon,Q}^{-1}\beta_{\varepsilon,Q}(x,y)
		\right\rangle_{\mathrm E}.
	\end{equation}
	
	\begin{remark}[Deforming the original zero-curvature planes]
		Fix $Q\in G$ and linearly independent $x,y\in\ker J_Q^*$
		with $[x,y]=0$. Let $X_\varepsilon,Y_\varepsilon$ and
		$\xi_\varepsilon,\eta_\varepsilon$ be the corresponding
		moving horizontal generators and their projections.
		Using the notation of \eqref{eq:inverse-linear-bracket-data},
		set
		\[
		w_S:=b_S-Sa_S,
		\qquad
		\zeta_S(x,y):=D^*a_S,
		\]
		and define
		\begin{equation}
			\label{eq:HT-cubic-coefficient}
			\begin{aligned}
				\delta_S(x,y):={}&
				-B(a_S,b_S)
				+\frac34 B(Sa_S,a_S)
				+\frac14 B(Sh_S,h_S)\\
				&-B\bigl(S[Sx,x],[Sy,y]\bigr).
			\end{aligned}
		\end{equation}
		For $I+\varepsilon S>0$, Huizenga--Tapp's exact expansion
		\cite[Proposition~3.1]{HT}, with $\Psi=-S$, gives
		\begin{equation}
			\label{eq:HT-commuting-expansion}
			\mathcal N^{\widetilde g_\varepsilon}
			(X_\varepsilon,Y_\varepsilon)
			=
			\varepsilon^3\delta_S(x,y)
			-\frac34\varepsilon^4 B(P_\varepsilon w_S,w_S).
		\end{equation}
		In particular, the ambient terms of orders zero, one,
		and two vanish.
		
		Since
		\[
		P_\varepsilon Z_\varepsilon
		=\varepsilon a_S+\varepsilon^2P_\varepsilon w_S,
		\]
		the exact quotient formula \eqref{eq:beta-exact-ped} becomes
		\begin{equation}
			\label{eq:HT-quotient-beta}
			\beta_{\varepsilon,Q}(x,y)
			=
			\varepsilon\zeta_S(x,y)
			+\varepsilon^2J_Q^*P_\varepsilon w_S.
		\end{equation}
		Thus the quotient curvature can already have a positive
		quadratic term. More precisely, if $\zeta_S(x,y)\ne0$,
		then \eqref{eq:quotient-curvature-ped} yields
		\[
		\mathcal N^{g_\varepsilon}
		(\xi_\varepsilon,\eta_\varepsilon)
		=
		\frac34\varepsilon^2
		\left\langle
		\zeta_S(x,y),(J_Q^*J_Q)^{-1}\zeta_S(x,y)
		\right\rangle_{\mathrm E}
		+O(\varepsilon^3).
		\]
		Its quadratic coefficient is strictly positive, so this
		fixed moving plane has positive curvature for sufficiently
		small $\varepsilon>0$.
		
		If $\zeta_S(x,y)=0$, let
		\[
		w^{\mathrm h}_{S,\varepsilon,Q}
		:=
		w_S-J_QM_{\varepsilon,Q}^{-1}J_Q^*P_\varepsilon w_S.
		\]
		Then
		$w^{\mathrm h}_{S,\varepsilon,Q}Q=(w_SQ)^\perp$,
		where the projection is taken with respect to
		$\widetilde g_\varepsilon$.
		The O'Neill term cancels the vertical part of the
		fourth-order ambient remainder, giving the exact identity
		\begin{equation}
			\label{eq:critical-horizontal-remainder}
			\begin{aligned}
				\mathcal N^{g_\varepsilon}
				(\xi_\varepsilon,\eta_\varepsilon)
				={}&\varepsilon^3\delta_S(x,y)\\
				&-\frac34\varepsilon^4
				B\bigl(P_\varepsilon w^{\mathrm h}_{S,\varepsilon,Q},
				w^{\mathrm h}_{S,\varepsilon,Q}\bigr).
			\end{aligned}
		\end{equation}
		This explains why the subsequent analysis must examine
		$\delta_S$ on the critical pairs satisfying
		\[
		J_Q^*x=J_Q^*y=0,
		\qquad [x,y]=0,
		\qquad D^*a_S=0.
		\]
		Positivity along each fixed original zero-curvature plane
		does not by itself establish positive curvature for one
		common interval of parameters. That conclusion also requires
		uniform control of nearby planes, including noncommuting
		pairs, using \eqref{eq:invariant-curvature} and
		\eqref{eq:quotient-curvature-ped}.
	\end{remark}
	
	\section{Construction of the candidate metric}
	\label{sec}

	Use the decomposition
	\[
	\mathfrak h=U_0(\operatorname{Im}\HH),\qquad
	\mathfrak k\ominus\mathfrak h=U_1(\operatorname{Im}\HH),\qquad
	\mathfrak k^{\perp_B}=\R m\oplus U_2(\operatorname{Im}\HH).
	\]
	The constant positive operators compatible with the quotient symmetry are
	\[
	P=c\,\operatorname{Id}_{\R m}\oplus(Q\otimes I_3),
	\qquad c>0,\quad Q=Q^t>0.
	\]

	For a symmetric $3\times3$ matrix $T$ and a real number $\ell$, put
	\begin{equation}\label{eq:general-S-choice}
		S_{\ell,T}=\ell\,\operatorname{Id}_{\R m}\oplus(T\otimes I_3),
	\end{equation}
	and set
	\[
	h_t(X,Y)=B((I+tS_{\ell,T})^{-1}X,Y),
	\qquad P_t=(I+tS_{\ell,T})^{-1}.
	\]
	The horizontal spaces are
	\[
	\mathcal H_{t,g}=(I+tS_{\ell,T})\ker J_g^*.
	\]
	
	Choose
	\[
	T=\begin{pmatrix}0&1&0\\1&0&-1\\0&-1&0\end{pmatrix},
	\qquad \ell=3,
	\]
	and write $S_3=S_{3,T}$.  Thus
	\begin{equation}\label{eq:candidate-S}
		\boxed{S_3(r;u_0,u_1,u_2)=(3r;u_1,u_0-u_2,-u_1).}
	\end{equation}
	Equivalently, for imaginary $u,v,w$ and real $r$,
	\[
	S_3\begin{pmatrix}u&r+v\\-r+v&w\end{pmatrix}
	=\begin{pmatrix}u-v&3r+(w-u)/2\\-3r+(w-u)/2&v-w\end{pmatrix}.
	\]
	Moreover,
	\[
	T^2=\begin{pmatrix}1&0&-1\\0&2&0\\-1&0&1\end{pmatrix},
	\qquad T^3=2T,
	\qquad \det(\lambda I-T)=\lambda(\lambda^2-2).
	\]
	
	For $g\in G$ and tangent vectors $V,W\in T_gG$, we have $Vg^{-1}, W g^{-1}\in  \mathfrak{sp}(2)$. Define
	\begin{equation}\label{eq:candidate-metric}
		\boxed{\begin{gathered}
				P_\varepsilon=(I+\varepsilon S_3)^{-1},\qquad
				\widetilde g_{\varepsilon,g}(V,W)
				=B(P_\varepsilon (Vg^{-1}),Wg^{-1}),\\
				g_\varepsilon=\widetilde g_\varepsilon\big/\Sp(1).
		\end{gathered}}
	\end{equation}
	
	The metric is right-$\Sp(2)$-invariant and invariant under $g\mapsto diag(q,q)g$. Hence the Gromoll–Meyer action is isometric and the metric descends to
	$\Sigma^7_{\rm GM}$.  The inertia operator is
	\[
	P_\varepsilon m=(1+3\varepsilon)^{-1}m,
	\]
	\[
	P_\varepsilon|_{U_0\oplus U_1\oplus U_2}
	=\left(I-\frac{\varepsilon T}{1-2\varepsilon^2}
	+\frac{\varepsilon^2T^2}{1-2\varepsilon^2}\right)\otimes I_3.
	\]
	The eigenvalues of $S_3$ are $3,0,\sqrt2,-\sqrt2$ with multiplicities
	$1,3,3,3$.  Hence $P_\varepsilon$ is positive for
	$0<\varepsilon<1/2$.
	
	\begin{theorem}[The fixed $S_3$ conclusion]\label{thm:main}
		For the operator \eqref{eq:candidate-S}, there exists
		$\varepsilon_0>0$ such that the quotient metric
		\eqref{eq:candidate-metric} has $\sec(g_\varepsilon)>0$ whenever
		$0<\varepsilon\le\varepsilon_0$.
	\end{theorem}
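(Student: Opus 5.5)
The plan is to parametrize every deformed horizontal plane by a fixed compact set and then prove one estimate that holds uniformly on it.

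Let $\mathcal I$ be the set of triples $(g,x,y)$ with $g\in\Sp(2)$, $x,y\in\ker J_g^*$ and $(x,y)$ $B$-orthonormal. Since $\ker J_g^*$ has constant dimension $7$ and depends smoothly on $g$, $\mathcal I$ is compact. By \eqref{eq:inverse-linear-horizontal-spaces}, every horizontal plane of $\widetilde g_\varepsilon$ at $g$ is spanned by $X_\varepsilon g,Y_\varepsilon g$ for some $(g,x,y)\in\mathcal I$. The Gram determinant of $X_\varepsilon,Y_\varepsilon$ is uniformly positive for $\varepsilon\le 1/4$. It therefore suffices to show that $N_\varepsilon(g,x,y):=\mathcal N^{g_\varepsilon}(\xi_\varepsilon,\eta_\varepsilon)>0$ on $\mathcal I$ for all $0<\varepsilon\le\varepsilon_0$.

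\textbf{Step 1: a uniform lower bound.} I would expand \eqref{eq:quotient-curvature-ped}, using \eqref{eq:invariant-curvature} and \eqref{eq:beta-exact-ped}, as a polynomial in $z=[x,y]$ and $\varepsilon$ with coefficients that are smooth and bounded on $\mathcal I$.
\begin{itemize}
\item The ambient part equals $\tfrac14|z|^2+O(\varepsilon|z|)+\varepsilon^3\delta_3-C\varepsilon^4$. For the middle coefficient I would write $\delta_3$ for the expression \eqref{eq:HT-cubic-coefficient} evaluated at arbitrary $(x,y)$, not only at commuting pairs. Its difference from the true cubic coefficient should then be $O(|z|)$, which follows by comparing with the exact expansion \eqref{eq:HT-commuting-expansion}.
\item The O'Neill part is $\tfrac34\langle\beta,M^{-1}\beta\rangle$, where $\beta=2D^*z+\varepsilon\zeta+O(\varepsilon|z|+\varepsilon^2)$ and $M^{-1}\ge m_0>0$ uniformly.
\end{itemize}
Absorbing the cross terms with Young's inequality should give
\[
N_\varepsilon\ge a_0\bigl(|z|^2+\varepsilon^2|\zeta|^2\bigr)+\varepsilon^3\delta_3-C_0\varepsilon^4 .
\]
There is a delicate point. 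A term such as $\varepsilon^2\langle\zeta,w\rangle$ can be absorbed into $\varepsilon^2|\zeta|^2$ only at the cost of an $\varepsilon^4$ remainder. The $\varepsilon^3$-level interaction of $\zeta$ with the quadratic O'Neill coefficient must be handled the same way.

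\textbf{Step 2: the algebraic core.} On the critical set $\mathcal Z=\{z=0,\ \zeta=0\}$ I must show $\delta_3\ge c_*>0$; the target is $(\sqrt5-1)/4$. I would use the multiplicity coordinates together with \eqref{eq:multiplicity-bracket} and write $S_3$ explicitly through $T$, using $T^3=2T$. I would first extract a restriction on $\ker J_g^*$; for example, horizontality ties the $u_0$-components to the others via $\Ad_g$. The equations $[x,y]=0$ and $D^*a_S=0$ then become vector identities in $\im\HH$, namely cross products of the $u_a,v_a$ together with the $r,s$ terms. I would use these to reduce $\delta_3$ to a quadratic form in a few scalar invariants, such as $|u_a\times v_b|$ and the $r,s$ terms, and then minimize it.

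I expect this to be the main obstacle. It requires showing that the negative terms $-B(a_S,b_S)$ and $-B(S[Sx,x],[Sy,y])$ are dominated on $\mathcal Z$. The appearance of $\sqrt5$ suggests that the final step is the minimization of a $2\times2$ quadratic form.

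\textbf{Step 3: compactness.} The set $\mathcal Z$ is compact and $\delta_3$ is continuous, so there is a neighborhood $\mathcal U\supset\mathcal Z$ on which $\delta_3\ge c_*/2$. On $\mathcal I\setminus\mathcal U$, the function $|z|^2+|\zeta|^2$ has a positive minimum $\mu$. Now take $\varepsilon$ small.
\begin{itemize}
\item On $\mathcal U$, Step 1 gives $N_\varepsilon\ge\varepsilon^3c_*/2-C_0\varepsilon^4>0$.
\item On the complement, either $|z|^2\ge\mu/2$, which makes the $O(1)$ term dominate, or $|\zeta|^2\ge\mu/2$, which gives $N_\varepsilon\ge a_0\varepsilon^2\mu/2-C\varepsilon^3>0$.
\end{itemize}
This yields a single $\varepsilon_0$ that works for all planes, which proves Theorem~\ref{thm:main}.
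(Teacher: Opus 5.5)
You have the paper's architecture in Steps 1 and 3: the compact frame space $\mathcal I$, the uniform bound $N_\varepsilon\ge a_0(|c|^2+\varepsilon^2|\zeta|^2)+\varepsilon^3\delta_3-C_0\varepsilon^4$, and a compactness argument separating $\mathcal Z$ from the set where $\delta_3$ is small. The genuine gap is Step 2, which is the actual content of the theorem. It is only announced, and your sketch lacks the ingredient that makes it true.

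\textbf{Commutation, $\zeta=0$ and orthonormality do not force $\delta_3>0$.} Consider
\[
x=\frac1{\sqrt2}\begin{pmatrix} i&i\\ i&-i\end{pmatrix},\qquad
y=\frac1{\sqrt2}\begin{pmatrix} -j&j\\ j&j\end{pmatrix}.
\]
This pair is $B$-orthonormal and satisfies $[x,y]=0$. It has $a=(0;0,-2k,0)$, $b=(0;-k,0,2k)$, $h=(0;0,0,2k)$ and $f_x=f_y=0$, so $\zeta=0$ and $\delta_3(x,y)=0$. You therefore need a quantitative, $g$-independent consequence of horizontality. Saying that horizontality ties $u_0$ to the other blocks via $\Ad_g$ is pointwise and $g$-dependent, so it does not supply one.

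\textbf{The missing idea is the moment bound.} Diagonalize the commuting plane as $\widehat x=A\diag(\sqrt2\,i,0)A^*$ and $\widehat y=A\diag(0,\sqrt2\,i)A^*$. Use $D^*=E_g^*$ on horizontal vectors to get $D^*\widehat x=\frac1{\sqrt2}\bar q_1 i q_1$ and $D^*\widehat y=\frac1{\sqrt2}\bar q_2 i q_2$, where $(q_1,q_2)^t=A^*ge$. This gives $H=|u_0|^2+|v_0|^2=\frac12(|q_1|^4+|q_2|^4)\in[\frac14,\frac12]$; the example above has $H=0$.

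\textbf{The remaining algebra is still substantial.} You must show:
\begin{itemize}
\item $r=s=0$, by a case analysis over the centralizer types of $y$ that uses $H<1$;
\item the six imaginary vectors lie in one plane;
\item the constraints assemble into a Hermitian matrix $K$ with spectrum $\{-1,0,1\}$;
\item $\delta_3=4pq+3(4q^2+t_0t_1)$.
\end{itemize}
Then $H\le\frac12$ gives $t_0t_1=p^2-\alpha q^2$ with $0<\alpha\le 2$, hence $\delta_3\ge 2(p^2+q^2)$. Finally $H\ge\frac14$ gives $4L^2+L\ge\frac14$ for $L=p^2+q^2$, so $L\ge(\sqrt5-1)/8$. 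That is where $\sqrt5$ comes from, not from a $2\times2$ eigenvalue problem.

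\textbf{Step 1 also needs sharpening.}
\begin{itemize}
\item An ambient error of size $O(\varepsilon|z|)$ cannot be absorbed. Young's inequality only gives $K\varepsilon|z|\le\frac18|z|^2+2K^2\varepsilon^2$, and an $\varepsilon^2$ loss swamps both $\varepsilon^2|\zeta|^2$ and $\varepsilon^3\delta_3$.
\item What holds, and what is needed, is an error $O(\varepsilon|z|^2+\varepsilon^2|z|)$. At order $\varepsilon$ the terms $\pm\frac32 B(a,c)$ cancel, leaving $\frac34 B(Sc,c)$. At order $\varepsilon^2$ the coefficient is $O(|c|)$ only after the Jacobi identity $B(c,b)=B([Sx,y],[x,Sy])+B(f_x,f_y)$.
\item That the true cubic coefficient differs from $\delta_3$ by $O(|z|)$ does not follow from agreement on commuting pairs. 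A polynomial vanishing on $\{c=0\}$ need not be linearly bounded by $|c|$. The bound comes from the explicit expansion, in which every term of $R_3$ carries a factor $c$.
\item The leading term of $\beta$ is $(D+E_g)^*z$, not $2D^*z$. This is harmless, since only boundedness of that linear map is used.
\end{itemize}
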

	
	The proof occupies the remainder of the main argument.  It combines the
	raw critical-set inequality of Proposition~\ref{s3:hand:positive} with
	the exact curvature and uniform estimates in
	Section~\ref{s3:sec:analytic}; the final assembly is
	Theorem~\ref{s3:thm:positive-final}.  The common parameter is specified
	by \eqref{s3:eq:epsilon0}, using a compact minimum whose strict
	positivity is proved.  No machine-generated rational certificate is a
	premise of this proof, and no claim about another value of $\ell$ is
	needed.  The earlier construction claim in \cite{PW} remains direct
	prior work; no priority claim is made here.
	
	\section{Reduction to a critical-set inequality}
	\label{sec:roadmap}\label{sec:criterion}
	\label{s3:sec:critical-system}
	
	We prove Theorem~\ref{thm:main} for the candidate metric. The argument has two independent
	parts. An analytic estimate controls the curvature uniformly over all
	planes. An algebraic inequality supplies the sign of its cubic term on
	the set where the lower-order positive terms vanish. We state both results
	after fixing the notation, and assemble them in
	Section~\ref{rw:completion}.
	
	\subsection{A fixed space of horizontal frames}
	
	Write \(G=\Sp(2)\), \(\mathfrak g=\mathfrak{sp}(2)\), and
	\[
	B(U,V)=-\frac12\Rea\tr_{\HH}(UV),\qquad |U|^2=B(U,U).
	\]
	Thus \(B\) is the fixed Euclidean inner product on \(\mathfrak g\).
	For imaginary quaternions, the dot and norm denote the ordinary Euclidean
	products on \(\im\HH=\R^3\), with
	\(uv=-u\cdot v+u\times v\). We use the coordinates
	\begin{equation}\label{rw:coordinates}
		(r;u_0,u_1,u_2)=
		\begin{pmatrix}
			u_0+u_1&r+u_2\\-r+u_2&u_0-u_1
		\end{pmatrix},
		\qquad
		B((r;u_i),(s;v_i))=rs+\sum_{i=0}^2u_i\cdot v_i.
	\end{equation}
	The indices label the three imaginary-coordinate blocks; they are not
	components of a vector in \(\R^3\).
	
	Fix
	\[
	S=S_3,\qquad
	S(r;u_0,u_1,u_2)=(3r;u_1,u_0-u_2,-u_1),\qquad
	Q_\varepsilon=I+\varepsilon S,\qquad P_\varepsilon=Q_\varepsilon^{-1}.
	\]
	The operators are self-adjoint for \(B\), and \(\|S\|=3\).
	We restrict initially to \(0\le\varepsilon\le\varepsilon_*\), where
	\(\varepsilon_*>0\) is small enough that
	\begin{equation}\label{rw:metric-comparison}
		\frac12I\le Q_\varepsilon\le\frac32I,
		\qquad \frac23I\le P_\varepsilon\le2I.
	\end{equation}
	For example, \(\varepsilon_*=1/6\) has this property.
	A tangent vector at \(g\) is written \(Ug\), with \(U\in\mathfrak g\), and
	\[
	\widetilde g_{\varepsilon,g}(Ug,Vg)=B(P_\varepsilon U,V).
	\]
	
	The quotient map \(\pi:G\to\Sigma\) is defined by the free action
	\(k\star g=\diag(k,k)g\diag(k,1)^{-1}\), where \(k\in\Sp(1)\).
	For \(\xi\in\im\HH\), put
	\begin{equation}\label{rw:orbit-maps}
		D\xi=\diag(\xi,\xi),\qquad
		E_g\xi=g\diag(\xi,0)g^*,\qquad J_g=D-E_g.
	\end{equation}
	A star on a quaternionic matrix means conjugate transpose. On a map from
	\(\im\HH\) to \(\mathfrak g\), it means the adjoint for the two fixed
	real inner products; for instance,
	\(\xi\cdot J_g^*U=B(J_g\xi,U)\).
	The useful formulas are
	\begin{equation}\label{s3:detail:adjoints}
		D^*(r;u_0,u_1,u_2)=u_0,\qquad
		E_g^*U=\frac12(g^*Ug)_{11}.
	\end{equation}
	They follow by taking the real trace in the definition of \(B\).
	Differentiating the orbit action gives the vertical vectors
	\((J_g\xi)g\). Hence
	\begin{equation}\label{rw:horizontal}
		Ug\text{ is horizontal for }\widetilde g_\varepsilon
		\quad\Longleftrightarrow\quad J_g^*P_\varepsilon U=0.
	\end{equation}
	
	Define the parameter-independent space
	\begin{equation}\label{rw:incidence}
		\mathcal I=\left\{(g,x,y):
		\begin{array}{l}
			g\in G,\quad x,y\in\mathfrak g,\quad J_g^*x=J_g^*y=0,\\
			|x|=|y|=1,\quad B(x,y)=0
		\end{array}\right\}.
	\end{equation}
	It is a closed subset of \(G\) times the Euclidean orthonormal
	two-frame space of \(\mathfrak g\), and is therefore compact.
	For a triple in \(\mathcal I\), set
	\begin{equation}\label{rw:moving-frame}
		X=Q_\varepsilon x,\qquad Y=Q_\varepsilon y.
	\end{equation}
	The vectors \(Xg,Yg\) are horizontal, since \(P_\varepsilon X=x\) and
	\(P_\varepsilon Y=y\).
	
	Conversely, let \(\Pi\subset\mathfrak g\) be the right-translated
	Lie algebra plane of any horizontal two-plane at \(g\).
	Equation~\eqref{rw:horizontal} gives
	\(P_\varepsilon\Pi\subset\ker J_g^*\).
	Choose a \(B\)-orthonormal basis \(x,y\) of \(P_\varepsilon\Pi\).
	Then \((g,x,y)\in\mathcal I\) and \(Q_\varepsilon x,Q_\varepsilon y\)
	span \(\Pi\). Thus one estimate on \(\mathcal I\) controls every
	horizontal plane of every metric in the parameter interval.
	
	Let \(N_\varepsilon(g,x,y)\) denote the quotient curvature numerator
	on \(d\pi_g(Xg),d\pi_g(Yg)\). Our curvature convention is
	\[
	R(U,V)W=\nabla_U\nabla_VW-\nabla_V\nabla_UW-\nabla_{[U,V]}W,
	\qquad N(U,V)=\langle R(U,V)V,U\rangle.
	\]
	The numerator has the sign of sectional curvature because the Gram
	determinant of an independent pair is positive.
	
	\subsection{The two estimates}
	
	For any \(x,y\in\mathfrak g\), define
	\begin{equation}\label{s3:eq:coeffs}
		\begin{gathered}
			c=[x,y],\qquad a=[Sx,y]+[x,Sy],\qquad b=[Sx,Sy],\\
			h=[Sx,y]-[x,Sy],\qquad f_x=[Sx,x],\qquad f_y=[Sy,y],\\
			\zeta=D^*a.
		\end{gathered}
	\end{equation}
	Thus \(c,a,b,h,f_x,f_y\) are Lie algebra vectors, whereas
	\(\zeta\in\im\HH\). The scalar used in the proof is
	\begin{equation}\label{s3:eq:delta}
		\delta_3(x,y)=-B(a,b)+\frac34B(Sa,a)
		+\frac14B(Sh,h)-B(Sf_x,f_y).
	\end{equation}
	On a commuting pair, it is the cubic coefficient of the moving-plane
	ambient curvature. Away from commuting pairs, the complete cubic
	coefficient has additional terms, displayed in
	\eqref{s3:analytic:ambient}.
	
	\begin{proposition}\label{rw:uniform-proposition}
		There are constants \(a_0>0\), \(C_0>0\), and
		\(0<\varepsilon_1\le\varepsilon_*\), independent of
		\((g,x,y)\in\mathcal I\), such that
		\begin{equation}\label{s3:eq:uniform}
			N_\varepsilon(g,x,y)\ge
			a_0\bigl(|c|^2+\varepsilon^2|\zeta|^2\bigr)
			+\varepsilon^3\delta_3-C_0\varepsilon^4
			\qquad(0<\varepsilon\le\varepsilon_1).
		\end{equation}
		The constant \(C_0\) may also be chosen so that
		\(|\delta_3|\le C_0\) on \(\mathcal I\).
	\end{proposition}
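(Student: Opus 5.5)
The plan is to expand the exact quotient numerator \eqref{eq:quotient-curvature-ped} for an \emph{arbitrary} triple \((g,x,y)\in\mathcal I\), not only for commuting pairs, as an explicit polynomial in \(\varepsilon\) plus a remainder. I would then check that every term other than the ones displayed in \eqref{s3:eq:uniform} either carries enough powers of \(|c|\) to be absorbed, or is \(O(\varepsilon^4)\). Uniformity comes from compactness: on \(\mathcal I\) we have \(|x|=|y|=1\), \(\|S\|=3\), \(|c|,|a|,|b|,|h|,|f_x|,|f_y|\le C\), and for \(\varepsilon\le\varepsilon_*\) the bounds \eqref{rw:metric-comparison} hold. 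Consequently \(P_\varepsilon=I-\varepsilon S+\varepsilon^2S^2-\varepsilon^3 S^3+\varepsilon^4 R_\varepsilon\) with \(\|R_\varepsilon\|\) uniformly bounded, and every Taylor remainder below is bounded independently of the frame. The last sentence of the proposition, \(|\delta_3|\le C_0\), is immediate from these bounds and \eqref{s3:eq:delta}.

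\textbf{Ambient term.} Substitute \(Z_\varepsilon=c+\varepsilon a+\varepsilon^2 b\) and the expansion of \(P_\varepsilon\) into \eqref{eq:invariant-curvature}. The expected orders are as follows.
\begin{itemize}
\item Order zero gives \(\tfrac12B(2c,c)-\tfrac34|c|^2=\tfrac14|c|^2\).
\item Order one gives \(\tfrac12B(a,c)+B(c,a)-\tfrac32B(a,c)+\tfrac34B(Sc,c)=\tfrac34B(Sc,c)\), which is bounded by \(C\varepsilon|c|^2\).
\item Order two collects \(B(b,c)\), \(B(Sa,c)\), \(B(S^2c,c)\) and \(\tfrac14|h|^2-B(f_x,f_y)\), together with \(-\tfrac34|a|^2+\tfrac12|a|^2\).
\end{itemize}
The Huizenga--Tapp identity \eqref{eq:HT-commuting-expansion} says the order-two coefficient vanishes when \(c=0\). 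What I actually need is the sharper statement that it equals an explicit expression that is \(O(|c|)\) uniformly. I would prove this by redoing the Huizenga--Tapp algebra without assuming \(c=0\): use the Jacobi identity and \(\operatorname{ad}\)-invariance to rewrite \(\tfrac14|h|^2-\tfrac14|a|^2-B(f_x,f_y)\) as \(-B([x,y],[Sx,Sy])\)-type terms plus \(B(c,\cdot)\) terms, so that every surviving term contains a factor \(c\). Similarly, the order-three coefficient equals \(\delta_3+O(|c|)\), and the order-four-and-higher tail is bounded by \(C\varepsilon^4\).

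The resulting bound is
\[
\mathcal N^{\widetilde g_\varepsilon}\ge \tfrac14|c|^2-C\varepsilon|c|^2-C\varepsilon^2|c|-C\varepsilon^3|c|+\varepsilon^3\delta_3-C\varepsilon^4 .
\]
The mixed terms are absorbed by the inequalities \(C\varepsilon^2|c|\le\tfrac1{32}|c|^2+8C^2\varepsilon^4\) and \(C\varepsilon^3|c|\le\tfrac1{32}|c|^2+8C^2\varepsilon^6\). For \(\varepsilon\) small this leaves \(\tfrac18|c|^2+\varepsilon^3\delta_3-C\varepsilon^4\).

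\textbf{O'Neill term.} Write \eqref{eq:beta-exact-ped} as \(\beta=L_g(c)+\varepsilon\zeta+\varepsilon\,O(|c|)+\varepsilon^2 O(1)\), where \(L_g=2D^*-J_g^*\) is uniformly bounded. Since \(M_{\varepsilon,g}\) is uniformly bounded above (by \(2\|J_g\|^2\)), we have \(\tfrac34\langle\beta,M^{-1}\beta\rangle\ge\lambda|\beta|^2\) for a uniform \(\lambda>0\). Then
\[
|\beta|^2\ge\tfrac12\varepsilon^2|\zeta|^2-C|c|^2-C\varepsilon^4 .
\]
Because the O'Neill term is nonnegative, for any \(\theta\in[0,1]\) we have \(N_\varepsilon\ge\mathcal N^{\widetilde g_\varepsilon}+\theta\lambda|\beta|^2\). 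Choosing \(\theta\) small enough that \(\theta\lambda C\le\tfrac1{16}\) gives
\[
N_\varepsilon\ge\tfrac1{16}|c|^2+\tfrac12\theta\lambda\varepsilon^2|\zeta|^2+\varepsilon^3\delta_3-C_0\varepsilon^4 ,
\]
which is \eqref{s3:eq:uniform} with \(a_0=\min(\tfrac1{16},\tfrac12\theta\lambda)\). The trick of taking only a fraction \(\theta\) of the O'Neill term is what prevents the possible cancellation between \(L_g(c)\) and \(\varepsilon\zeta\) from causing harm.

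\textbf{Main obstacle.} The hard step is the exact off-commuting bookkeeping: showing that the order-one ambient coefficient is \(O(|c|^2)\) and the order-two coefficient is \(O(|c|)\) \emph{uniformly}. Mere vanishing on \(\{c=0\}\) would not suffice, since a polynomial vanishing on that variety need not be Lipschitz in \(|c|\) near singular points. My first attempt would be to carry the identities of \cite[Proposition~3.1]{HT} through with \(c\neq0\) retained, yielding the expansion \eqref{s3:analytic:ambient} with every non-\(\delta_3\) term manifestly containing a factor \(c\).
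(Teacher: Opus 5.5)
Your proposal is correct and follows essentially the same route as the paper, which also expands the ambient numerator exactly with \(c\ne0\) kept, uses Jacobi to get \(-\frac14|a|^2+\frac14|h|^2-B(f_x,f_y)=-B(c,b)\) so that every non-\(\delta_3\) term below order four carries a factor of \(c\), absorbs the \(\varepsilon^2|c|\) terms by completing a square, and bounds the O'Neill term below by a multiple of \(|\beta|^2\). Your device of using only a fraction \(\theta\) of the O'Neill term does the same job as the paper's inequality \(\varepsilon^2|\zeta|^2\le2|L_\varepsilon c+\varepsilon\zeta|^2+2K^2|c|^2\) combined with a small \(a_0\), and your Neumann-series tail plays the role of the paper's exact remainder \(-\frac34\varepsilon^4B(P\mathcal W,\mathcal W)\).
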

	
	The two nonnegative terms in this estimate single out the set
	\begin{equation}\label{s3:eq:raw}
		\mathcal Z=\{(g,x,y)\in\mathcal I:c=0,\ \zeta=0\}.
	\end{equation}
	
	\begin{theorem}\label{s3:thm:critical}
		For every \((g,x,y)\in\mathcal Z\),
		\[
		\delta_3(x,y)\ge\frac{\sqrt5-1}{4}>\frac14.
		\]
	\end{theorem}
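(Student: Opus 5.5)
We prove Theorem~\ref{s3:thm:critical} by reducing the critical conditions to a small number of scalar identities.

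\textbf{Step 1: horizontality.} We first extract what horizontality imposes on each vector of the pair. For \(x=(r;u_0,u_1,u_2)\), the condition \(J_g^*x=0\) reads \(u_0=E_g^*x=\tfrac12(g^*xg)_{11}\). Since \(\Ad_g\) is an isometry of \(B\) and \(E_g^*\) extracts the \(\diag(\cdot,0)\)-part of \(\Ad_{g^{-1}}x\), we get \(|u_0|^2=|E_g^*x|^2\). This quantity is at most the squared norm of the \(\diag(\cdot,0)\)-component of \(\Ad_{g^{-1}}x\), which bounds \(|u_0|\) in terms of \(|x|\). The intended consequence is a spectral restriction: horizontal vectors cannot be concentrated in the \(U_0\) block. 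I would check the Gram matrix of \(\ker J_g^*\) against the block structure, so that a pure \(U_0\) vector is never horizontal.

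\textbf{Step 2: commuting and critical equations.} Next we write the conditions \(c=[x,y]=0\) and \(\zeta=D^*a=0\) explicitly using the bracket formulas \eqref{eq:multiplicity-bracket}. Since \(S\) mixes blocks through \(T\) (with \(T^3=2T\)) and scales \(m\) by \(3\), we have \(Sx=(3r;u_1,u_0-u_2,-u_1)\). Then \(\zeta=D^*a\) is the \(w_0\)-component of \([Sx,y]+[x,Sy]\), namely
\[
2\bigl(u_1\times v_0+(u_0-u_2)\times v_1-u_1\times v_2+u_0\times v_1+u_1\times(v_0-v_2)-u_2\times v_1\bigr),
\]
which combines with the commuting relation \(u_0\times v_0+u_1\times v_1+u_2\times v_2=0\) and the remaining components. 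I would then expand \(\delta_3=-B(a,b)+\tfrac34B(Sa,a)+\tfrac14B(Sh,h)-B(Sf_x,f_y)\) as a polynomial in the cross and dot products of the blocks. At each stage we use \(c=0\) and \(\zeta=0\) to eliminate terms, aiming to write \(\delta_3\) as a quadratic form in a few scalar invariants. Candidate invariants are \(r^2,s^2\), \(|u_i\times v_j|^2\), and combinations such as \(|u_1\times v_1|^2\), all subject to the normalizations \(|x|=|y|=1\) and \(B(x,y)=0\).

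\textbf{Step 3: minimization.} Commuting pairs in \(\mathfrak{sp}(2)\) lie in a common maximal torus, so up to \(\Ad_{D(q)}\)-symmetry, which preserves \(S\), \(B\), \(D^*\) and horizontality, the pair can be placed in a low-dimensional normal form. I expect this to leave a constrained minimization in two or three real parameters. Positivity should come out as a sum of squares plus a term like \(\tfrac34\) times a norm that the horizontality bound from Step~1 keeps away from zero. The constant \((\sqrt5-1)/4\) suggests that the minimum is the smaller root of a quadratic, for example the minimum of \(\tfrac12(t^2+\ldots)\) on an arc where \(4\lambda^2+2\lambda-1=0\). I would identify this as the lowest eigenvalue of a \(2\times2\) symmetric matrix that appears after the reduction.

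\textbf{Main obstacle.} The hardest step is Step~2. The expansion of \(\delta_3\) is long, and the critical constraints must be used at exactly the right points to kill the indefinite cubic cross terms coming from \(-B(a,b)\) and \(-B(Sf_x,f_y)\). If direct elimination fails, I would first try parametrizing commuting horizontal pairs through the torus normal form and then substituting into \(\delta_3\). That replaces the symbolic elimination with a finite-dimensional, explicit quadratic minimization.
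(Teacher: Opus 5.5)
\textbf{There is a genuine gap.} The decisive input from horizontality is missing, and the restriction you propose in Step~1 points the wrong way. An upper bound on \(|u_0|\) (``horizontal vectors cannot be concentrated in the \(U_0\) block'') cannot keep anything away from zero. Consider the pair
\[
x=\bigl(0;0,\tfrac{i}{\sqrt2},\tfrac{i}{\sqrt2}\bigr),\qquad
y=\bigl(0;0,-\tfrac{j}{\sqrt2},\tfrac{j}{\sqrt2}\bigr).
\]
It is \(B\)-orthonormal, satisfies \(xy=yx\) and \(\zeta=0\), and has \(u_0=v_0=0\). For it, \(a=(0;0,-2k,0)\), \(b=(0;-k,0,2k)\), \(h=(0;0,0,2k)\) and \(f_x=f_y=0\), so \(\delta_3=0\). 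What the proof needs is the \emph{lower} bound \(H=|u_0|^2+|v_0|^2\ge\frac14\), together with \(H\le\frac12\). This is a property of commuting horizontal pairs only: a single horizontal unit vector can have \(u_0=0\), since \(\ker J_g^*\cap\ker D^*\) has dimension at least \(4\). The paper obtains the bound as follows. It rotates the commuting plane to \(\widehat x=A\diag(\sqrt2\,i,0)A^*\), \(\widehat y=A\diag(0,\sqrt2\,i)A^*\). It then uses \(D^*=E_g^*\) on \(\ker J_g^*\) to get \(D^*\widehat x=\tfrac1{\sqrt2}\overline q_1iq_1\) and \(D^*\widehat y=\tfrac1{\sqrt2}\overline q_2iq_2\), where \((q_1,q_2)^t=A^*ge\). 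Hence \(H=\tfrac12(|q_1|^4+|q_2|^4)\in[\tfrac14,\tfrac12]\).

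Steps~2 and~3 are a plan rather than an argument. Your formula for \(\zeta\) is correct: it equals \(4\{(u_0-u_2)\times v_1+u_1\times(v_0-v_2)\}\). However, the torus normal form you suggest does not by itself simplify \(\delta_3\) or \(\zeta\), because the conjugator \(A\) is an arbitrary element of \(\Sp(2)\) while \(S\) commutes only with \(\Ad_{D(q)}\). The reduction that actually closes is intrinsic, in three steps:
\begin{enumerate}
\item On orthonormal pairs with \(c=\zeta=0\) and \(H<1\), one shows \(r=s=0\), by a case analysis over the centralizer type of \(y\).
\item The six vectors \(u_i,v_i\) then lie in one plane.
\item Consequently \(z,w\in\C^3\) form a complex orthonormal frame, and \(K=-i(zw^*-wz^*)\) has spectrum \(\{-1,0,1\}\).
\end{enumerate}
With \(p=u_0\cdot v_1-u_1\cdot v_0\), \(q=u_0\cdot v_2-u_2\cdot v_0\) and \(t_i=-2\omega(u_i,v_i)\), this gives
\[
t_0t_1t_2=t_2p^2+t_1q^2,\qquad H=t_0^2+p^2+q^2,\qquad t_1t_2=H-1,\qquad |t_i|\le1.
\]
It follows that \(\delta_3=4pq+3(4q^2+t_0t_1)\ge2L\), where \(L=p^2+q^2\). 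Your quadratic \(4\lambda^2+2\lambda-1=0\) is in fact the right one, but not as an eigenvalue. It arises with \(\lambda=2L\) from \(\tfrac14\le H=t_0^2+L\le4L^2+L\), using \(|t_0|\le L/(1-H)\le2L\). Both this step and the bound \(\delta_3\ge2L\) (via \(\alpha=t_1^2/(1-H)\le2\)) require the two-sided bound on \(H\), which your Step~1 does not supply.
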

	
	Proposition~\ref{rw:uniform-proposition} is proved in
	Section~\ref{s3:sec:analytic}. Theorem~\ref{s3:thm:critical} is proved in
	Sections~\ref{s3:detail:spectral}--\ref{s3:hand:critical}.
	The distinction between the undeformed zero set and \(\mathcal Z\)
	is important. The baseline quotient numerator vanishes exactly when
	\(c=0\). On a fixed commuting pair with \(\zeta\ne0\), the O'Neill
	term supplies a positive quadratic coefficient. Only when
	\(c=\zeta=0\) does the argument require positivity of the cubic
	coefficient. These assertions follow from the exact formulas below.
	
	The algebraic proof uses horizontality to obtain
	\(1/4\le |D^*x|^2+|D^*y|^2\le1/2\), then proves the cubic inequality
	on the larger set of orthonormal pairs satisfying this bound together
	with \(c=\zeta=0\). The analytic proof treats all of \(\mathcal I\)
	without dividing by \(|c|\) or \(|\zeta|\).
	Finally, compactness separates the region where \(\delta_3\) is small
	from \(\mathcal Z\); on its complement the cubic term absorbs the
	fourth-order error. This yields one parameter interval for all planes.
	
	\section{Curvature and uniform control}
	\label{s3:sec:analytic}
	
	Throughout this section \(S=S_3\), \(Q=I+\varepsilon S\), and
	\(P=Q^{-1}\). The pair \(x,y\) belongs to the fixed space
	\(\mathcal I\) in \eqref{rw:incidence}, and \(X=Qx,Y=Qy\).
	All unmarked norms and adjoints use \(B\) on \(\mathfrak g\) and the
	Euclidean product on \(\im\HH\). We use the bracket expressions
	\(c,a,b,h,f_x,f_y,\zeta,\delta_3\) from
	\eqref{s3:eq:coeffs}--\eqref{s3:eq:delta}.
	We first derive exact formulas, and then estimate their remainders.
	
	\subsection{The invariant-metric identity}
	
	For a positive self-adjoint operator \(P\) on \(\mathfrak g\), write
	\(\langle U,V\rangle_P=B(PU,V)\), and define the symmetric
	\(\mathfrak g\)-valued bilinear map
	\[
	F_P(U,V)=\frac12\bigl([U,PV]-[PU,V]\bigr).
	\]
	The reference inner product satisfies
	\begin{equation}\label{s3:analytic:adinvariance}
		B([U,V],W)=B(U,[V,W]).
	\end{equation}
	This follows from cyclicity of the real quaternionic trace.
	
	The invariant-metric curvature identity in our conventions is
	\begin{equation}\label{s3:analytic:invariantcurvature}
		\begin{aligned}
			N_P(U,V)={}&
			\frac12B([PU,V]+[U,PV],[U,V])
			-\frac34B(P[U,V],[U,V])\\
			&+B(F_P(U,V),P^{-1}F_P(U,V))
			-B(F_P(U,U),P^{-1}F_P(V,V)).
		\end{aligned}
	\end{equation}
	Here \(N_P(U,V)\) is the total-space curvature numerator on
	\(Ug,Vg\). We recall its derivation to fix the right-invariant signs;
	compare \cite{Puttmann}.
	
	For the right-invariant fields \(U^R_g=Ug\), one has
	\([U^R,V^R]=-[U,V]^R\). Koszul's formula gives
	\[
	\nabla_{U^R}V^R=-\Lambda(U,V)^R,\qquad
	\Lambda(U,V)=\frac12[U,V]+P^{-1}F_P(U,V).
	\]
	Metric compatibility says
	\[
	\langle\Lambda(U,V),W\rangle_P
	=-\langle V,\Lambda(U,W)\rangle_P.
	\]
	The right-trivialized curvature is therefore
	\[
	R_P(U,V)W
	=\Lambda(U,\Lambda(V,W))-\Lambda(V,\Lambda(U,W))
	-\Lambda([U,V],W).
	\]
	Put \(C=[U,V]\). Applying metric compatibility, the symmetry of \(F_P\),
	and \eqref{s3:analytic:adinvariance}, its three contractions are
	\[
	\begin{aligned}
		\langle\Lambda(U,\Lambda(V,V)),U\rangle_P
		&=-B(F_P(U,U),P^{-1}F_P(V,V)),\\
		-\langle\Lambda(V,\Lambda(U,V)),U\rangle_P
		&=B(F_P(U,V),P^{-1}F_P(U,V))-\frac14B(PC,C),\\
		-\langle\Lambda(C,V),U\rangle_P
		&=\frac12B([PU,V]+[U,PV],C)-\frac12B(PC,C).
	\end{aligned}
	\]
	For the middle line, write
	\(\Lambda(U,V)=C/2+P^{-1}F_P(U,V)\) and
	\(\Lambda(V,U)=-C/2+P^{-1}F_P(U,V)\); the mixed terms cancel.
	For the last line, substitute \(C,V,U\) into the formula for
	\(\Lambda\) and move brackets using
	\eqref{s3:analytic:adinvariance}.
	Their sum proves \eqref{s3:analytic:invariantcurvature}.
	In particular, \(P=I\) gives \(N_I(U,V)=|[U,V]|^2/4\).
	
	\subsection{The exact moving-plane expansion}
	
	Since \(PX=x\) and \(PY=y\), direct substitution gives
	\begin{equation}\label{s3:analytic:substitutions}
		\begin{gathered}
			[X,Y]=c+\varepsilon a+\varepsilon^2b,\qquad
			[PX,Y]+[X,PY]=2c+\varepsilon a,\\
			F_P(X,Y)=\frac{\varepsilon}{2}h,\qquad
			F_P(X,X)=\varepsilon f_x,\qquad
			F_P(Y,Y)=\varepsilon f_y.
		\end{gathered}
	\end{equation}
	Introduce the Lie algebra vector
	\[
	\mathcal W=b-Sa+S^2c,\qquad d=c+\varepsilon(a-Sc).
	\]
	The purpose of \(\mathcal W\) is to retain the dependence on \(P\)
	in one exact remainder:
	\[
	[X,Y]=Qd+\varepsilon^2\mathcal W.
	\]
	Using \(PQ=I\) and self-adjointness,
	\begin{equation}\label{s3:analytic:resolventexpansion}
		B(P[X,Y],[X,Y])
		=B(Qd,d)+2\varepsilon^2B(d,\mathcal W)
		+\varepsilon^4B(P\mathcal W,\mathcal W).
	\end{equation}
	The polynomial part of this expression is
	\[
	\begin{aligned}
		&|c|^2+\varepsilon\{2B(c,a)-B(Sc,c)\}\\
		&\quad+\varepsilon^2\{
		|a|^2+|Sc|^2+2B(c,b)-2B(a,Sc)\}\\
		&\quad+\varepsilon^3\{
		2B(a,b)-B(Sa,a)+2B(Sa,Sc)
		-2B(Sc,b)-B(S^2c,Sc)\}.
	\end{aligned}
	\]
	For example, the cubic coefficient before simplification is
	\(B(S(a-Sc),a-Sc)+2B(a-Sc,\mathcal W)\);
	expanding it gives the last line.
	
	One algebraic cancellation is needed at second order. Set
	\(A_1=[Sx,y]\), \(A_2=[x,Sy]\), so that \(a=A_1+A_2\) and
	\(h=A_1-A_2\). Jacobi's identity gives
	\[
	\begin{aligned}
		B(c,b)
		&=B\bigl(x,[y,[Sx,Sy]]\bigr)\\
		&=B\bigl(x,[[y,Sx],Sy]+[Sx,[y,Sy]]\bigr)\\
		&=B(A_1,A_2)+B(f_x,f_y).
	\end{aligned}
	\]
	Together with \(|h|^2-|a|^2=-4B(A_1,A_2)\), this proves
	\begin{equation}\label{s3:analytic:quadraticcancellation}
		-\frac14|a|^2+\frac14|h|^2-B(f_x,f_y)=-B(c,b).
	\end{equation}
	
	The first term of \eqref{s3:analytic:invariantcurvature} becomes
	\(\frac12B(2c+\varepsilon a,c+\varepsilon a+\varepsilon^2b)\).
	Its last two terms become
	\[
	\varepsilon^2\left(\frac14|h|^2-B(f_x,f_y)\right)
	+\varepsilon^3\left(\frac14B(Sh,h)-B(Sf_x,f_y)\right).
	\]
	Combining these expressions with
	\eqref{s3:analytic:resolventexpansion} and
	\eqref{s3:analytic:quadraticcancellation} proves the exact identity
	\begin{equation}\label{s3:analytic:ambient}
		\begin{aligned}
			N_\varepsilon^{\rm amb}={}&
			\frac14|c|^2+\frac34\varepsilon B(Sc,c)\\
			&+\varepsilon^2\left\{
			-\frac32B(c,b)+\frac32B(a,Sc)-\frac34|Sc|^2\right\}\\
			&+\varepsilon^3(\delta_3+R_3)
			-\frac34\varepsilon^4B(P\mathcal W,\mathcal W),\\
			R_3={}&\frac32B(Sc,b)-\frac32B(Sa,Sc)
			+\frac34B(S^2c,Sc).
		\end{aligned}
	\end{equation}
	The scalar \(R_3\) vanishes when \(c=0\). In particular,
	\begin{equation}\label{s3:analytic:ambientcommuting}
		c=0\quad\Longrightarrow\quad
		N_\varepsilon^{\rm amb}
		=\varepsilon^3\delta_3
		-\frac34\varepsilon^4B(P(b-Sa),b-Sa).
	\end{equation}
	This is the inverse-linear expansion of \cite{HT} with deformation
	operator \(\Psi=-S\), derived here in the conventions of this paper.
	
	\subsection{The vertical bracket and the quotient curvature}
	
	We now compute the term added by the Riemannian submersion.
	For a tangent vector \(Ug\), define
	\[
	\lambda_g(Ug)=J_g^*PU,\qquad M=J_g^*PJ_g.
	\]
	Thus \(\lambda\) is an \(\im\HH\)-valued one-form, and \(M\) is a
	positive definite operator on \(\im\HH\). In an orthonormal basis of
	\(\im\HH\), \(M\) is the Gram matrix of the vertical vectors
	\((J_g\xi)g\). The form \(\lambda_g\) records inner products with
	those vectors. In particular,
	\[
	\lambda_g((J_g\xi)g)=M\xi,\qquad
	\ker\lambda_g=\mathcal H_{\varepsilon,g}.
	\]
	
	Let \(\overline X,\overline Y\) be local horizontal extensions of
	\(Xg,Yg\), and put
	\[
	\beta=(d\lambda)_g(Xg,Yg)\in\im\HH.
	\]
	Since \(\lambda(\overline X)=\lambda(\overline Y)=0\), the exterior
	derivative identity gives
	\(\beta=-\lambda_g([\overline X,\overline Y]_g)\).
	Writing the vertical part of the bracket as \((J_g\xi)g\) therefore
	gives \(M\xi=-\beta\). It follows that
	\[
	[\overline X,\overline Y]_g^{\mathcal V}
	=(-J_gM^{-1}\beta)g,\qquad
	\bigl|[\overline X,\overline Y]_g^{\mathcal V}\bigr|_{\widetilde g_\varepsilon}^2
	=\beta\cdot M^{-1}\beta.
	\]
	O'Neill's formula, as presented in Petersen's textbook
	\cite[Theorem~4.5.3]{ONeill}, now reads
	\begin{equation}\label{s3:analytic:oneill}
		N_\varepsilon=N_\varepsilon^{\rm amb}
		+\frac34\beta\cdot M^{-1}\beta.
	\end{equation}
	
	To compute \(\beta\), we may use constant right-invariant extensions:
	the exterior derivative is tensorial, even though these extensions need
	not remain horizontal. Along \(g(t)=e^{tX}g\),
	\[
	\left.\frac{d}{dt}\right|_{t=0}E_{g(t)}\xi=[X,E_g\xi],
	\qquad
	\left.\frac{d}{dt}\right|_{t=0}J_{g(t)}\xi=-[X,E_g\xi].
	\]
	Together with \([X^R,Y^R]=-[X,Y]^R\), this gives, for every
	\(\xi\in\im\HH\),
	\[
	\begin{aligned}
		\xi\cdot\beta
		&=-B(PY,[X,E_g\xi])+B(PX,[Y,E_g\xi])
		+B(P[X,Y],J_g\xi)\\
		&=B([X,PY]+[PX,Y],E_g\xi)+B(P[X,Y],J_g\xi).
	\end{aligned}
	\]
	Consequently,
	\[
	\beta=E_g^*([X,PY]+[PX,Y])+J_g^*P[X,Y].
	\]
	Using
	\[
	P[X,Y]=c+\varepsilon(a-Sc)+\varepsilon^2P\mathcal W
	\]
	and \(E_g+J_g=D\), we obtain the exact formula
	\begin{equation}\label{s3:analytic:beta}
		\boxed{\beta=(D+E_g)^*c
			+\varepsilon(\zeta-J_g^*Sc)
			+\varepsilon^2J_g^*P\mathcal W.}
	\end{equation}
	
	At \(\varepsilon=0\), equations
	\eqref{s3:analytic:ambient} and \eqref{s3:analytic:oneill} give
	\begin{equation}\label{s3:analytic:baseline}
		N_0=\frac14|c|^2+
		\frac34\bigl((D+E_g)^*c\bigr)\cdot
		(J_g^*J_g)^{-1}\bigl((D+E_g)^*c\bigr).
	\end{equation}
	Thus \(N_0=0\) exactly when \(c=0\).
	For a fixed commuting pair,
	\(\beta=\varepsilon\zeta+O(\varepsilon^2)\), so
	\[
	N_\varepsilon
	=\frac34\varepsilon^2\,
	\zeta\cdot(J_g^*J_g)^{-1}\zeta+O(\varepsilon^3).
	\]
	If also \(\zeta=0\), then \(\beta=O(\varepsilon^2)\), and
	\[
	N_\varepsilon=\varepsilon^3\delta_3+O(\varepsilon^4).
	\]
	These expansions explain the definition of \(\mathcal Z\).
	The following argument supplies the uniform estimate needed when the
	point and the plane vary with \(\varepsilon\).
	
	\subsection{A uniform estimate with a fourth-order error}
	
	\begin{proof}[Proof of Proposition~\ref{rw:uniform-proposition}]
		All bounds in this proof are independent of \((g,x,y)\in\mathcal I\).
		Bilinearity of the bracket on the finite-dimensional space
		\(\mathfrak g\) gives a constant \(C_{\rm br}\) such that
		\[
		|[U,V]|\le C_{\rm br}|U|\,|V|.
		\]
		For example, one can take the maximum of \(|[U,V]|\) over the
		product of the two unit spheres. Since \(|x|=|y|=1\) and \(S\) is
		fixed, the vectors \(c,a,b,h,f_x,f_y,\mathcal W\) and the scalar
		\(\delta_3\) are uniformly bounded.
		
		Conjugation by \(g\) preserves \(B\), and hence
		\[
		|D\xi|=|\xi|,\qquad |E_g\xi|=\frac{|\xi|}{\sqrt2}.
		\]
		The triangle inequalities imply
		\[
		\left(1-\frac1{\sqrt2}\right)|\xi|
		\le |J_g\xi|\le2|\xi|.
		\]
		Together with \eqref{rw:metric-comparison}, this gives
		\begin{equation}\label{s3:analytic:Mbounds}
			\frac23\left(1-\frac1{\sqrt2}\right)^2 I\le M\le8I,
			\qquad M^{-1}\ge\frac18I.
		\end{equation}
		In particular, \(M\) stays uniformly invertible.
		
		Set
		\[
		L_\varepsilon=(D+E_g)^*-\varepsilon J_g^*S.
		\]
		Both \(L_\varepsilon\) and \(J_g^*P\mathcal W\) are uniformly
		bounded for \(0\le\varepsilon\le\varepsilon_*\).
		Let \(t=|c|\). Each term in \(R_3\) in
		\eqref{s3:analytic:ambient} has at least one factor of \(c\).
		Cauchy--Schwarz, the preceding bounds, and
		\(\varepsilon_*\le1\) therefore give a single constant \(K\ge1\),
		chosen large enough that
		\begin{equation}\label{rw:coarse-bounds}
			\begin{gathered}
				|\delta_3|\le K,\qquad
				\|L_\varepsilon\|\le K,\qquad |J_g^*P\mathcal W|\le K,\\
				N_\varepsilon^{\rm amb}\ge
				\left(\frac14-K\varepsilon\right)t^2
				-K\varepsilon^2t+\varepsilon^3\delta_3-K\varepsilon^4.
			\end{gathered}
		\end{equation}
		To verify the last inequality directly, the first-order term is bounded
		by a constant times \(\varepsilon t^2\), the second-order coefficient
		by a constant times \(t+t^2\), and \(R_3\) by a constant times
		\(t+t^2\). The exact last term is bounded by a constant times
		\(\varepsilon^4\), since \(P\) and \(\mathcal W\) are bounded.
		Absorbing \(\varepsilon^2t^2,\varepsilon^3t^2\) into
		\(K\varepsilon t^2\), and \(\varepsilon^3t\) into
		\(K\varepsilon^2t\), proves the claim.
		
		Choose
		\[
		0<\varepsilon_1\le\min\{\varepsilon_*,(8K)^{-1}\}.
		\]
		For \(0<\varepsilon\le\varepsilon_1\), the coefficient of \(t^2\)
		in \eqref{rw:coarse-bounds} is at least \(1/8\).
		The square
		\[
		0\le\left(\frac{t}{4}-2K\varepsilon^2\right)^2
		=\frac{t^2}{16}-K\varepsilon^2t+4K^2\varepsilon^4
		\]
		then gives
		\begin{equation}\label{s3:analytic:ambientlower}
			N_\varepsilon^{\rm amb}\ge
			\frac1{16}|c|^2+\varepsilon^3\delta_3
			-(K+4K^2)\varepsilon^4.
		\end{equation}
		In particular, the absorption leaves the cubic coefficient unchanged.
		
		For the O'Neill term, \eqref{s3:analytic:beta} gives
		\[
		\bigl|\beta-(L_\varepsilon c+\varepsilon\zeta)\bigr|
		\le K\varepsilon^2.
		\]
		For any Euclidean vectors \(u,v\),
		\[
		|u+v|^2-\frac12|u|^2+|v|^2
		=\frac12|u+2v|^2\ge0.
		\]
		Applying this identity and \eqref{s3:analytic:Mbounds} yields
		\begin{equation}\label{s3:analytic:oneilllower}
			\frac34\beta\cdot M^{-1}\beta
			\ge\frac3{32}|\beta|^2
			\ge\frac3{64}|L_\varepsilon c+\varepsilon\zeta|^2
			-\frac{3K^2}{32}\varepsilon^4.
		\end{equation}
		To allow cancellation between \(L_\varepsilon c\) and
		\(\varepsilon\zeta\), use their defining relation in the form
		\[
		\varepsilon^2|\zeta|^2
		\le2|L_\varepsilon c+\varepsilon\zeta|^2+2K^2|c|^2.
		\]
		It follows that
		\[
		\frac1{16}|c|^2+\frac3{64}|L_\varepsilon c+\varepsilon\zeta|^2
		\ge a_0\bigl(|c|^2+\varepsilon^2|\zeta|^2\bigr),
		\qquad
		a_0=\min\left\{\frac1{16(1+2K^2)},\frac3{128}\right\}>0.
		\]
		Adding \eqref{s3:analytic:ambientlower} and
		\eqref{s3:analytic:oneilllower} proves \eqref{s3:eq:uniform},
		with any
		\[
		C_0\ge K+4K^2+\frac{3K^2}{32}.
		\]
		This choice also gives \(|\delta_3|\le C_0\).
		Every bound is uniform, and the two square completions are valid even
		when \(c\) or \(\zeta\) vanishes.
	\end{proof}
	
	\section{The horizontal restriction}
	\label{s3:detail:spectral}
	
	The analytic estimate reduces the problem to \(\delta_3>0\) on the
	critical set \(\mathcal Z\). We now extract the geometric restriction
	needed for the algebraic proof.
	For \(x=(r;u_0,u_1,u_2)\), \(y=(s;v_0,v_1,v_2)\), put
	\[
	H(x,y)=|D^*x|^2+|D^*y|^2=|u_0|^2+|v_0|^2,
	\]
	where \(D^*\) is the adjoint in \eqref{s3:detail:adjoints}.
	Here \(B\) and the coordinates are those of \eqref{rw:coordinates}.
	We will show that every horizontal commuting orthonormal pair satisfies
	\(1/4\le H\le1/2\). The forcing condition \(\zeta=0\) is not needed
	for this bound.
	
	\subsection{Commuting matrices and changes of frame}
	
	\begin{lemma}\label{s3:detail:simdiag}
		If \(Y_1,Y_2\in\mathfrak{sp}(2)\) commute, there exist \(A\in\Sp(2)\)
		and real numbers \(a_1,a_2,b_1,b_2\) such that
		\[
		Y_1=A\diag(ia_1,ia_2)A^*,\qquad
		Y_2=A\diag(ib_1,ib_2)A^*.
		\]
	\end{lemma}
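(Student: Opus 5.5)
The plan is to use the standard fact that two commuting elements of a compact Lie algebra lie in a common maximal abelian subalgebra, and that all maximal abelian subalgebras of \(\mathfrak{sp}(2)\) are \(\Ad\)-conjugate to the diagonal one, \(\mathfrak t=\{\diag(ia_1,ia_2):a_1,a_2\in\R\}\).

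First I would verify that \(\mathfrak t\) is a maximal abelian subalgebra. It is abelian. To see maximality, take \(Z\in\mathfrak{sp}(2)\) commuting with \(\diag(i,2i)\). Writing \(Z\) in block form with diagonal entries \(z_1,z_2\in\im\HH\) and off-diagonal entry \(w\), the condition \([Z,\diag(i,2i)]=0\) gives \(iz_1=z_1i\) and \(iz_2=z_2i\), so \(z_1,z_2\in\R i\). It also gives \(w\cdot 2i=i\cdot w\). Writing \(w=\alpha+\beta j\) with \(\alpha,\beta\in\C\), this becomes \(2i\alpha+(-2i)\beta j=i\alpha+i\beta j\), which forces \(\alpha=\beta=0\). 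Hence the centralizer of \(\diag(i,2i)\) is exactly \(\mathfrak t\).

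Next, given commuting \(Y_1,Y_2\), the real span \(\Span\{Y_1,Y_2\}\) is abelian and so lies in some maximal abelian subalgebra \(\mathfrak a\) (by dimension, maximal abelian subalgebras exist). By the maximal torus theorem for the compact connected group \(\Sp(2)\), the closure of \(\exp\mathfrak a\) is a torus, contained in a maximal torus. All maximal tori are conjugate to \(T=\exp\mathfrak t\). Therefore there is \(A\in\Sp(2)\) with \(\Ad_{A^{-1}}\mathfrak a\subset\mathfrak t\). Since \(A^{-1}=A^*\), this gives \(Y_m=A\,\diag(ia_m,ia_m')\,A^*\) for \(m=1,2\), which is the claim with the paper's labelling of the real numbers.

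If one prefers to avoid the torus theorem, there is a more hands-on route. First diagonalize \(Y_1\) by the quaternionic spectral theorem for skew-Hermitian matrices: every unit imaginary quaternion is conjugate to \(i\), so \(Y_1=A_1\diag(ia_1,ia_2)A_1^*\). Then conjugate \(Y_2\) by \(A_1^*\) and analyze the centralizer of \(\diag(ia_1,ia_2)\) in cases. If \(|a_1|\neq|a_2|\) and both are nonzero, the computation above shows the centralizer is \(\mathfrak t\). If one or both \(a_m\) vanish, or \(|a_1|=|a_2|\), the centralizer is larger: it contains \(\mathfrak{sp}(1)\oplus\mathfrak u(1)\) or \(\mathfrak u(2)\)-type pieces. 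In each such case, diagonalize \(Y_2\) inside that centralizer using a subgroup that fixes \(\diag(ia_1,ia_2)\).

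The main obstacle on this elementary route is that degenerate case analysis, particularly the case \(a_1=-a_2\), where the off-diagonal terms \(\alpha,\beta\) behave differently. For that reason I would present the proof via the conjugacy-of-maximal-tori argument, which is standard.
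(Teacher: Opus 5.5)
Your proposal is correct. Its main argument is the same as the paper's primary justification, which cites the conjugacy of maximal abelian subalgebras (equivalently, maximal tori) of $\mathfrak{sp}(2)$ to the diagonal one. Your computation that the centralizer of $\diag(i,2i)$ is exactly $\mathfrak t$ also makes explicit that $\exp\mathfrak t$ is a maximal torus, which the citation leaves implicit.

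The paper additionally gives a self-contained matrix proof. It follows the elementary route you set aside, but is organized so that no case analysis is needed:
\begin{enumerate}
\item It passes to $\C^4$ via $\rho(Z+Wj)$ and takes a common orthonormal eigenbasis of the commuting skew-Hermitian matrices $\rho(Y_1),\rho(Y_2)$.
\item It uses the antilinear map $\mathcal J$ to arrange this basis in pairs $\xi_1,\mathcal J\xi_1,\xi_2,\mathcal J\xi_2$. Here $\mathcal J$ commutes with the image of $\rho$, satisfies $\mathcal J^2=-I$, and sends joint eigenvalues $(ia,ib)$ to $(-ia,-ib)$.
\item The unitary matrix with columns $\xi_1,\xi_2,-\mathcal J\xi_1,-\mathcal J\xi_2$ is then $\rho(A)$ for some $A\in\Sp(2)$.
\end{enumerate}
In that formulation all degenerate spectra, including $a_1=-a_2$, are handled uniformly. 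Only the joint zero eigenspace needs a one-line remark, namely $\xi\perp\mathcal J\xi$ there. The paper's version trades the torus theorem for elementary linear algebra, while yours is shorter once the standard structure theory is granted.
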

	
	\begin{proof}
		
		This follows from the fact that every maximal
		abelian subalgebra of $\mathfrak{sp}(2)$ is conjugate to the diagonal one, see
		\cite[Theorem~11.9]{Hall}.
		We give a direct matrix proof here for completeness.
		
		Write a quaternionic matrix as \(Z+Wj\), with complex matrices \(Z,W\).
		The representation
		\[
		\rho(Z+Wj)=
		\begin{pmatrix}Z&W\\-\overline W&\overline Z\end{pmatrix}
		\]
		preserves products and conjugate transpose. Its image commutes with
		the anti-linear isometry
		\[
		\mathcal J(v,w)=(\overline w,-\overline v),\qquad
		\mathcal J^2=-I
		\]
		of \(\C^4\). The commuting skew-Hermitian matrices
		\(\rho(Y_1),\rho(Y_2)\) admit a common orthonormal eigenbasis:
		diagonalize the second matrix inside each eigenspace of the first.
		If \(\xi\) has joint eigenvalues \(ia,ib\), then \(\mathcal J\xi\)
		has joint eigenvalues \(-ia,-ib\).
		
		Choose the eigenbasis in orthonormal pairs
		\(\xi_1,\mathcal J\xi_1,\xi_2,\mathcal J\xi_2\).
		For a nonzero joint eigenvalue pair, choose a basis in one joint
		eigenspace and take its \(\mathcal J\)-images in the opposite one.
		In the common zero eigenspace, \(\xi\perp\mathcal J\xi\), and the
		orthogonal complement of their span is again \(\mathcal J\)-invariant;
		this gives the remaining pairs.
		Writing the first two columns as \((Z,-\overline W)^t\), the unitary
		matrix
		\[
		(\xi_1,\xi_2,-\mathcal J\xi_1,-\mathcal J\xi_2)
		=\rho(Z+Wj)
		\]
		is the image of a matrix \(A\in\Sp(2)\).
		It diagonalizes both matrices in the stated form.
	\end{proof}
	
	\begin{lemma}\label{s3:detail:H-invariance}
		Let \(x,y\) be \(B\)-orthonormal, and let \(O\in O(2)\).
		Set
		\[
		\widehat x=O_{11}x+O_{12}y,\qquad
		\widehat y=O_{21}x+O_{22}y.
		\]
		Then \(\widehat x,\widehat y\) are an orthonormal basis of the same
		plane, \(H(\widehat x,\widehat y)=H(x,y)\), and
		\[
		[\widehat x,\widehat y]=\det(O)[x,y],\qquad
		\zeta(\widehat x,\widehat y)=\det(O)\zeta(x,y).
		\]
		For a fixed \(g\), the equations \(J_g^*x=J_g^*y=0\) are also preserved.
	\end{lemma}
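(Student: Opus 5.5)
The lemma is proved by direct bilinear algebra, checking each claim separately.

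\textbf{Orthonormality.} The rows of \(O\) are orthonormal in \(\R^2\). Since \(x,y\) are \(B\)-orthonormal, the Gram matrix of \((\widehat x,\widehat y)\) is \(OO^t=I\). The new vectors lie in \(\Span\{x,y\}\), and \(O\) is invertible, so they form an orthonormal basis of the same plane.

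\textbf{Invariance of \(H\).} Set \(u_0=D^*x\) and \(v_0=D^*y\). Linearity of \(D^*\) gives \(D^*\widehat x=O_{11}u_0+O_{12}v_0\) and \(D^*\widehat y=O_{21}u_0+O_{22}v_0\). Consider the \(3\times2\) real matrix with columns \(u_0,v_0\). Passing to the new frame multiplies it on the right by \(O^t\). Its Frobenius norm squared is exactly \(H\), and this norm is unchanged under right multiplication by an orthogonal matrix. Equivalently, sum \(|O_{11}u_0+O_{12}v_0|^2+|O_{21}u_0+O_{22}v_0|^2\) using \(O^tO=I\): the cross terms \(2(O_{11}O_{12}+O_{21}O_{22})\,u_0\cdot v_0\) vanish.

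\textbf{Brackets.} Bilinearity and antisymmetry of the bracket give
\[
[\widehat x,\widehat y]=(O_{11}O_{22}-O_{12}O_{21})[x,y]=\det(O)[x,y].
\]
For \(\zeta=D^*a\) with \(a(x,y)=[Sx,y]+[x,Sy]\), note that \(a\) is bilinear. It is also antisymmetric: \(a(y,x)=[Sy,x]+[y,Sx]=-a(x,y)\). Hence \(a(x,x)=0\). Expanding \(a(\widehat x,\widehat y)\) bilinearly therefore gives \(\det(O)\,a(x,y)\). Applying the linear map \(D^*\) yields \(\zeta(\widehat x,\widehat y)=\det(O)\zeta(x,y)\).

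\textbf{Horizontality.} For fixed \(g\), the map \(J_g^*\) is linear. So \(J_g^*x=J_g^*y=0\) implies \(J_g^*\widehat x=J_g^*\widehat y=0\).

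The only step that is more than routine is recognizing that \(a\) is an antisymmetric bilinear form in \((x,y)\), so that it transforms by the determinant just as the bracket does. Everything else is immediate linear algebra.
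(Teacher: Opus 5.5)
Your proof is correct and follows essentially the same route as the paper: a Gram-matrix computation using $O^tO=I$ for the invariance of $H$ (the paper states it for an arbitrary real-linear map $L$ and takes $L=D^*$), alternating bilinearity of the bracket and of $a=[Sx,y]+[x,Sy]$ for the factor $\det(O)$, and linearity of $J_g^*$ for horizontality. Your explicit checks that $OO^t=I$ gives orthonormality and that $a$ is antisymmetric fill in details the paper leaves implicit.
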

	
	\begin{proof}
		For any real-linear map \(L\), orthogonality of \(O\) gives
		\[
		|L\widehat x|^2+|L\widehat y|^2
		=\sum_{j,k=1}^2(O^tO)_{jk}\langle Lx_j,Lx_k\rangle
		=|Lx|^2+|Ly|^2,
		\qquad x_1=x,\ x_2=y.
		\]
		Take \(L=D^*\) for the assertion about \(H\).
		The bracket and
		\(\zeta(x,y)=D^*([Sx,y]+[x,Sy])\) are alternating bilinear maps,
		so each transforms by \(\det(O)\).
		The horizontal equations are preserved by linearity of \(J_g^*\).
	\end{proof}
	
	\subsection{The moment bound}
	
	\begin{proposition}\label{rw:moment-bound}
		Suppose \((g,x,y)\in\mathcal I\) and \([x,y]=0\).
		Then
		\begin{equation}\label{s3:detail:H}
			\frac14\le H(x,y)\le\frac12.
		\end{equation}
	\end{proposition}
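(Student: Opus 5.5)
The plan is to put the commuting plane into a normal form using Lemma~\ref{s3:detail:simdiag} and Lemma~\ref{s3:detail:H-invariance}. Then I would evaluate \(H\) through the horizontal equation \(J_g^*x=0\), which by \eqref{rw:orbit-maps} and \eqref{s3:detail:adjoints} says \(D^*x=E_g^*x\). That is, \(u_0=\frac12(g^*xg)_{11}\), and likewise \(v_0=\frac12(g^*yg)_{11}\). The point is that horizontality lets me compute \(H\) entirely from the \(E_g\)-side. That side involves only the first row of a symplectic matrix, which is a unit vector.

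\textbf{Step 1: normal form.} Since \([x,y]=0\), Lemma~\ref{s3:detail:simdiag} gives \(A\in\Sp(2)\) with \(x,y\in A\,\mathfrak t\,A^*\), where \(\mathfrak t=\{\diag(ia_1,ia_2)\}\). Because \(x,y\) are independent and \(\mathfrak t\) is two-dimensional, the plane \(\Span\{x,y\}\) equals \(A\mathfrak t A^*\). By \eqref{bp}, \(|\diag(ia_1,ia_2)|^2=(a_1^2+a_2^2)/2\). Hence
\[
\widehat x=\sqrt2\,A\diag(i,0)A^*,\qquad \widehat y=\sqrt2\,A\diag(0,i)A^*
\]
is a \(B\)-orthonormal basis of the same plane. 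Conjugation preserves \(B\), so it is related to \((x,y)\) by some \(O\in O(2)\). By Lemma~\ref{s3:detail:H-invariance}, \(H\) and the horizontal equations are unchanged, so I may assume \(x=\widehat x\) and \(y=\widehat y\). Writing \(p,q\) for the two columns of \(A\), this means \(x=\sqrt2\,p\,i\,p^*\) and \(y=\sqrt2\,q\,i\,q^*\).

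\textbf{Step 2: compute \(u_0,v_0\) via horizontality.} Set \(w=g^*p\) and \(w'=g^*q\); these are the columns of \(g^*A\in\Sp(2)\). Then
\[
(g^*xg)_{11}=\sqrt2\,w_1 i\,\overline{w_1},\qquad (g^*yg)_{11}=\sqrt2\,w'_1 i\,\overline{w'_1}.
\]
For a quaternion \(w_1\), \(|w_1 i\overline{w_1}|=|w_1|^2\). The horizontal identities therefore give
\[
|u_0|=\tfrac{1}{\sqrt2}|w_1|^2,\qquad |v_0|=\tfrac1{\sqrt2}|w'_1|^2.
\]
Since \(g^*A\) is symplectic, its first row is a unit vector, so \(|w_1|^2+|w'_1|^2=1\).

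\textbf{Step 3: elementary bound.} With \(s=|w_1|^2\in[0,1]\), we get \(H=\frac12\bigl(s^2+(1-s)^2\bigr)\). On \([0,1]\) this has minimum \(1/4\) at \(s=1/2\) and maximum \(1/2\) at \(s\in\{0,1\}\), which is \eqref{s3:detail:H}.

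The only delicate step is Step~1. I need to confirm that \(x,y\) span exactly the conjugated torus \(A\mathfrak t A^*\), not merely lie in it. I also need the normalizing constant, so that the reference frame really is \(B\)-orthonormal and Lemma~\ref{s3:detail:H-invariance} applies with an orthogonal \(O\). Once that is in place, Steps 2 and 3 are direct computations: the quaternionic identities are \(|w i\bar w|=|w|^2\) and the unit first row of \(g^*A\).
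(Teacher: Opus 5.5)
Your proposal is correct and follows the paper's proof essentially step for step. Both arguments use the same normal form \(\widehat x=\sqrt2\,A\diag(i,0)A^*\), \(\widehat y=\sqrt2\,A\diag(0,i)A^*\) obtained from Lemmas~\ref{s3:detail:simdiag} and~\ref{s3:detail:H-invariance}, then the identity \(D^*=E_g^*\) on horizontal vectors, and finally the reduction to \(\frac12\bigl(t^2+(1-t)^2\bigr)\). The only difference is cosmetic: you use the first row of \(g^*A\) where the paper uses \(A^*ge\), and these are conjugate transposes of each other, so the two computations coincide.
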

	
	\begin{proof}
		By Lemma~\ref{s3:detail:simdiag}, the pair lies in a common
		two-dimensional diagonal torus conjugated by a matrix \(A\in\Sp(2)\).
		Since the pair is orthonormal, its real span is that entire torus.
		An orthogonal change of basis in this plane therefore gives
		\[
		\widehat x=A\diag(\sqrt2\,i,0)A^*,\qquad
		\widehat y=A\diag(0,\sqrt2\,i)A^*.
		\]
		Lemma~\ref{s3:detail:H-invariance} preserves both \(H\) and
		horizontality at the original point \(g\).
		
		Let \(e=(1,0)^t\in\HH^2\), let \(\mathbf w_g=ge\) be the first
		column of \(g\), and write
		\[
		(q_1,q_2)^t=A^*\mathbf w_g,\qquad q_1,q_2\in\HH.
		\]
		Unitarity gives \(|q_1|^2+|q_2|^2=1\).
		The equality \(D^*=E_g^*\) on horizontal vectors and
		\eqref{s3:detail:adjoints} imply
		\[
		D^*\widehat x=\frac1{\sqrt2}\overline q_1\,i\,q_1,\qquad
		D^*\widehat y=\frac1{\sqrt2}\overline q_2\,i\,q_2.
		\]
		Quaternionic norm is multiplicative. Hence, with
		\(t=|q_1|^2\in[0,1]\),
		\[
		H=\frac12(|q_1|^4+|q_2|^4)
		=\frac12\bigl(t^2+(1-t)^2\bigr)
		=\frac14+\left(t-\frac12\right)^2.
		\]
		This proves both bounds, including their endpoints.
	\end{proof}
	
	The diagonalization in this proof is used only to select a basis of the
	same commuting plane. The point \(g\) and its horizontal map \(J_g^*\)
	are kept fixed. We have therefore proved that the projection
	\((g,x,y)\mapsto(x,y)\) sends \(\mathcal Z\) into the algebraic set
	\begin{equation}\label{s3:hand:raw}
		\mathcal A=\left\{(x,y):
		|x|=|y|=1,\ B(x,y)=0,\ [x,y]=0,\ \zeta(x,y)=0,\
		\frac14\le H(x,y)\le\frac12\right\}.
	\end{equation}
	The next section proves the cubic inequality on all of \(\mathcal A\).
	Only this inclusion is required; an arbitrary pair in \(\mathcal A\)
	need not be realized as a horizontal pair at some point of \(G\).
	
	\section{The algebraic critical inequality}
	\label{s3:hand:critical}
	
	We prove the lower bound for \(\delta_3\) on the set \(\mathcal A\)
	defined in \eqref{s3:hand:raw}. Thus
	\(x=(r;u_0,u_1,u_2)\), \(y=(s;v_0,v_1,v_2)\) are
	\(B\)-orthonormal, \([x,y]=\zeta(x,y)=0\), and
	\(H=|u_0|^2+|v_0|^2\in[1/4,1/2]\).
	The bracket expressions and cubic scalar are defined in
	\eqref{s3:eq:coeffs}--\eqref{s3:eq:delta}, with
	\(S=S_3\). No group point or horizontal map is used in the
	algebraic argument below.
	
	There are four reductions: the real coordinates vanish; the six
	imaginary vectors lie in a common plane; a Hermitian matrix gives
	relations among their scalar invariants; and the cubic coefficient
	becomes a sum of nonnegative terms with a positive lower bound.
	
	For later substitutions, quaternion multiplication gives
	\begin{equation}\label{s3:eq:bracket}
		\begin{aligned}
			\tfrac12[x,y]_r&=u_2\cdot v_1-u_1\cdot v_2,\\
			\tfrac12[x,y]_0&=\sum_{j=0}^2u_j\times v_j,\\
			\tfrac12[x,y]_1&=u_0\times v_1+u_1\times v_0+rv_2-su_2,\\
			\tfrac12[x,y]_2&=u_0\times v_2+u_2\times v_0+su_1-rv_1.
		\end{aligned}
	\end{equation}
	The subscripts denote the four coordinate blocks in
	\eqref{rw:coordinates}; the first is real and the other three are
	vectors in \(\im\HH\). Substitution of \(Sx,Sy\) into the block
	numbered \(0\) gives
	\begin{equation}\label{s3:detail:force}
		\zeta(x,y)=4\bigl\{(u_0-u_2)\times v_1+
		u_1\times(v_0-v_2)\bigr\}.
	\end{equation}
	In particular, \(\zeta\) does not depend on the real coordinates.
	
	\subsection{The real coordinates vanish}
	\label{s3:scalar:section}
	
	We first record the spectral alternatives used in the proof.
	
	\begin{lemma}\label{s3:detail:centralizer}
		Let \(Z\in\mathfrak{sp}(2)\) have \(B(Z,Z)=1\).
		Exactly one of the following occurs:
		\begin{enumerate}
			\item \(Z\) is conjugate to \(\diag(i\lambda_1,i\lambda_2)\) with
			\(\lambda_1\lambda_2\ne0\) and
			\(|\lambda_1|\ne|\lambda_2|\). Its centralizer is
			\(\Span_{\R}\{Z,Z^3\}\).
			\item \(Z^2=-I\).
			\item \(Z\) has quaternionic rank one and \(Z^3=-2Z\).
		\end{enumerate}
	\end{lemma}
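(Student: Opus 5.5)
The plan is to reduce everything to the diagonal normal form of Lemma~\ref{s3:detail:simdiag}. Applying it with \(Y_1=Y_2=Z\) gives \(A\in\Sp(2)\) and real \(\lambda_1,\lambda_2\) with \(Z=A\diag(i\lambda_1,i\lambda_2)A^*\). Conjugation by \(A\) preserves \(B\), quaternionic rank, powers of \(Z\) and centralizers (the latter are transported by \(\Ad_A\)). So it suffices to treat \(Z_0=\diag(i\lambda_1,i\lambda_2)\). The normalization \(B(Z,Z)=-\tfrac12\Rea\tr_{\HH}(Z_0^2)=\tfrac12(\lambda_1^2+\lambda_2^2)=1\) gives \(\lambda_1^2+\lambda_2^2=2\); this is consistent with the unit vectors \(\diag(\sqrt2\,i,0)\) used in Proposition~\ref{rw:moment-bound}.

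Next I split according to \((\lambda_1,\lambda_2)\) on this circle, which gives exactly three mutually exclusive possibilities.
\begin{itemize}
\item If one of them vanishes, say \(\lambda_1=0\), then \(\lambda_2=\pm\sqrt2\). Hence \(Z_0\) has quaternionic rank one, and \((i\lambda_2)^3=-\lambda_2^2\,(i\lambda_2)=-2\,i\lambda_2\) gives \(Z^3=-2Z\). This is case~3.
\item If \(\lambda_1\lambda_2\ne0\) and \(|\lambda_1|=|\lambda_2|\), then \(\lambda_1^2=\lambda_2^2=1\), so \((i\lambda_j)^2=-1\) and \(Z^2=-I\). This is case~2.
\item The remaining possibility, \(\lambda_1\lambda_2\ne0\) with \(|\lambda_1|\ne|\lambda_2|\), is case~1.
\end{itemize}
Exclusivity follows from invariants. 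Case~3 has rank one, while cases~1 and~2 have rank two. In case~1, \(Z_0^2=\diag(-\lambda_1^2,-\lambda_2^2)\ne-I\), so cases~1 and~2 are disjoint. The conjugacy class of \(Z\) does not depend on the choice of \(A\), since the rank and the spectral data \(\{\lambda_j^2\}\) are determined by \(Z^2\).

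The only substantive point is the centralizer in case~1. Suppose \(W\in\mathfrak{sp}(2)\) commutes with \(Z_0\), and write \(W=\begin{pmatrix}p&w\\-\bar w&q\end{pmatrix}\).
\begin{itemize}
\item The diagonal entries satisfy \(i\lambda_j p=p\,i\lambda_j\) with \(\lambda_j\ne0\), so \(p\) and \(q\) are imaginary quaternions commuting with \(i\). Hence \(p,q\in\R i\).
\item The off-diagonal entry satisfies \(\lambda_1 i w=\lambda_2 w i\). Taking quaternionic norms gives \(|\lambda_1||w|=|\lambda_2||w|\), so \(w=0\) because \(|\lambda_1|\ne|\lambda_2|\).
\end{itemize}
Thus the centralizer is the two-dimensional diagonal torus \(\{\diag(ia,ib)\}\). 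It contains \(Z_0\) and \(Z_0^3=\diag(-i\lambda_1^3,-i\lambda_2^3)\). These two are linearly independent because
\[
\det\begin{pmatrix}\lambda_1&\lambda_2\\ \lambda_1^3&\lambda_2^3\end{pmatrix}=\lambda_1\lambda_2(\lambda_2^2-\lambda_1^2)\ne0.
\]
Hence they span the centralizer, and conjugating back by \(A\) gives \(\Span_{\R}\{Z,Z^3\}\).

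I expect no real obstacle. The one place needing care is the off-diagonal commutation equation, where noncommutativity of \(\HH\) is handled by the norm argument rather than by solving the equation for \(w\).
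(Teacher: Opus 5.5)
Your proposal is correct and follows essentially the same route as the paper: you diagonalize via Lemma~\ref{s3:detail:simdiag}, use \(\lambda_1^2+\lambda_2^2=2\) to split into the three cases, get the centralizer in case~1 from the entrywise commutation equations (with the norm argument killing the off-diagonal entry), and show that \(Z,Z^3\) span it by the same determinant. Your explicit check that the three cases are mutually exclusive, using rank and \(Z^2\), is a small addition that the paper leaves implicit.
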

	
	\begin{proof}
		Lemma~\ref{s3:detail:simdiag} diagonalizes \(Z\).
		Its normalization gives \(\lambda_1^2+\lambda_2^2=2\).
		Equal absolute values therefore give the second case, and one zero
		eigenvalue gives the third.
		In the first case, a commuting matrix
		\(\left(\begin{smallmatrix}a&q\\-\overline q&b\end{smallmatrix}\right)\)
		satisfies
		\[
		[i\lambda_1,a]=[i\lambda_2,b]=0,\qquad
		i\lambda_1q=qi\lambda_2.
		\]
		Taking norms in the last equation gives \(q=0\); the first two make
		\(a,b\) real multiples of \(i\).
		The centralizer is therefore the two-dimensional diagonal torus.
		The determinant
		\[
		\det\begin{pmatrix}\lambda_1&-\lambda_1^3\\
			\lambda_2&-\lambda_2^3\end{pmatrix}
		=\lambda_1\lambda_2(\lambda_1^2-\lambda_2^2)
		\]
		is nonzero, so \(Z,Z^3\) span it.
	\end{proof}
	
	\begin{lemma}\label{s3:hand:scalar}
		Suppose \(x=(r;u_i)\), \(y=(s;v_i)\) are \(B\)-orthonormal,
		\([x,y]=\zeta(x,y)=0\), and \(H(x,y)<1\).
		Then \(r=s=0\).
	\end{lemma}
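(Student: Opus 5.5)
The plan is to use the rotation freedom of Lemma~\ref{s3:detail:H-invariance} to put the real coordinates into normal form, and then argue by contradiction from the scalar and vector components of the equations \([x,y]=0\) and \(\zeta=0\). The functional \((r;u_0,u_1,u_2)\mapsto r\) is linear on the plane \(\Span\{x,y\}\). So after an orthogonal change of frame, which preserves orthonormality, \(H\), and (up to sign) the equations \([x,y]=0\) and \(\zeta=0\), we may assume \(s=0\). It then remains to show \(r=0\). Suppose instead that \(r\ne0\).

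With \(s=0\), formulas \eqref{s3:eq:bracket} and \eqref{s3:detail:force} give the following system:
\[
\begin{gathered}
u_2\cdot v_1=u_1\cdot v_2,\qquad \textstyle\sum_j u_j\times v_j=0,\\
u_0\times v_1+u_1\times v_0+rv_2=0,\qquad u_0\times v_2+u_2\times v_0-rv_1=0,\\
(u_0-u_2)\times v_1+u_1\times(v_0-v_2)=0.
\end{gathered}
\]
The goal is to show \(v_1=v_2=0\). I will take suitable dot products of the two \(r\)-equations with \(v_1\) and \(v_2\), and of the \(\zeta\)-equation and the block-\(0\) equation with \(v_0,v_1,v_2\). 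These produce scalar identities in which \(r|v_1|^2\), \(r|v_2|^2\) and \(r\,v_1\cdot v_2\) are expressed through triple products \(\det(u_i,v_j,v_k)\). The \(\zeta\)-equation is exactly what should let the \(u_2\)-triple products be traded for \(u_0\)- and \(u_1\)-triple products, so that they cancel in a combination such as \(v_1\cdot(\text{first})-v_2\cdot(\text{second})\), leaving \(r(|v_1|^2+|v_2|^2)=0\).

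Once \(v_1=v_2=0\), the vector \(y=(0;v_0,0,0)=D(v_0)\) is a unit vector, so \(|v_0|=1\) and \(H\ge|v_0|^2=1\). This contradicts the hypothesis \(H<1\), and hence \(r=0\).

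The main obstacle is the cancellation step. If the triple products do not cancel cleanly, I would switch to a spectral argument via Lemma~\ref{s3:detail:centralizer}. Pick a generic element \(Z\) of the plane. In case~1 the plane equals \(\Span\{Z,Z^3\}\) and is a conjugated diagonal torus \(A\,\mathfrak t_0A^*\), parametrized by \(A\in\Sp(2)\). The conditions \(\zeta=0\) and \(r\)-component \(\ne0\) then become polynomial conditions on the first column of \(A\), and \(H\) can be computed as in Proposition~\ref{rw:moment-bound}; the aim is to show that these force \(H=1\). Cases 2 and 3 are degenerate and would be checked directly: \(Z^2=-I\), or \(Z\) has rank one.
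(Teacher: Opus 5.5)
Your elimination strategy works, and it is a genuinely different route from the paper's. However, the cancellation does not happen in a single combination as you guessed.

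\begin{itemize}
\item The pairing $v_1\cdot(\text{first})-v_2\cdot(\text{second})$ produces $2r\,v_1\cdot v_2$, not squared norms.
\item The natural pairing $v_2\cdot(\text{first})-v_1\cdot(\text{second})$ reduces, via the other dotted identities, only to $r(|v_1|^2+|v_2|^2)=-\det(u_0,v_0,v_1)=-r\,v_0\cdot v_2$. This vanishes only once you already know $v_1=0$.
\end{itemize}

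The argument closes in two steps. With $s=0$, write $E_\nu=\tfrac12[x,y]_\nu$ for the three imaginary blocks $\nu=0,1,2$, and put $Z:=\tfrac14\zeta$. Two facts are needed: $E_2-E_0=(u_0-u_2)\times(v_2-v_0)-u_1\times v_1-rv_1$, and $(v_2-v_0)\cdot\bigl(u_1\times(v_0-v_2)\bigr)=0$. With these,
\[
0=v_1\cdot(E_2-E_0)+(v_2-v_0)\cdot Z
=\det(v_1,u_0-u_2,v_2-v_0)+\det(v_2-v_0,u_0-u_2,v_1)-r|v_1|^2
=-r|v_1|^2 .
\]
Since $r\ne0$, this gives $v_1=0$. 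Next, $E_1-Z=u_2\times v_1+u_1\times v_2+rv_2$, so
\[
0=v_2\cdot(E_1-Z)=\det(v_2,u_2,v_1)+r|v_2|^2=r|v_2|^2 ,
\]
and hence $v_2=0$. Your final step ($|v_0|=1$, hence $H\ge1$) then finishes the proof.

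Comparison with the paper: after the same normalization $r\ne0$, $s=0$, the paper splits by the spectral type of $y$ via Lemma~\ref{s3:detail:centralizer}.
\begin{itemize}
\item In the regular case it writes $x=\lambda y+\mu y^3$ and uses $\zeta(y,y^3)=0$ with the cube formula \eqref{s3:scalar:cube}.
\item For $y^2=-I$ it shows $v_1=v_2=0$ in adapted coordinates.
\item In the rank-one case it parametrizes $v_0,v_1,v_2$ by $\theta$ and runs through several subcases.
\end{itemize}

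Your identity replaces all of this.
\begin{itemize}
\item It uses only the imaginary blocks of $[x,y]=0$ together with $\zeta=0$. Neither the real block $u_2\cdot v_1=u_1\cdot v_2$ nor orthonormality is needed before the final $|y|=1$.
\item It makes Lemma~\ref{s3:detail:centralizer} and \eqref{s3:scalar:square}--\eqref{s3:scalar:cube} unnecessary here.
\item It gives the sharper conclusion that $r\ne0=s$ forces $y=(0;v_0,0,0)$.
\end{itemize}
The pair $x=(1;0,0,0)$, $y=(0;v_0,0,0)$ with $|v_0|=1$ satisfies every other hypothesis with $H=1$, so the hypothesis $H<1$ cannot be dropped. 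The paper's case split is organized around the conjugacy type of $y$, which fits the diagonalization used elsewhere, but for this lemma it is much longer. Your spectral fallback is not needed.
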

	
	\begin{proof}
		If \(R_0=(r^2+s^2)^{1/2}>0\), replace the pair by
		\[
		x'=\frac{rx+sy}{R_0},\qquad
		y'=\frac{-sx+ry}{R_0}.
		\]
		Lemma~\ref{s3:detail:H-invariance} preserves all hypotheses.
		Relabeling the pair, we may assume
		\begin{equation}\label{s3:scalar:normalize}
			r\ne0,\qquad s=0.
		\end{equation}
		We obtain a contradiction in each case of
		Lemma~\ref{s3:detail:centralizer} applied to \(y\).
		
		We will use two matrix-power identities. Since \(|y|=1\) and
		\(y=(0;v_0,v_1,v_2)\), direct multiplication gives
		\begin{equation}\label{s3:scalar:square}
			y^2=
			\begin{pmatrix}
				-1-2v_0\cdot v_1&-2v_0\cdot v_2+2v_1\times v_2\\
				-2v_0\cdot v_2-2v_1\times v_2&-1+2v_0\cdot v_1
			\end{pmatrix}.
		\end{equation}
		Writing \(y^3=(r_3;\widetilde v_0,\widetilde v_1,\widetilde v_2)\),
		one further multiplication gives
		\begin{equation}\label{s3:scalar:cube}
			\begin{aligned}
				r_3&=-2v_0\cdot(v_1\times v_2),\\
				\widetilde v_0&=-v_0-2(v_0\cdot v_1)v_1-2(v_0\cdot v_2)v_2,\\
				\widetilde v_1&=-2(v_0\cdot v_1)v_0
				-(1+2|v_2|^2)v_1+2(v_1\cdot v_2)v_2,\\
				\widetilde v_2&=-2(v_0\cdot v_2)v_0
				+2(v_1\cdot v_2)v_1-(1+2|v_1|^2)v_2.
			\end{aligned}
		\end{equation}
		The scalar entry is obtained from the real part of the upper-right
		entry; the vector entries follow from
		\(uv=-u\cdot v+u\times v\) and the vector triple-product identity.
		
		\emph{Regular case.}
		Here \(x=\lambda y+\mu y^3\) for real \(\lambda,\mu\).
		Since \(x\) has nonzero real coordinate and \(y\) has zero real
		coordinate, \eqref{s3:scalar:cube} implies
		\(\mu\ne0\) and
		\(\Delta=v_0\cdot(v_1\times v_2)\ne0\).
		The alternating property of \(\zeta\) gives
		\(\zeta(y,y^3)=0\).
		Substituting \eqref{s3:scalar:cube} into
		\eqref{s3:detail:force} yields
		\begin{equation}\label{s3:scalar:forcingcube}
			\begin{aligned}
				\frac18\zeta(y,y^3)={}&
				-(|v_2|^2+v_0\cdot v_2)\,v_0\times v_1\\
				&+(v_1\cdot v_2-v_0\cdot v_1)\,v_0\times v_2\\
				&-(|v_2|^2-|v_1|^2+v_0\cdot v_2)\,v_1\times v_2.
			\end{aligned}
		\end{equation}
		Dotting this zero vector first with \(v_2\), then with \(v_0\),
		and using \(\Delta\ne0\), gives
		\[
		|v_2|^2+v_0\cdot v_2=0,\qquad
		|v_2|^2-|v_1|^2+v_0\cdot v_2=0.
		\]
		Subtracting forces \(v_1=0\), contrary to \(\Delta\ne0\).
		
		\emph{The case \(y^2=-I\).}
		Equation~\eqref{s3:scalar:square} gives
		\(v_0\cdot v_1=v_0\cdot v_2=0\) and \(v_1\times v_2=0\).
		Choose an oriented orthonormal basis \(i,j,k\) of \(\im\HH\) such that
		\[
		v_0=di,\qquad v_1=ej,\qquad v_2=fj,\qquad
		u_\nu=\alpha_\nu i+\beta_\nu j+\gamma_\nu k.
		\]
		This includes all vanishing-coordinate cases.
		The \(j\)-components of \([x,y]_1/2=0\) and
		\(\zeta(x,y)/4=0\), respectively, give
		\[
		d\gamma_1+rf=0,\qquad d\gamma_1=0.
		\]
		Thus \(f=0\). The \(i\)-components of the same equations now give
		\(-e\gamma_0=0\) and \(-e\gamma_0+e\gamma_2=0\).
		If \(e\ne0\), then \(\gamma_0=\gamma_2=0\), whereas the
		\(j\)-component of \([x,y]_2/2=0\) gives
		\(d\gamma_2-re=0\), a contradiction.
		Thus \(e=f=0\). Normalization gives \(|v_0|=1\), contradicting
		\(H<1\).
		
		\emph{The rank-one case.}
		Now \(y^3=-2y\). Equation~\eqref{s3:scalar:cube} gives
		\(\det(v_0,v_1,v_2)=0\), so the three vectors lie in a common plane.
		Choose coordinates
		\[
		v_0=di,\qquad v_1=ei+\sigma j,\qquad v_2=fi+\tau j,
		\qquad d\ge0,\qquad
		u_\nu=\alpha_\nu i+\beta_\nu j+\gamma_\nu k.
		\]
		If \(v_0=0\), choose any plane containing \(v_1,v_2\) and set \(d=0\).
		
		The two eigenvalues of \(y^2\) are \(-2,0\), whence
		\(\frac12\Rea\tr_{\HH}(y^4)=2\).
		Squaring \eqref{s3:scalar:square} and taking its real trace gives
		\[
		\frac12\Rea\tr_{\HH}(y^4)
		=1+4\bigl\{(v_0\cdot v_1)^2+(v_0\cdot v_2)^2
		+|v_1\times v_2|^2\bigr\}.
		\]
		Together with normalization, this becomes
		\[
		d^2+e^2+f^2+\sigma^2+\tau^2=1,\qquad
		d^2(e^2+f^2)+(e\tau-f\sigma)^2=\frac14.
		\]
		For the real vectors \(\mathbf a=(d,\sigma,\tau)\),
		\(\mathbf b=(0,e,f)\), these equations say
		\(|\mathbf a|^2+|\mathbf b|^2=1\) and
		\(|\mathbf a\times\mathbf b|^2=1/4\).
		Equality holds throughout
		\[
		|\mathbf a\times\mathbf b|^2\le|\mathbf a|^2|\mathbf b|^2
		\le\frac14(|\mathbf a|^2+|\mathbf b|^2)^2.
		\]
		Hence \(|\mathbf a|^2=|\mathbf b|^2=1/2\) and
		\(\mathbf a\cdot\mathbf b=0\).
		There is a real number \(\theta\) such that
		\begin{equation}\label{s3:scalar:rankoneparameters}
			e^2+f^2=\frac12,\qquad
			\sigma=-\theta f,\quad \tau=\theta e,\qquad
			d^2=\frac{1-\theta^2}{2},\quad |\theta|\le1.
		\end{equation}
		
		For clarity, the components of the equations used next are
		\begin{equation}\label{s3:scalar:singularbrackets}
			\begin{aligned}
				-\sigma\gamma_1-\tau\gamma_2&=0,
				&d\gamma_0+e\gamma_1+f\gamma_2&=0,\\
				-\sigma\gamma_0+rf&=0,
				&e\gamma_0+d\gamma_1+r\tau&=0,\\
				-\tau\gamma_0-re&=0,
				&f\gamma_0+d\gamma_2-r\sigma&=0.
			\end{aligned}
		\end{equation}
		These are the \(i,j\)-components of the three imaginary blocks of
		\([x,y]/2=0\). The corresponding components of \(\zeta/4=0\) are
		\begin{equation}\label{s3:scalar:singularforcing}
			-\sigma\gamma_0+\tau\gamma_1+\sigma\gamma_2=0,\qquad
			e\gamma_0+(d-f)\gamma_1-e\gamma_2=0.
		\end{equation}
		Substituting \eqref{s3:scalar:rankoneparameters} into the left
		equations in the last two rows of
		\eqref{s3:scalar:singularbrackets} gives
		\[
		f(\theta\gamma_0+r)=0,\qquad
		-e(\theta\gamma_0+r)=0.
		\]
		Since \(e^2+f^2=1/2\), we obtain
		\(\theta\gamma_0=-r\). In particular,
		\begin{equation}\label{s3:scalar:tnonzero}
			\theta\ne0,\qquad \gamma_0=-r/\theta\ne0.
		\end{equation}
		
		If \(d=0\), the first row of
		\eqref{s3:scalar:singularbrackets} is
		\[
		\theta f\gamma_1-\theta e\gamma_2=0,\qquad
		e\gamma_1+f\gamma_2=0.
		\]
		Its determinant is \(\theta(e^2+f^2)=\theta/2\ne0\), so
		\(\gamma_1=\gamma_2=0\).
		Equation~\eqref{s3:scalar:singularforcing} then gives
		\(\sigma\gamma_0=e\gamma_0=0\), and hence \(\sigma=e=0\).
		Since \(\sigma=-\theta f\), also \(f=0\), a contradiction.
		
		Suppose \(d>0\). The right equations in the last two rows of
		\eqref{s3:scalar:singularbrackets}, together with
		\(1-\theta^2=2d^2\), give
		\[
		\gamma_1=\frac{2red}{\theta},\qquad
		\gamma_2=\frac{2rfd}{\theta}.
		\]
		Substituting into the forcing equations produces
		\[
		\begin{aligned}
			0&=-rf+2re^2d-2rf^2d
			=r\{-f+d(1-4f^2)\},\\
			0&=\frac{re}{\theta}(-1+2d^2-4df)
			=-\frac{re}{\theta}(\theta^2+4df).
		\end{aligned}
		\]
		Division by \(r\ne0\) and \(\theta\ne0\) is legitimate, giving
		\begin{equation}\label{s3:scalar:finalequations}
			f=d(1-4f^2),\qquad e(\theta^2+4df)=0.
		\end{equation}
		If \(e=0\), then \(f^2=1/2\) and the first equation gives \(f=-d\).
		Thus \(d^2=1/2\), forcing \(\theta=0\), a contradiction.
		If \(e\ne0\), then \(\theta^2=-4df\), and the first equation gives
		\[
		f=d+\theta^2f,\qquad f(1-\theta^2)=d.
		\]
		Since \(d>0\) and \(1-\theta^2=2d^2>0\), this implies \(f>0\),
		contradicting \(\theta^2=-4df\).
		All cases contradict \eqref{s3:scalar:normalize}.
	\end{proof}
	
	Only \(H<1\) was needed for the scalar-coordinate lemma. In particular
	it applies to every pair in \(\mathcal A\); the lower bound
	\(H\ge1/4\) will enter only in the final positivity estimate.
	
	\subsection{A common plane for the imaginary components}
	
	The scalar-coordinate lemma gives \(r=s=0\) on \(\mathcal A\).
	Define
	\[
	F_{ij}=u_i\times v_j+u_j\times v_i\qquad(i\ne j).
	\]
	The imaginary blocks numbered \(1,2\) of \([x,y]=0\) give
	\(F_{01}=F_{02}=0\), and \eqref{s3:detail:force} gives
	\(F_{12}=0\). The remaining blocks of the commutator give
	\begin{equation}\label{s3:hand:cross}
		F_{01}=F_{02}=F_{12}=0,\qquad
		\sum_{i=0}^2u_i\times v_i=0,\qquad
		u_1\cdot v_2=u_2\cdot v_1.
	\end{equation}
	
	\begin{lemma}\label{s3:hand:physical}
		For every \((x,y)\in\mathcal A\), the six vectors
		\(u_0,u_1,u_2,v_0,v_1,v_2\) lie in a common real subspace of
		\(\im\HH\) of dimension at most two.
	\end{lemma}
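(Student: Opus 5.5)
Since the conclusion concerns only the real span of the six imaginary vectors, I would first make a frame change that puts one element of the pair in general position, and then use the centralizer description of Lemma~\ref{s3:detail:centralizer}. The key observation is that the span \(\Span_{\R}\{u_0,u_1,u_2,v_0,v_1,v_2\}\) depends only on the plane \(\Span_{\R}\{x,y\}\), not on the chosen orthonormal basis. Indeed, if \(\widehat x=O_{11}x+O_{12}y\) and \(\widehat y=O_{21}x+O_{22}y\) with \(O\in O(2)\), then each block component of \(\widehat x,\widehat y\) is a real combination of \(u_\nu,v_\nu\), and conversely. By Lemma~\ref{s3:hand:scalar}, which applies since \(H\le 1/2<1\), we have \(r=s=0\). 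Hence every element of the plane has zero real coordinate.

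Next I would identify the plane. Since \([x,y]=0\), Lemma~\ref{s3:detail:simdiag} gives \(A\in\Sp(2)\) with \(x,y\in A\,\diag(i\R,i\R)A^*\). As in the proof of Proposition~\ref{rw:moment-bound}, orthonormality forces \(\Span_{\R}\{x,y\}\) to be this whole two-dimensional torus. The regular elements \(A\diag(ia_1,ia_2)A^*\) with \(a_1a_2\ne0\) and \(|a_1|\ne|a_2|\) form an open dense subset of the torus, so one of them has unit norm. Using an \(O(2)\) change of frame (Lemma~\ref{s3:detail:H-invariance}), I may therefore assume that \(y\) is a unit vector in case~1 of Lemma~\ref{s3:detail:centralizer}, with \(x\perp y\) still in the same plane.

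In this regular case the centralizer of \(y\) is \(\Span_{\R}\{y,y^3\}\), so \(x=\lambda y+\mu y^3\). Here \(\mu\ne0\), since otherwise \(x\) would be a multiple of \(y\), contradicting orthonormality. The real coordinate of \(x\) is \(\mu r_3\), where by \eqref{s3:scalar:cube} \(r_3=-2\,v_0\cdot(v_1\times v_2)\). Because \(x\) has zero real coordinate, we get \(v_0\cdot(v_1\times v_2)=0\). Thus \(v_0,v_1,v_2\) lie in a common real plane \(\Pi\subset\im\HH\). (If they span a line or zero, I take any plane containing them.)

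Finally, \eqref{s3:scalar:cube} shows that each \(\widetilde v_\nu\) is a real linear combination of \(v_0,v_1,v_2\), so \(\widetilde v_\nu\in\Pi\). Consequently \(u_\nu=\lambda v_\nu+\mu\widetilde v_\nu\in\Pi\) for \(\nu=0,1,2\). Undoing the frame change does not alter the span, which proves the lemma.

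The step I expect to need the most care is the reduction to a regular \(y\). It requires the real span of the orthonormal commuting pair to be exactly the maximal torus, so that it contains unit regular elements. It also requires that frame changes preserve both the span of the components and the vanishing of the real coordinates. Both facts follow from linearity together with the earlier lemmas. Note that this argument needs neither the cross-product relations \eqref{s3:hand:cross} nor \(\zeta=0\) directly; those enter only through Lemma~\ref{s3:hand:scalar}.
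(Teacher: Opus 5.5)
Your argument is correct, and it takes a different route from the paper.

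\textbf{The paper's route.} The paper works directly with the bilinear relations \(F_{01}=F_{02}=F_{12}=0\) of \eqref{s3:hand:cross}. The last of these comes from \(\zeta=0\) once \(r=s=0\). It then splits into cases according to \(\dim\Span_{\R}\{u_0,u_1,u_2\}\in\{1,2,3\}\). In dimension three, the coefficient table in the basis of cross products forces \(y=tx\). In dimension two, the normal components \(\gamma_i\) of the \(v_i\) are forced to vanish. Dimension one is handled by projecting to \(u^\perp\).

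\textbf{Your route.} You instead reuse the torus/centralizer machinery from the regular case of Lemma~\ref{s3:hand:scalar}. After an orthogonal frame change to a unit regular generator \(y\) of the commuting torus, you get \(x=\lambda y+\mu y^3\) with \(\mu\ne0\). The vanishing real coordinate of \(x\) then kills \(v_0\cdot(v_1\times v_2)\). The cube formula \eqref{s3:scalar:cube} places every \(u_\nu\) in \(\Span_{\R}\{v_0,v_1,v_2\}\). The steps check out:
\begin{itemize}
\item the span of the six components is frame-invariant;
\item unit regular elements exist, since only eight points of the unit circle in the torus are excluded;
\item \eqref{s3:scalar:cube} applies, because the new \(y\) is a unit vector with zero real coordinate.
\end{itemize}

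\textbf{What each approach buys.} Your version avoids the case analysis. It also proves a slightly stronger fact: coplanarity holds for every orthonormal commuting pair with vanishing real coordinates, and \(\zeta=0\) enters only through Lemma~\ref{s3:hand:scalar}. The paper's version needs no diagonalization, centralizer computation or matrix-cube formula. It uses exactly the relations \eqref{s3:hand:cross} that the next subsection needs anyway to build the complex frame and the matrix \(K\).
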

	
	\begin{proof}
		We distinguish the dimension of \(\Span_{\R}\{u_0,u_1,u_2\}\).
		
		If the dimension is three, write \(v_i=\sum_jm_{ij}u_j\). Since
		\[\det(u_0\times u_1,u_0\times u_2,u_1\times u_2)=
		\left(\det(u_0,u_1,u_2)\right)^2\neq0.
		\]
		We know that the cross products
		\(u_0\times u_1,u_0\times u_2,u_1\times u_2\) are independent.
		The coefficients of the three equations \(F_{ij}=0\) in this basis are
		\[
		\begin{array}{c|ccc}
			&u_0\times u_1&u_0\times u_2&u_1\times u_2\\ \hline
			F_{01}&m_{11}-m_{00}&m_{12}&m_{02}\\
			F_{02}&m_{21}&m_{22}-m_{00}&-m_{01}\\
			F_{12}&-m_{20}&-m_{10}&m_{22}-m_{11}
		\end{array}
		\]
		Thus \(m_{ij}=t\delta_{ij}\) for some real \(t\), and \(y=tx\),
		contrary to orthonormality.
		
		If the dimension is two, let \(n\) be a unit normal to this plane
		and write \(v_i=v_i^\parallel+\gamma_i n\).
		The component of \(F_{ij}=0\) in the plane is
		\[
		(\gamma_j u_i+\gamma_i u_j)\times n=0.
		\]
		Cross product with \(n\) is invertible on the plane, so
		\(\gamma_j u_i+\gamma_i u_j=0\).
		Choose independent \(u_i,u_j\). Their equation forces
		\(\gamma_i=\gamma_j=0\); pairing a nonzero one of these rows with
		the remaining row then gives the remaining \(\gamma_k=0\).
		All \(v_i\) lie in the same plane.
		
		If the dimension is one, write \(u_i=\alpha_i u\) with \(u\ne0\)
		and choose \(i\) such that \(\alpha_i\ne0\).
		For \(j\ne i\), the relation \(F_{ij}=0\) gives
		\[
		u\times(\alpha_i v_j+\alpha_j v_i)=0,\qquad
		v_j^\perp=-\frac{\alpha_j}{\alpha_i}v_i^\perp,
		\]
		where the superscript means projection to \(u^\perp\).
		All six vectors lie in
		\(\Span_{\R}\{u,v_i^\perp\}\).
		Dimension zero is impossible because \(r=0\) and \(|x|=1\).
	\end{proof}
	
	Choose an oriented Euclidean two-plane containing the six vectors,
	and an oriented orthonormal basis in it. Write their components as
	\(u_i=(u_{i1},u_{i2})\), \(v_i=(v_{i1},v_{i2})\), and put
	\[
	\omega(a,b)=a_1b_2-a_2b_1,\qquad
	z_i=u_{i1}+i u_{i2},\qquad w_i=v_{i1}+i v_{i2}.
	\]
	Here \(z,w\) are complex column vectors in \(\C^3\).
	If \(n\) is the oriented unit normal, then
	\(u_i\times v_j=\omega(u_i,v_j)n\).
	Normalization and \eqref{s3:hand:cross} give
	\begin{equation}\label{rw:complex-frame}
		z^*z=w^*w=1,\qquad
		z^*w=\sum_i u_i\cdot v_i+i\sum_i\omega(u_i,v_i)=0.
	\end{equation}
	Thus the real orthonormality of \(x,y\), together with the remaining
	cross-product equation, produces a complex orthonormal frame.
	
	\subsection{The scalar invariants and their relations}
	
	The complex frame permits us to collect the constraints in a Hermitian
	matrix with fixed spectrum. Define real numbers
	\[
	p=u_0\cdot v_1-u_1\cdot v_0,\qquad
	q=u_0\cdot v_2-u_2\cdot v_0,\qquad
	t_i=-2\omega(u_i,v_i)\quad(i=0,1,2),
	\]
	and the Hermitian operator
	\[
	K=-i(zw^*-wz^*)\quad\text{on }\C^3.
	\]
	The star here is complex conjugate transpose.
	Entrywise,
	\[
	(zw^*-wz^*)_{ij}
	=u_i\cdot v_j-u_j\cdot v_i
	-i\{\omega(u_i,v_j)+\omega(u_j,v_i)\}.
	\]
	The off-diagonal area terms vanish by \(F_{ij}=0\), the entry
	numbered \(12\) vanishes by \(u_1\cdot v_2=u_2\cdot v_1\), and
	the diagonal entries are \(it_i\). Hence
	\begin{equation}\label{s3:hand:K}
		K=\begin{pmatrix}
			t_0&-ip&-iq\\ ip&t_1&0\\ iq&0&t_2
		\end{pmatrix}.
	\end{equation}
	
	\begin{lemma}\label{rw:scalar-relations}
		The spectrum of \(K\) is \(\{-1,0,1\}\), and
		\begin{align}
			t_0+t_1+t_2&=0,\label{s3:hand:trace}\\
			t_0^2+t_1^2+t_2^2+2(p^2+q^2)&=2,\label{s3:hand:norm}\\
			t_0t_1t_2-t_2p^2-t_1q^2&=0.\label{s3:hand:det}
		\end{align}
		Moreover,
		\begin{equation}\label{s3:hand:Hidentities}
			H=t_0^2+p^2+q^2,\qquad t_1t_2=H-1,\qquad |t_i|\le1.
		\end{equation}
	\end{lemma}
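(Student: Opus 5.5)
The plan is to identify \(K\) as a rank-two operator built from a complex orthonormal frame, read off its spectrum, and obtain the scalar relations as trace and determinant identities.

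\emph{Spectrum.} By \eqref{rw:complex-frame}, \(z,w\) form an orthonormal pair in \(\C^3\). On \(\Span_{\C}\{z,w\}\) we have
\[
Kz=-i(z(w^*z)-w(z^*z))=iw,\qquad Kw=-i(z(w^*w)-w(z^*w))=-iz.
\]
So in the basis \((z,w)\), \(K\) acts by \(\left(\begin{smallmatrix}0&-i\\ i&0\end{smallmatrix}\right)\), with eigenvalues \(\pm1\). On the orthogonal complement \(K\) vanishes. Hence the spectrum is \(\{-1,0,1\}\).

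\emph{Invariants from \eqref{s3:hand:K}.} Comparing invariants of the spectrum with those of the explicit matrix gives:
\[
\tr K=0,\qquad \tr K^2=2,\qquad \det K=0.
\]
The trace gives \eqref{s3:hand:trace}. The squared Hilbert--Schmidt norm gives \(\sum t_i^2+2p^2+2q^2=2\), which is \eqref{s3:hand:norm}. For the determinant, expand along the first row and use that the \((1,2)\) entry vanishes:
\[
\det K=t_0t_1t_2-(-ip)(ip)t_2+(-iq)(0-t_1\,iq).
\]
Simplifying gives \(t_0t_1t_2-p^2t_2-q^2t_1\), which is \eqref{s3:hand:det}. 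The bound \(|t_i|\le1\) holds because diagonal entries of a Hermitian matrix lie between its extreme eigenvalues.

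\emph{The \(H\) identities.} Since \(K^3=K\) and \(K\) has rank two, \(K^2\) is the orthogonal projection onto \(\Span\{z,w\}\). Its \((0,0)\) entry is therefore \(|z_0|^2+|w_0|^2=|u_0|^2+|v_0|^2=H\). Computing the same entry from \eqref{s3:hand:K} gives \(t_0^2+p^2+q^2\), so \(H=t_0^2+p^2+q^2\).

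For \(t_1t_2\), I would compare the sum of principal \(2\times2\) minors, which equals \(e_2(-1,0,1)=-1\):
\[
(t_0t_1-p^2)+(t_0t_2-q^2)+t_1t_2=-1.
\]
The trace relation gives \(t_0(t_1+t_2)=-t_0^2\). Substituting yields \(t_1t_2=-1+t_0^2+p^2+q^2=H-1\).

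\emph{Main obstacle.} The only delicate step is the entrywise form \eqref{s3:hand:K}, specifically the sign conventions. I need to check that \(\omega(u_i,v_j)\) matches the \(n\)-component of \(u_i\times v_j\), and that the diagonal entry \((zw^*-wz^*)_{ii}=-2i\,\omega(u_i,v_i)\) gives \(K_{ii}=t_i\) after multiplying by \(-i\). I would verify these orientation signs before relying on the matrix entries; the rest is linear algebra.
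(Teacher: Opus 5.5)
Your proof is correct and takes essentially the paper's route. Both arguments use $Kz=iw$ and $Kw=-iz$ with $K=0$ on the complement, then $\tr K$, $\tr K^2$ and $\det K$ for the three identities, the $(0,0)$ entry of $K^2=zz^*+ww^*$ for $H$, and $|t_i|\le\|K\|=1$. The one cosmetic difference is that you obtain $t_1t_2=H-1$ from the sum of principal $2\times2$ minors, whereas the paper substitutes $H$ and $t_0=-(t_1+t_2)$ into the $\tr K^2$ identity; these are the same computation since $e_2=\tfrac12\bigl((\tr K)^2-\tr K^2\bigr)$. The sign conventions you flag in \eqref{s3:hand:K} are fixed in the text preceding the lemma, and they do check out.
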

	
	\begin{proof}
		Equation~\eqref{rw:complex-frame} gives \(Kz=iw\), \(Kw=-iz\),
		and \(K=0\) on the orthogonal complement of \(\Span_{\C}\{z,w\}\).
		Thus its eigenvalues are \(1,-1,0\), and
		\[
		K^2=zz^*+ww^*
		=\begin{pmatrix}
			t_0^2+p^2+q^2&-ip(t_0+t_1)&-iq(t_0+t_2)\\
			ip(t_0+t_1)&t_1^2+p^2&pq\\
			iq(t_0+t_2)&pq&t_2^2+q^2
		\end{pmatrix}.
		\]
		The trace of \(K\), the trace of \(K^2\), and the determinant of
		\(K\) give the first three identities.
		The entry numbered \(00\) of \(K^2=zz^*+ww^*\) gives the formula
		for \(H=|u_0|^2+|v_0|^2\).
		Using \(t_1+t_2=-t_0\), the trace identity becomes
		\[
		2=t_1^2+t_2^2-t_0^2+2H=-2t_1t_2+2H.
		\]
		Finally, \(|t_i|=|\langle Ke_i,e_i\rangle|\le\|K\|=1\), where
		\(e_0,e_1,e_2\) are the standard basis vectors of \(\C^3\).
	\end{proof}
	
	\subsection{Evaluation of the cubic coefficient}
	
	We retain the planar coordinates, area form \(\omega\), complex
	orthonormal frame \(z,w\), and scalar invariants \(p,q,t_i\)
	just defined. To exhibit the role of the real-coordinate weight,
	let
	\[
	S_\ell(r;u_0,u_1,u_2)=(\ell r;u_1,u_0-u_2,-u_1),
	\qquad \ell\in\R.
	\]
	Since \(r=s=0\), one has \(S_\ell x=S_3x\) and
	\(S_\ell y=S_3y\). Thus \(a,b,h,f_x,f_y\) in
	\eqref{s3:eq:coeffs} are unchanged when \(3\) is replaced by \(\ell\).
	Define
	\[
	\delta_\ell=-B(a,b)+\frac34B(S_\ell a,a)
	+\frac14B(S_\ell h,h)-B(S_\ell f_x,f_y).
	\]
	
	\begin{lemma}\label{s3:hand:deltaidentity}
		For every \((x,y)\in\mathcal A\) and every \(\ell\in\R\),
		\[
		\delta_\ell=4pq+\ell(4q^2+t_0t_1).
		\]
	\end{lemma}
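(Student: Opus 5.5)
Since \(r=s=0\) on \(\mathcal A\) by Lemma~\ref{s3:hand:scalar}, and by Lemma~\ref{s3:hand:physical} all six imaginary vectors lie in a common oriented plane with normal \(n\), the plan is to compute every bracket appearing in \(\delta_\ell\) explicitly in planar coordinates and then express the resulting scalars through \(p,q,t_i\). The key structural observation is that for two vectors \(a,b\) in the plane, \(a\times b=\omega(a,b)n\) is normal, while the real block of a bracket is \(2(u_2\cdot v_1-u_1\cdot v_2)\)-type dot products. So each of \(a,b,h,f_x,f_y\) will have imaginary blocks that are multiples of \(n\) (coming from cross products of planar vectors) together with a real \(m\)-component (coming from dot products); the terms \(rv_2-su_2\) in \eqref{s3:eq:bracket} drop out for \(x,y\) themselves but reappear for \(Sx\), \(Sy\) only through their real coordinates, which are \(\ell r=0\). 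Hence every bracket is of the form \((\rho;\,\alpha_0 n,\alpha_1 n,\alpha_2 n)\), and \(B\), \(S_\ell\) act on it by \(\rho\mapsto\ell\rho\) and \(T\) on \((\alpha_0,\alpha_1,\alpha_2)\).

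First I will write \(Sx=(0;u_1,u_0-u_2,-u_1)\), \(Sy=(0;v_1,v_0-v_2,-v_1)\) and record, via \eqref{s3:eq:bracket}, the real and normal components of \([Sx,y]\), \([x,Sy]\), \([Sx,Sy]\), \([Sx,x]\), \([Sy,y]\) as quadratic expressions in dot products \(u_i\cdot v_j\), \(u_i\cdot u_j\), \(v_i\cdot v_j\) and areas \(\omega(u_i,v_j)\), \(\omega(u_i,u_j)\), \(\omega(v_i,v_j)\). Then I will simplify with the constraints \eqref{s3:hand:cross}: \(\omega(u_i,v_j)=-\omega(u_j,v_i)\) for \(i\ne j\), \(\sum\omega(u_i,v_i)=0\), \(u_1\cdot v_2=u_2\cdot v_1\), and \(\zeta=0\). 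In particular the normal components of \(a\) should reduce to combinations of \(t_i\) (the 0-block of \(a\) is \(\zeta/2\cdot\)const, hence zero), and the real components to \(p,q\)-type dot products.

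The harder part is the terms involving \(f_x,f_y\) and \(b\), which contain \(u\)-\(u\) and \(v\)-\(v\) invariants (such as \(\omega(u_0,u_1)\) or \(u_0\cdot u_1\)) not obviously expressible via \(p,q,t_i\). Here I will use the complex frame \eqref{rw:complex-frame}: with \(z_i,w_i\), one has \(\omega(u_i,u_j)=\im(\bar z_iz_j)\), \(u_i\cdot u_j=\Rea(\bar z_iz_j)\), and products like \(B(f_x,f_y)\) become bilinear in \(zz^*\) and \(ww^*\). Combining products of these with the identity \(K^2=zz^*+ww^*\) and the relation \(Kz=iw\) from Lemma~\ref{rw:scalar-relations} should convert the \(u\)-\(u\), \(v\)-\(v\) pairings that occur in \(\delta_\ell\) (after using that \(\delta_\ell\) is invariant under rotating \((x,y)\), i.e. under \(z\mapsto e^{i\phi}z\)-type frame changes compatible with Lemma~\ref{s3:detail:H-invariance}) into entries of \(K\) and \(K^2\). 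I expect this conversion—showing that all non-\(K\) invariants cancel in \(\delta_\ell\)—to be the main obstacle; I would first try expanding in the rotation-invariant combinations \(\Rea,\im\) of \(\bar z_i w_j - \bar w_i z_j\) and \(\bar z_iz_j+\bar w_iw_j\), the latter being the entries of \(K^2\).

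Finally I will separate by powers of \(\ell\): the \(\ell\)-coefficient collects only the real \(m\)-components of \(a\), \(h\) and \(f_x,f_y\) (weighted \(\tfrac34,\tfrac14,-1\)), which I expect to give \(4q^2+t_0t_1\) after using \eqref{s3:hand:trace}; the \(\ell\)-free part collects \(-B(a,b)\) plus the \(T\)-weighted normal components, which should collapse to \(4pq\) using \eqref{s3:hand:trace}--\eqref{s3:hand:det}.
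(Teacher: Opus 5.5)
The overall plan is the paper's. You set $r=s=0$, use a common plane with normal $n$, write every bracket as $(\rho;\alpha_0n,\alpha_1n,\alpha_2n)$, and split $\delta_\ell$ into an $\ell$-coefficient from the real blocks and an $\ell$-free part from $-B(a,b)$ and the $T$-weighted normal blocks. But this is not yet a proof. The decisive step is left as an expectation, and the mechanism you expect for the $\ell$-coefficient would not work. That coefficient is
\[
\tfrac34a_r^2+\tfrac14h_r^2-(f_x)_r(f_y)_r=3q^2-4\bigl(Q_xQ_y-Q_{xy}^2\bigr),
\]
where $Q_x=-|u_1|^2-u_0\cdot u_2+|u_2|^2$, $Q_y$ is the same expression in the $v_i$, and $Q_{xy}=-u_1\cdot v_1+u_2\cdot v_2-\frac12(u_0\cdot v_2+u_2\cdot v_0)$. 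When the frame $(x,y)$ is rotated, the triple $(Q_x,Q_{xy},Q_y)$ transforms like a binary quadratic form while $K$ is unchanged. So these quantities are individually not functions of $p,q,t_i$, and the trace identity alone cannot produce $4q^2+t_0t_1$.

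The missing idea is to recognize a determinant. Write $Q_x=z^*Q_0z$, $Q_y=w^*Q_0w$, $Q_{xy}=\Rea(z^*Q_0w)$, where $Q_0$ is the fixed real symmetric matrix with $(Q_0)_{11}=-1$, $(Q_0)_{22}=1$, $(Q_0)_{02}=(Q_0)_{20}=-\frac12$ and all other entries zero. Then $Q_xQ_y-Q_{xy}^2=D_{\C}+(\im z^*Q_0w)^2$, with $D_{\C}$ the determinant of the compression of $Q_0$ to $\Span_{\C}\{z,w\}$. Completing $z,w$ by a unit vector $\nu$ and using the adjugate gives $D_{\C}=\nu^*\operatorname{adj}(Q_0)\nu=\tr\{\operatorname{adj}(Q_0)(I-K^2)\}$. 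The relation $F_{02}=0$ gives $\im(z^*Q_0w)=(t_1-t_2)/2$. Finishing then needs $p^2+q^2=H-t_0^2$ and $t_1t_2=H-1$, which come from $K^2=zz^*+ww^*$ and $\tr K^2=2$, not from the trace identity.

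Conversely, the $\ell$-free part needs none of the trace, norm, or determinant identities, only the linear constraints $F_{ij}=0$ and $u_1\cdot v_2=u_2\cdot v_1$. Contrary to your expectation, $b$ involves only mixed $u$--$v$ pairings. On $\mathcal A$ one has $a=(-2q;0,2t_2n,0)$ and $b=(2p;-t_1n,0,2t_1n)$, so $-B(a,b)=4pq$ exactly and the normal part of $\frac34B(S_\ell a,a)$ vanishes. The genuinely non-mixed terms are the products $\omega(u_i,u_j)\omega(v_k,v_l)$ in $-B(Tf_x,f_y)$. The paper converts these into products of mixed areas with the planar Pl\"ucker identity $\omega(a,b)\omega(c,d)=\omega(a,c)\omega(b,d)-\omega(a,d)\omega(b,c)$. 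After that, $F_{ij}=0$ makes them cancel identically against $\frac14B(Th,h)$.

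Your invariance heuristic can be made rigorous. Rotating $(x,y)$ acts on $(z,w)$ by a real rotation, not by $z\mapsto e^{i\phi}z$, and rotating the plane acts by a common phase. Together these are exactly the independent phase changes of $\psi_\pm=(z\pm iw)/\sqrt2$, so $\delta_\ell$ must be a polynomial in the entries of $K$ and $zz^*+ww^*$. But that only shows some formula in $p,q,t_i$ exists. Obtaining $4pq+\ell(4q^2+t_0t_1)$ needs explicit evaluations like the two above, and the proposal supplies neither.
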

	
	\begin{proof}
		We first compute the bracket contributions, and then evaluate the
		single determinant that remains.
		
		The planar versions of \eqref{s3:hand:cross} are
		\begin{equation}\label{s3:hand:omega-relations}
			\begin{gathered}
				\omega(u_i,v_j)+\omega(u_j,v_i)=0\quad(i\ne j),\\
				\sum_i\omega(u_i,v_i)=0,\qquad u_1\cdot v_2=u_2\cdot v_1.
			\end{gathered}
		\end{equation}
		Define the real quadratic expressions
		\[
		\begin{aligned}
			Q_x&=-|u_1|^2-u_0\cdot u_2+|u_2|^2,\\
			Q_y&=-|v_1|^2-v_0\cdot v_2+|v_2|^2,\\
			Q_{xy}&=-u_1\cdot v_1+u_2\cdot v_2
			-\frac12(u_0\cdot v_2+u_2\cdot v_0).
		\end{aligned}
		\]
		Substitution in \eqref{s3:eq:bracket}, using
		\eqref{s3:hand:omega-relations}, gives
		\begin{equation}\label{rw:planar-brackets}
			\begin{gathered}
				a=(-2q;0,2t_2n,0),\qquad b=(2p;-t_1n,0,2t_1n),\\
				h_r=4Q_{xy},\qquad h_0=0,\qquad
				h_1=4\omega(u_0,v_2)n,\\
				h_2=4\{\omega(u_0,v_1)+\omega(u_1,v_2)\}n,\\
				(f_x)_r=2Q_x,\qquad (f_x)_0=0,\qquad
				(f_x)_1=2\omega(u_0,u_2)n,\\
				(f_x)_2=2\{\omega(u_0,u_1)+\omega(u_1,u_2)\}n.
			\end{gathered}
		\end{equation}
		The expressions for \(f_y\) replace all \(u_i\) by \(v_i\).
		For example,
		\[
		\begin{aligned}
			\tfrac12[Sx,y]_r
			&=-u_1\cdot v_1-u_0\cdot v_2+u_2\cdot v_2,\\
			\tfrac12[x,Sy]_r
			&=u_2\cdot v_0-u_2\cdot v_2+u_1\cdot v_1,\\
			\tfrac12[Sx,y]_1
			&=\{\omega(u_1,v_1)+\omega(u_0,v_0)
			-\omega(u_2,v_0)\}n,\\
			\tfrac12[x,Sy]_1
			&=\{\omega(u_0,v_0)-\omega(u_0,v_2)
			+\omega(u_1,v_1)\}n.
		\end{aligned}
		\]
		Their sums and differences give \(a_r,h_r,a_1,h_1\) in
		\eqref{rw:planar-brackets}. The other blocks follow from the
		same formula \eqref{s3:eq:bracket}.
		
		The first two terms of \(\delta_\ell\) are now
		\[
		-B(a,b)=4pq,\qquad \frac34B(S_\ell a,a)=3\ell q^2.
		\]
		Indeed, \(a,b\) have disjoint nonzero imaginary blocks, and the
		imaginary blocks of \(S_\ell a\) are orthogonal to those of \(a\).
		
		The contributions of the imaginary-coordinate blocks to
		\(\frac14B(S_\ell h,h)-B(S_\ell f_x,f_y)\) add up to the real scalar
		\[
		\begin{aligned}
			&-8\omega(u_0,v_2)\{\omega(u_0,v_1)+\omega(u_1,v_2)\}\\
			&\quad+4\{\omega(u_0,u_1)+\omega(u_1,u_2)\}\omega(v_0,v_2)\\
			&\quad+4\omega(u_0,u_2)\{\omega(v_0,v_1)+\omega(v_1,v_2)\}.
		\end{aligned}
		\]
		This scalar vanishes. To verify the cancellation, use the planar identity
		\begin{equation}\label{s3:hand:plucker}
			\omega(a,b)\omega(c,d)
			=\omega(a,c)\omega(b,d)-\omega(a,d)\omega(b,c),
		\end{equation}
		which follows by expanding the four determinants. In particular,
		\[
		\begin{aligned}
			&\omega(u_0,u_1)\omega(v_0,v_2)
			+\omega(u_0,u_2)\omega(v_0,v_1)\\
			&\quad=\omega(u_0,v_0)\{\omega(u_1,v_2)+\omega(u_2,v_1)\}\\
			&\qquad-\omega(u_0,v_2)\omega(u_1,v_0)
			-\omega(u_0,v_1)\omega(u_2,v_0)\\
			&\quad=2\omega(u_0,v_2)\omega(u_0,v_1),
		\end{aligned}
		\]
		and
		\[
		\begin{aligned}
			&\omega(u_1,u_2)\omega(v_0,v_2)
			+\omega(u_0,u_2)\omega(v_1,v_2)\\
			&\quad=\{\omega(u_1,v_0)+\omega(u_0,v_1)\}\omega(u_2,v_2)\\
			&\qquad-\omega(u_1,v_2)\omega(u_2,v_0)
			-\omega(u_0,v_2)\omega(u_2,v_1)\\
			&\quad=2\omega(u_0,v_2)\omega(u_1,v_2).
		\end{aligned}
		\]
		The last equalities use \eqref{s3:hand:omega-relations}.
		Their sum proves the claimed cancellation.
		
		The real-coordinate contribution is
		\(4\ell Q_{xy}^2-4\ell Q_xQ_y\).
		Writing \(D_Q=Q_xQ_y-Q_{xy}^2\), we have proved
		\begin{equation}\label{s3:hand:deltadet}
			\delta_\ell=4pq+3\ell q^2-4\ell D_Q.
		\end{equation}
		
		It remains to evaluate \(D_Q\) from the scalar identities of
		Lemma~\ref{rw:scalar-relations}. Let
		\[
		Q_0=\begin{pmatrix}
			0&0&-1/2\\0&-1&0\\-1/2&0&1
		\end{pmatrix}.
		\]
		This fixed real symmetric \(3\times3\) matrix is distinct from the
		metric operator \(Q_\varepsilon\).
		The definitions give
		\[
		z^*Q_0z=Q_x,\qquad w^*Q_0w=Q_y,\qquad
		\Rea(z^*Q_0w)=Q_{xy}.
		\]
		Let \(D_{\C}\) be the determinant of the Hermitian compression of
		\(Q_0\) to the complex plane spanned by \(z,w\). Then
		\[
		D_{\C}
		=\det\begin{pmatrix}z^*Q_0z&z^*Q_0w\\w^*Q_0z&w^*Q_0w\end{pmatrix}
		=Q_xQ_y-|z^*Q_0w|^2,
		\]
		and therefore
		\begin{equation}\label{s3:hand:real-complex-det}
			D_Q=D_{\C}+\bigl(\im(z^*Q_0w)\bigr)^2.
		\end{equation}
		
		We compute the compression determinant through its complementary
		one-dimensional space. Complete \(z,w\) to a complex orthonormal
		basis \(z,w,\nu\), and put \(U=(z,w,\nu)\).
		The adjugate of a matrix is the transpose of its cofactor matrix;
		for an invertible matrix \(A\), it is \(\det(A)A^{-1}\).
		Since \(\det Q_0=1/4\), one has
		\[
		\operatorname{adj}(U^*Q_0U)=U^*\operatorname{adj}(Q_0)U.
		\]
		The last diagonal cofactor of \(U^*Q_0U\) is precisely \(D_{\C}\).
		Also \(\nu\nu^*=I-zz^*-ww^*=I-K^2\). Hence
		\begin{equation}\label{s3:hand:cofactor}
			D_{\C}=\nu^*\operatorname{adj}(Q_0)\nu
			=\tr\{\operatorname{adj}(Q_0)(I-K^2)\}.
		\end{equation}
		Taking minors, or multiplying by \(Q_0\), gives
		\[
		\operatorname{adj}(Q_0)
		=\begin{pmatrix}
			-1&0&-1/2\\0&-1/4&0\\-1/2&0&0
		\end{pmatrix},
		\qquad Q_0\operatorname{adj}(Q_0)=\frac14I.
		\]
		The entries numbered \(02\) and \(20\) of \(K^2\) are opposite
		purely imaginary numbers, and their contributions to the trace cancel.
		The two remaining diagonal terms give
		\[
		D_{\C}=-(1-H)-\frac14(1-t_1^2-p^2).
		\]
		
		Finally,
		\[
		z^*Q_0w=-\frac12\overline z_0w_2-\overline z_1w_1
		-\frac12\overline z_2w_0+\overline z_2w_2.
		\]
		Since \(\im(\overline z_iw_j)=\omega(u_i,v_j)\),
		the relation \(F_{02}=0\) and \(t_i=-2\omega(u_i,v_i)\) give
		\[
		\im(z^*Q_0w)
		=-\frac12\{\omega(u_0,v_2)+\omega(u_2,v_0)\}
		-\omega(u_1,v_1)+\omega(u_2,v_2)
		=\frac{t_1-t_2}{2}.
		\]
		Thus \eqref{s3:hand:real-complex-det} becomes
		\[
		4D_Q=-4(1-H)-1+t_1^2+p^2+(t_1-t_2)^2.
		\]
		Using successively \(p^2+q^2=H-t_0^2\),
		\(t_0=-t_1-t_2\), and \(t_1t_2=H-1\), we obtain
		\[
		\begin{aligned}
			4D_Q+q^2+t_0t_1
			&=-5+5H+2t_1^2-2t_1t_2+t_2^2-t_0^2+t_0t_1\\
			&=5(H-1-t_1t_2)=0.
		\end{aligned}
		\]
		Consequently \(4D_Q=-q^2-t_0t_1\).
		Substitution into \eqref{s3:hand:deltadet} proves the lemma.
	\end{proof}
	
	\subsection{A positive lower bound}
	
	The scalar reduction is now complete. Recall that
	\(p,q,t_0,t_1,t_2\) are the real entries of the Hermitian matrix
	\eqref{s3:hand:K}, and that \(H\in[1/4,1/2]\).
	We use only the identities of Lemma~\ref{rw:scalar-relations}
	and the formula of Lemma~\ref{s3:hand:deltaidentity}.
	
	\begin{proposition}\label{s3:hand:positive}
		Every pair \((x,y)\in\mathcal A\) satisfies
		\[
		\delta_3(x,y)\ge\frac{\sqrt5-1}{4}.
		\]
	\end{proposition}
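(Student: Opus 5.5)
The plan is to reduce Proposition~\ref{s3:hand:positive} to an explicit minimization over a few real parameters, using only Lemma~\ref{s3:hand:deltaidentity} with \(\ell=3\) and the identities of Lemma~\ref{rw:scalar-relations}. Lemma~\ref{s3:hand:deltaidentity} gives
\[
\delta_3=4pq+12q^2+3t_0t_1 .
\]
First I would eliminate \(t_0\). From \(t_0=-t_1-t_2\) and \(t_1t_2=H-1\) we get \(t_0t_1=-t_1^2+1-H\). Hence
\[
\delta_3=4pq+12q^2+3(1-H)-3t_1^2 .
\]
Since \(H\le\frac12\), the term \(3(1-H)\) is at least \(\frac32\). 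The difficulty is therefore to control the negative terms \(-3t_1^2\) and \(4pq\).

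Next I would use the determinant identity \eqref{s3:hand:det} to solve for \(p^2\) and \(q^2\). From \eqref{s3:hand:Hidentities}, \(p^2+q^2=H-t_0^2\). Together with \(t_2p^2+t_1q^2=t_0t_1t_2\), this is a \(2\times2\) linear system in \((p^2,q^2)\). Because \(t_1t_2=H-1<0\), the numbers \(t_1\) and \(t_2\) have opposite signs, so \(t_1\ne t_2\) and the system is uniquely solvable:
\[
q^2=\frac{t_0t_1t_2-t_2(H-t_0^2)}{t_1-t_2},
\qquad
p^2=\frac{t_1(H-t_0^2)-t_0t_1t_2}{t_1-t_2}.
\]
Then I would parametrize by \(H\in[\frac14,\frac12]\) and by \(t_1\); note \(t_2=(H-1)/t_1\), and \(|t_i|\le1\) forces \(|t_1|\in[1-H,1]\). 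In the worst case \(4pq\ge-4|p||q|\). This turns \(\delta_3\) into an explicit function \(\Phi(H,t_1)\), defined on the region where the formulas for \(p^2,q^2\) are nonnegative.

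The key step, and the one I expect to be the main obstacle, is showing \(\Phi\ge\frac{\sqrt5-1}{4}\) on this region. The constant \(\frac{\sqrt5-1}{4}\) suggests a golden-ratio quadratic, so the extremum should sit at an interior critical point or on the boundary \(H=\frac12\). I would first try to write \(12q^2-4|p||q|-3t_1^2+3(1-H)\) as a positive-definite quadratic form in \((|p|,|q|)\) plus a remainder depending only on \(H,t_1\). Concretely, I would complete the square, \(12q^2-4|p||q|=12\bigl(|q|-\tfrac{|p|}{6}\bigr)^2-\tfrac{p^2}{3}\). I would then substitute \(p^2\) from the linear solve, reducing everything to a rational function of \(t_1\) for fixed \(H\).

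That rational function would be minimized by clearing denominators, monotonicity in \(H\) should push the minimum to an endpoint of \([\frac14,\frac12]\), and at that endpoint a one-variable polynomial inequality in \(t_1\) should have the root giving \(\frac{\sqrt5-1}{4}\). If completing the square is too lossy, I would fall back to Lagrange multipliers directly on the constraint system \eqref{s3:hand:trace}--\eqref{s3:hand:det}. Finally I would check that \(\frac{\sqrt5-1}{4}>\frac14\), which holds since \(\sqrt5>2\).
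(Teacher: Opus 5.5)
Your preliminary reductions are correct: $\delta_3=4pq+12q^2+3t_0t_1$, $t_0t_1=1-H-t_1^2$, the linear solve for $(p^2,q^2)$, and the parametrization by $(H,t_1)$ with $1-H\le|t_1|\le1$. But the proposition \emph{is} the inequality you defer as ``the key step''. The proposal only predicts what a minimization should show, so it has a genuine gap.

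The one concrete device you give fails. Completing the square and substituting only $p^2$ leaves the bound $3(1-H)-3t_1^2-p^2/3$, and this is negative on admissible data. Take $H=\tfrac{1}{2}$, $t_1=1$, $t_0=t_2=-\tfrac{1}{2}$, $p=0$, $q^2=\tfrac{1}{4}$. This satisfies every relation in Lemma~\ref{rw:scalar-relations}. Your bound there equals $-\tfrac{3}{2}$, while in fact $\delta_3=\tfrac{3}{2}$. All of the positivity sits in the discarded square $12(|q|-|p|/6)^2$. If you keep that square, $|p||q|$ returns and you no longer have a rational function of $t_1$.

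The underlying problem is that eliminating $t_0t_1$ via $t_1t_2=H-1$ hides the sign: the isolated term $3(1-H)-3t_1^2$ goes down to $-\tfrac{3}{2}$. Your heuristic is also off. The constant $(\sqrt5-1)/4$ is not an extremal value; it is the output of lossy estimates. The true minimum over your region is larger, numerically about $0.48$ and attained at $H=\tfrac{1}{4}$ rather than $H=\tfrac{1}{2}$. A search for a golden-ratio critical point will therefore find nothing.

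The missing idea is to eliminate $t_0t_1$ with the determinant identity \eqref{s3:hand:det} instead. Since $t_1t_2=H-1<0$, dividing by $t_2$ gives $t_0t_1=p^2-\alpha q^2$ with $\alpha=t_1^2/(1-H)\in(0,2]$, because $|t_1|\le1$ and $1-H\ge\tfrac{1}{2}$. Hence
\[
\delta_3=3p^2+4pq+3(4-\alpha)q^2=2(p^2+q^2)+(p+2q)^2+3(2-\alpha)q^2\ge 2L,\qquad L=p^2+q^2.
\]
The same identity bounds $t_0$: $|t_0|=|t_2p^2+t_1q^2|/(1-H)\le 2L$. Then $\tfrac{1}{4}\le H=t_0^2+L\le 4L^2+L$ gives $L\ge(\sqrt5-1)/8$. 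This is where $H\ge\tfrac{1}{4}$ enters and where the golden ratio comes from.

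Alternatively, $|t_0|\le H$ follows from $t_1t_2=H-1$ and $|t_i|\le1$. That gives $L\ge H(1-H)\ge\tfrac{3}{16}$ and $\delta_3\ge\tfrac{3}{8}$, which confirms that the stated constant is not sharp.
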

	
	\begin{proof}
		Since \(t_1t_2=H-1\le-1/2\), both \(t_1,t_2\) are nonzero.
		Define
		\[
		\alpha=-\frac{t_1}{t_2}=\frac{t_1^2}{1-H}.
		\]
		The inequalities \(|t_1|\le1\) and \(1-H\ge1/2\) give
		\(0<\alpha\le2\).
		Dividing \eqref{s3:hand:det} by \(t_2\) gives
		\[
		t_0t_1=p^2-\alpha q^2.
		\]
		The cubic identity therefore becomes
		\begin{equation}\label{rw:positive-squares}
			\begin{aligned}
				\delta_3
				&=3p^2+4pq+3(4-\alpha)q^2\\
				&=2(p^2+q^2)+(p+2q)^2+3(2-\alpha)q^2\\
				&\ge2(p^2+q^2).
			\end{aligned}
		\end{equation}
		
		Set \(L=p^2+q^2\ge0\).
		The determinant identity, \(|t_1|,|t_2|\le1\), and
		\(1-H\ge1/2\) also give
		\[
		|t_0|
		=\frac{|t_2p^2+t_1q^2|}{1-H}
		\le2L.
		\]
		Now the lower bound for \(H\) is used:
		\[
		\frac14\le H=t_0^2+L\le4L^2+L.
		\]
		The nonnegative solution of this quadratic inequality satisfies
		\[
		L\ge\frac{\sqrt5-1}{8}.
		\]
		Combining this with \eqref{rw:positive-squares} proves the result.
	\end{proof}
	
	\begin{proof}[Proof of Theorem~\ref{s3:thm:critical}]
		Let \((g,x,y)\in\mathcal Z\).
		It is a horizontal orthonormal commuting triple and also satisfies
		\(\zeta(x,y)=0\).
		Proposition~\ref{rw:moment-bound} gives \(1/4\le H\le1/2\),
		so \((x,y)\in\mathcal A\).
		Proposition~\ref{s3:hand:positive} therefore yields the asserted
		bound on \(\delta_3\).
	\end{proof}
	
	\section{One parameter interval for all planes}
	\label{rw:completion}
	
	We combine the two estimates stated in Section~\ref{sec:roadmap}.
	The space \(\mathcal I\) consists of all undeformed horizontal
	\(B\)-orthonormal frames together with their group points.
	The continuous functions
	\[
	c(g,x,y)=[x,y],\qquad
	\zeta(g,x,y)=D^*([S_3x,y]+[x,S_3y]),\qquad
	\delta_3(g,x,y)
	\]
	are defined by \eqref{s3:eq:coeffs}--\eqref{s3:eq:delta};
	their values happen not to depend on \(g\).
	Let \(a_0,C_0,\varepsilon_1>0\) be the constants from
	Proposition~\ref{rw:uniform-proposition}, including the bound
	\(|\delta_3|\le C_0\). Thus
	\[
	N_\varepsilon\ge
	a_0(|c|^2+\varepsilon^2|\zeta|^2)
	+\varepsilon^3\delta_3-C_0\varepsilon^4.
	\]
	
	Put
	\[
	m=\frac14,\qquad F=|c|^2+|\zeta|^2,\qquad
	\mathcal K=\{\xi\in\mathcal I:\delta_3(\xi)\le m/2\}.
	\]
	Here \(\xi\) denotes a triple \((g,x,y)\).
	The set \(\mathcal K\) is compact.
	Theorem~\ref{s3:thm:critical} gives
	\(\delta_3\ge(\sqrt5-1)/4>m\) on
	\(F^{-1}(0)=\mathcal Z\), so \(\mathcal K\cap F^{-1}(0)\)
	is empty. If \(\mathcal K\ne\varnothing\), it follows that
	\[
	\kappa=\min_{\mathcal K}F>0.
	\]
	Indeed, a zero minimum would be attained at a point of
	\(\mathcal K\cap F^{-1}(0)\).
	If \(\mathcal K=\varnothing\), set \(\kappa=1\).
	Define
	\begin{equation}\label{s3:eq:epsilon0}
		\varepsilon_0=
		\min\left\{\varepsilon_1,1,\frac{m}{4C_0},
		\frac{a_0\kappa}{4C_0}\right\}>0.
	\end{equation}
	
	For \(\xi\in\mathcal I\setminus\mathcal K\) and
	\(0<\varepsilon\le\varepsilon_0\),
	\[
	N_\varepsilon(\xi)
	\ge\frac m2\varepsilon^3-C_0\varepsilon^4
	\ge\frac m4\varepsilon^3>0.
	\]
	For \(\xi\in\mathcal K\), the inequalities \(\varepsilon\le1\),
	\(F\ge\kappa\), and \(|\delta_3|\le C_0\) give
	\[
	\begin{aligned}
		N_\varepsilon(\xi)
		&\ge a_0\varepsilon^2F-C_0\varepsilon^3-C_0\varepsilon^4\\
		&\ge\varepsilon^2(a_0\kappa-2C_0\varepsilon)\\
		&\ge\frac12a_0\kappa\varepsilon^2>0.
	\end{aligned}
	\]
	These regions cover \(\mathcal I\), proving positivity of every
	numerator on the same parameter interval.
	
	For completeness, the Gram matrix of \(Xg=Q_\varepsilon xg\),
	\(Yg=Q_\varepsilon yg\) is
	\[
	\begin{pmatrix}
		B(Q_\varepsilon x,x)&B(Q_\varepsilon x,y)\\
		B(Q_\varepsilon x,y)&B(Q_\varepsilon y,y)
	\end{pmatrix}.
	\]
	Because \(x,y\) are \(B\)-orthonormal and \(Q_\varepsilon\ge I/2\),
	this matrix is at least \(I_2/2\); its determinant is therefore at
	least \(1/4\). The submersion preserves this Gram matrix on horizontal
	vectors. Dividing by its positive determinant proves positivity of
	sectional curvature.
	
	\begin{theorem}\label{s3:thm:positive-final}
		Let \(\Sigma=\Sp(2)/\Sp(1)\) be the Gromoll--Meyer quotient for
		the action
		\[
		k\star g=\diag(k,k)g\diag(k,1)^{-1}.
		\]
		The metric
		\[
		\widetilde g_{\varepsilon,g}(Ug,Vg)
		=B\bigl((I+\varepsilon S_3)^{-1}U,V\bigr),
		\qquad
		S_3(r;u_0,u_1,u_2)=(3r;u_1,u_0-u_2,-u_1),
		\]
		induces a smooth quotient metric \(g_\varepsilon\) satisfying
		\(\sec(g_\varepsilon)>0\) for every
		\(0<\varepsilon\le\varepsilon_0\), with \(\varepsilon_0\) as in
		\eqref{s3:eq:epsilon0}.
	\end{theorem}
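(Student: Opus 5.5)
The statement of Theorem~\ref{s3:thm:positive-final} is essentially a restatement of the assembly just carried out, so the plan is to package the preceding section into a proof. It splits into three steps.

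\textbf{Step 1: smoothness of the quotient metric.} For \(0<\varepsilon\le\varepsilon_0\le\varepsilon_*\) the operator \(P_\varepsilon=(I+\varepsilon S_3)^{-1}\) is positive definite by \eqref{rw:metric-comparison}, so \(\widetilde g_\varepsilon\) is a right-invariant Riemannian metric. The operator \(S_3\) has the block form \eqref{pp} with \(T\otimes I_3\), hence commutes with \(\Ad_{D(q)}\). Consequently the metric is left-\(\diag(q,q)\)-invariant as well as right-invariant, so the free action \(\star\) is isometric. The quotient metric \(g_\varepsilon\) is therefore smooth and \(\pi\) is a Riemannian submersion.

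\textbf{Step 2: reduction to the numerator on \(\mathcal I\).} Take any point \(p\in\Sigma\) and any plane \(\sigma\subset T_p\Sigma\). Choose \(g\in\pi^{-1}(p)\) and let \(\Pi\subset\mathfrak g\) be the right-translated horizontal lift of \(\sigma\). By \eqref{rw:horizontal}, \(P_\varepsilon\Pi\subset\ker J_g^*\). Choose a \(B\)-orthonormal basis \(x,y\) of \(P_\varepsilon\Pi\); then \((g,x,y)\in\mathcal I\), and \(Q_\varepsilon x\), \(Q_\varepsilon y\) span \(\Pi\). Hence \(\sec_{g_\varepsilon}(\sigma)=N_\varepsilon(g,x,y)/\det G\), where \(G\) is the Gram matrix, which \(d\pi_g\) preserves on horizontal vectors. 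Since \(x,y\) are \(B\)-orthonormal and \(Q_\varepsilon\ge I/2\), we have \(G\ge I_2/2\), so \(\det G\ge 1/4>0\).

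\textbf{Step 3: positivity of \(N_\varepsilon\) on all of \(\mathcal I\).} Invoke Proposition~\ref{rw:uniform-proposition}, giving constants \(a_0,C_0,\varepsilon_1\), together with Theorem~\ref{s3:thm:critical}. Cover \(\mathcal I\) by two regions.
\begin{itemize}
\item On \(\mathcal I\setminus\mathcal K\), where \(\delta_3>1/8\), the cubic term dominates:
\[
N_\varepsilon\ge \tfrac18\varepsilon^3-C_0\varepsilon^4>0
\]
for \(\varepsilon\le\varepsilon_0\).
\item On the compact set \(\mathcal K\), the function \(F=|c|^2+|\zeta|^2\) is continuous. It has no zero on \(\mathcal K\), because on \(\mathcal Z=F^{-1}(0)\) we have \(\delta_3\ge(\sqrt5-1)/4>1/8\). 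Thus \(F\ge\kappa>0\) on \(\mathcal K\), and the quadratic term \(a_0\varepsilon^2\kappa\) beats \(C_0(\varepsilon^3+\varepsilon^4)\) for \(\varepsilon\le\varepsilon_0\).
\end{itemize}

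The only point needing care is the compactness step. The functions \(c\), \(\zeta\) and \(\delta_3\) are polynomial in \((x,y)\), and \(\mathcal I\) is compact, so \(\mathcal K\) is closed in a compact set and the minimum \(\kappa\) is attained. It is positive exactly because \(\mathcal K\cap\mathcal Z=\varnothing\), which is where the critical inequality of Theorem~\ref{s3:thm:critical} enters. Everything else is the arithmetic already displayed around \eqref{s3:eq:epsilon0}.
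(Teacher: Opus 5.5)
Your proposal is correct and follows the paper's own assembly essentially step for step: every horizontal plane is represented by a triple in $\mathcal I$ with Gram determinant at least $1/4$, and $\mathcal I$ is split into $\mathcal K=\{\delta_3\le 1/8\}$ and its complement. On $\mathcal K$, compactness and the critical bound $\delta_3\ge(\sqrt5-1)/4$ on $\mathcal Z$ give $\min_{\mathcal K}(|c|^2+|\zeta|^2)=\kappa>0$, so the quadratic term wins; on the complement, the cubic term absorbs $C_0\varepsilon^4$. The only detail you leave implicit is the paper's convention $\kappa=1$ when $\mathcal K=\varnothing$, which is needed only to make $\varepsilon_0$ in \eqref{s3:eq:epsilon0} well defined.
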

	
	\begin{proof}
		Positive definiteness and descent were established in
		Section~\ref{sec}.
		Every deformed horizontal plane is represented by a triple in
		\(\mathcal I\), as proved following \eqref{rw:moving-frame}.
		The preceding argument gives a positive numerator and positive area
		denominator for all such triples on the interval
		\eqref{s3:eq:epsilon0}. This proves the theorem and completes the proof
		of Theorem~\ref{thm:main}.
	\end{proof}
	
	The number \(\kappa\) is obtained by compactness. Its positivity
	suffices for the existence of the common parameter interval; a
	numerical endpoint would require a quantitative lower bound for
	\(F\) on \(\mathcal K\).
	
	\bibliography{exotic-sphere}

\end{document}